\newcounter{copynumber}
\newtheorem{thm}[equation]{Theorem}
\newtheorem{thmcopy}[copynumber]{Theorem}
\newtheorem{prop}[equation]{Proposition}
\newtheorem{propcopy}[copynumber]{Proposition}
\newtheorem{lem}[equation]{Lemma}
\newtheorem{cor}[equation]{Corollary}
\newtheorem{corcopy}[copynumber]{Corollary}
\newtheorem{conj}[equation]{Conjecture}
\newcommand{\fullcref}[2]{\cref{#1}(\ref{#1-#2})}
\crefname{thm}{theorem}{theorems}
\crefname{prop}{proposition}{propositions}
\crefname{lem}{lemma}{lemmas}
\crefname{cor}{corollary}{corollaries}
\crefname{conj}{conjecture}{conjectures}
\Crefname{thm}{Theorem}{Theorems}
\Crefname{prop}{Proposition}{Propositions}
\Crefname{lem}{Lemma}{Lemmas}
\Crefname{cor}{Corollary}{Corollaries}
\Crefname{conj}{Conjecture}{Conjectures}
\theoremstyle{definition}
\newtheorem{definition}[equation]{Definition}
\newtheorem{example}[equation]{Example}
\newtheorem{notation}[equation]{Notation}
\newtheorem{Note}[equation]{Note}
\newtheorem{remark}[equation]{Remark}
\newtheorem{Ques}[equation]{Question}
\newcounter{case}
\newcounter{startcases}
\numberwithin{case}{startcases}
\newenvironment{case}[1][\unskip]{\refstepcounter{case}\bf
	\medskip \noindent Case \thecase\ #1. \it}{\unskip\upshape}
\renewcommand{\thecase}{\Roman{case}}
\newcommand{\noprelistbreak}{\@nobreaktrue\nopagebreak\vskip4pt} 
\newenvironment{copyresult}{\thmrefer}{}
\newcommand{\thmrefer}[1]{\expandafter\ifx\csname r@#1\endcsname\relax
	\renewcommand{\thecopynumber}{??}%
	\protect\G@refundefinedtrue%
	\nfss@text{\reset@font\bfseries ??}%
	\@latex@warning{Reference `#1' on page \thepage \space undefined}%
	\else \cref@getlabel{#1}{\dtemp} \renewcommand{\thecopynumber}{\dtemp} \fi
}
\numberwithin{equation}{section}
\newcommand{\R}{\mathbb{R}}  
\newcommand{\Z}{\mathbb{Z}}  
\DeclareMathOperator{\Area}{Area}
\newcommand{\cartprod}{\mathbin{\Box}}
\newcommand{\cay}{\mathop{\mathsf{Cay}}}
\newcommand{\cayd}{\mathop{\mathsf{C}\mkern-1mu\overrightarrow{\mathsf{ay}}}}
\begin{document}
	
	\title[Arc-disjoint hamiltonian paths in Cartesian products]{Arc-disjoint hamiltonian paths in \\ Cartesian products of directed cycles}
	
	\author{Iren Darijani}
	\email{i.darijani@mun.ca}
	
	\author{Babak Miraftab}
	\email{bobby.miraftab@uleth.ca}
	
	\author{Dave Witte Morris}
	\email{dmorris@deductivepress.ca}
	
	\address{Department of Mathematics and Computer Science, University of Lethbridge, 4401 University
		Drive, Lethbridge, Alberta, T1K 3M4, Canada}
	
	\begin{abstract}
		We show that if $C_1$ and~$C_2$ are directed cycles (of length at least two), then the Cartesian product $C_1 \cartprod C_2$ has two arc-disjoint hamiltonian paths. (This answers a question asked by J.\,A.\,Gallian in 1985.) The same conclusion also holds for the Cartesian product of any four or more directed cycles (of length at least two), but some cases remain open for the Cartesian product of three directed cycles.
		
		We also discuss the existence of arc-disjoint hamiltonian paths in $2$-generated Cayley digraphs on (finite or infinite) abelian groups.
	\end{abstract}
	
	\maketitle
	
	\tableofcontents
	
	\section{Introduction} \label{IntroSect}
	
	It is easy to see (and well known) that the Cartesian product of any two directed cycles has a hamiltonian path. (See \cref{CartProdDefn} for the definition of the Cartesian product.) In 1985, J.\,A.\,Gallian (personal communication) asked whether there are two arc-disjoint hamiltonian paths. The main result of this paper establishes that the answer is ``yes'':
	
	\begin{copyresult}{main}
		\begin{thmcopy} \label{maincopy}
			If $C_1$ and~$C_2$ are directed cycles \textup(of length $\ge 2$\textup), then the Cartesian product $C_1 \cartprod C_2$ has two arc-disjoint hamiltonian paths.
		\end{thmcopy}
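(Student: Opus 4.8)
The plan is to work in the group-theoretic model $C_1 \cartprod C_2 = \cayd(\Z_m \times \Z_n; (1,0),(0,1))$, where $m,n \ge 2$ are the cycle lengths. I will call an arc of the form $v \to v+(1,0)$ an \emph{$x$-arc} and one of the form $v \to v+(0,1)$ a \emph{$y$-arc}, and picture the $mn$ vertices as an $m \times n$ torus. Each vertex has in-degree and out-degree $2$, so there are $2mn$ arcs in all, and two arc-disjoint hamiltonian paths use $2(mn-1)$ of them, leaving exactly two arcs unused. First I would record this bookkeeping, since it forces each path to be roughly balanced between $x$-arcs and $y$-arcs and rules out the naive ``snake'' paths that are almost entirely $y$-arcs.

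The main step is to produce two arc-disjoint spanning $1$-regular subdigraphs (unions of directed cycles) whose arc sets partition all $2mn$ arcs. I would obtain these from a single $2$-colouring of the arcs that is balanced at every vertex: demanding that the two out-arcs at each vertex receive different colours, and likewise the two in-arcs, forces the colour of the $x$-arc at $v$ to depend only on the coset of $v$ modulo $H = \langle (1,-1)\rangle$, with the $y$-arc taking the opposite colour. Since $|H| = \lcm(m,n)$, there are exactly $\gcd(m,n)$ such cosets, and every assignment of colours to them yields a valid pair of arc-disjoint $2$-factors, red and blue, together covering every arc. A short computation then shows that, following red arcs, one advances by a fixed generator through the $\gcd(m,n)$ cosets and returns with net displacement $(p,q)$, where $p$ and $q$ count the red and blue cosets; the number of red cycles, and symmetrically blue, equals $\lcm(m,n)$ divided by the order of this displacement in $H$. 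When some colouring makes both red and blue a single hamiltonian cycle (a gcd/winding condition on $(p,q)$), deleting one arc from each finishes the proof immediately.

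In general red and blue each break into several cycles, and the heart of the argument is to splice the cycles of each colour into one hamiltonian path. I would do this by local recolourings---alternating red/blue swaps that reroute one factor at a vertex it shares with another cycle---each merging two cycles of one colour while keeping both subdigraphs $1$-regular; the two arcs that the count permits me to leave unused are then spent to open the final cycles into paths. The hard part is guaranteeing that enough merges are available to connect all the cycles, i.e.\ that the adjacency graph of the red cycles (and of the blue cycles) is connected, and handling the few exceptional small or highly symmetric configurations. This is delicate precisely when there are many cycles---chiefly the regimes where $\gcd(m,n)$ is small (most sharply when $m,n$ are coprime, and when $m,n$ are both odd, where no order-two translation is available to generate the second path as a translate of the first). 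I therefore expect the splicing and connectivity analysis, together with a handful of directly verified base cases such as $m=2$, to be the main obstacle; it is presumably also what leaves the analogue for three or more factors only partially resolved.
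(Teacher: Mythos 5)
Your setup is sound and in fact matches the paper's underlying framework: the observation that a balanced $2$-colouring of the arcs must be constant on cosets of $\langle(1,-1)\rangle$ is exactly the arc-forcing argument, the count of $\gcd(m,n)$ cosets is right, and your ``both colours hamiltonian'' case is precisely Keating's criterion (\cref{DisjointHC}). But the step you defer --- splicing the several cycles of one colour class into a single hamiltonian path by ``local recolourings'' --- is not a technical loose end; it is where the approach breaks. Within the $2$-factor framework there are no local moves: the in- and out-degree constraints force any valid recolouring to flip an entire coset of $\langle(1,-1)\rangle$, which merely replaces $(p,q)$ by another pair summing to $\gcd(m,n)$. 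When no such pair satisfies Keating's condition --- most starkly when $\gcd(m,n)=1$, where $a+b=1$ with $a,b\ge 1$ is impossible and $\vec{\mathsf C}_m \cartprod \vec{\mathsf C}_n$ has no hamiltonian cycle at all --- every admissible colouring leaves each colour class disconnected, and spending your two spare arcs only opens one cycle of each colour into a path without merging it with the others. The missing idea is that a hamiltonian \emph{path} is allowed one ``mixed'' coset (the terminal coset, in which some vertices travel by one generator and some by the other), and it is exactly this relaxation that produces hamiltonian paths in the cases where hamiltonian cycles do not exist.

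Exploiting that relaxation is the actual content of the paper's proof, and it is quantitatively delicate. The paper parametrizes spanning quasi-paths by the number $t$ of unmixed cosets travelling by $b$ and the number $d$ of terminal-coset vertices travelling by $b$, invokes the Curran--Witte classification (\cref{HPiff}) of which pairs $(t,d)$ yield hamiltonian paths in terms of primitive lattice points in a triangle, and then shows (\cref{string} and \cref{dgapCartProd}) that the achievable values of $\delta_b$ are spread densely enough to find two hamiltonian paths $P$ and~$P'$ with $\delta_b(P)+\delta_b(P') \in \{mn,\, mn-1,\, mn-2\}$; \cref{makedisjoint} and \cref{samedelta} then convert this numerical complementarity into genuine arc-disjointness. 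None of this structure is visible from the $2$-factor picture, and I do not see how your splicing step could be completed without reconstructing something equivalent to it.
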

	\end{copyresult}
	
	In fact, if the lengths of the directed cycles are large, then there are \emph{many} pairs of arc-disjoint hamiltonian paths:
	
	\begin{copyresult}{ManyInCxC}
		\begin{propcopy}
			Let $N(m,n)$ be the number of \textup(unordered\textup) pairs $\{P, P'\}$ of arc-disjoint hamiltonian paths in the Cartesian product of a directed cycle of length~$m$ and a directed cycle of length~$n$. If $m$ and~$n$ are sufficiently large, then
			\[ N(m,n) > \frac{m^2 \, n^2}{10} . \]
		\end{propcopy}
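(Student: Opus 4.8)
The plan is to start from the explicit pair of arc-disjoint hamiltonian paths produced in the proof of the preceding theorem and to multiply it up, both by the symmetries of the Cartesian product and by the freedom inherent in the construction. Write the digraph on vertex set $\Z_m \times \Z_n$, with arcs $(i,j)\to(i+1,j)$ and $(i,j)\to(i,j+1)$, so there are $2mn$ arcs and a hamiltonian path uses $mn-1$ of them; a pair of arc-disjoint hamiltonian paths therefore leaves only two arcs unused, so the second path is nearly forced to be a hamiltonian path inside the complement of the first. The first source of many pairs is the translation group: the maps $(i,j)\mapsto(i+a,j+b)$ form a subgroup of $\operatorname{Aut}$ of the product of order $mn$, and each of them carries an arc-disjoint pair to another arc-disjoint pair. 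Thus every pair lies in a translation-orbit whose size is $mn$ divided by the order of its stabilizer, and for large $m,n$ I expect a generic pair to have trivial stabilizer, contributing a full factor of $mn$.

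To reach $m^2n^2/10$ I would supply a second factor of order $mn$ by exhibiting that many pairwise translation-inequivalent ``seed'' pairs. Here I would revisit the construction behind the theorem and isolate its free parameters: a construction of this kind typically chooses, for each row (or each column), a place at which the path ``turns'' from a horizontal run into a vertical step, together with a relative offset between the two paths that can be varied while the disjointness and the single-path conditions are both maintained. Varying one such parameter over a range of size proportional to $mn$ (for instance a relative shift of the two paths, or the position of the turns along a diagonal) should yield $\Theta(mn)$ seeds, and I would then check that seeds differing in this parameter are not related by any translation, so that their orbits are genuinely disjoint.

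The main obstacle will be the bookkeeping that converts these two linear factors into the stated quadratic bound with the explicit constant $1/10$. Two separate coincidences must be ruled out or bounded: first, the overcounting coming from the unordered nature of $\{P,P'\}$ (and from any additional symmetry, such as the transpose that exchanges the roles of the two directed cycles), which contributes only a bounded multiplicative loss; and second, the possibility that a seed has a nontrivial translation stabilizer, or that two different (seed, translation) combinations produce the same unordered pair of paths. For large $m$ and $n$ these coincidences should affect only a negligible fraction of the constructed pairs, so after dividing by the bounded overcounting constant the count is at least $(1-o(1))\,m^2n^2/k$ for some fixed $k<10$, which exceeds $m^2n^2/10$ once $m$ and $n$ are large enough. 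Verifying the distinctness claims carefully—rather than the asymptotics themselves—is where I expect the real work to lie.
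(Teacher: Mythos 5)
Your high-level strategy --- produce many translation-inequivalent ``seed'' pairs and multiply by the $mn$ translations --- is the same as the paper's, but the entire difficulty lies in producing $\Theta(mn)$ inequivalent seeds, and that is exactly the step you leave as a hope. Since translations preserve the number $\delta_b(P)$ of arcs of each type, translation-inequivalent pairs must be distinguished by the value of $\delta_b$, so you need a positive proportion of all $mn$ candidate values of $\delta_b$ to be realized by a hamiltonian path that admits an arc-disjoint partner. The construction behind \cref{main} (via \cref{string} and \cref{dgapCartProd}) only supplies a window of width $O(n)$ of such values, so ``varying the free parameters of the construction,'' as you describe, plausibly yields $O(n)$ seeds and a bound of order $mn^2$, not $m^2n^2$; in any case you give no mechanism that produces $\Theta(mn)$ distinct values of $\delta_b$.

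The paper gets these seeds from the full Curran--Witte classification: the hamiltonian paths $H_t(d)$ are indexed by the primitive lattice points of the triangle $T(m,n,e)$ of area $mn/2$ (\cref{HPiff}, \cref{NumHP}), so by \cref{TriangleAsympCount} the set $R$ of realized values of $\delta_b$ has density about $3/\pi^2 \approx 0.304$ in $\{0,\dots,mn-1\}$. An arc-disjoint pair requires $\delta_b(P)+\delta_b(P')\in\{|G|,|G|-1,|G|-2\}$ (\cref{iflessthanone}), so one must intersect $R$ with roughly its reflection $|G|-1-R$; a set of density $0.304<1/2$ need not meet its reflection at all. The key trick is that consecutive elements of $R$ differ by at least $2$, so the thickened sets $R\cup(R+1)$ and $(|G|-1-R)\cup(|G|-R)$ each have density $6/\pi^2>1/2$ and therefore intersect in more than $0.2\,|G|$ values, each of which yields a seed pair via \cref{iflessthanone}. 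This density-plus-thickening argument is the missing idea; without it (or a substitute) your outline cannot reach the quadratic bound. Your concerns about stabilizers and overcounting are legitimate but are dispatched more simply in the paper: if $P'$ were a translate of $P$ then $\delta_b(P)=\delta_b(P')$, which pins $\delta_b(P)$ to within $1$ of $|G|/2$, so only a bounded number of seeds near that value need to be discarded.
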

	\end{copyresult}
	
	Although the \lcnamecref{main} only considers the Cartesian product of precisely two directed cycles, it implies that arc-disjoint hamiltonian paths exist in the Cartesian product of any larger number of directed cycles, except three:
	
	\begin{copyresult}{HamPathsInC1xC2xC3xC4}
		\begin{corcopy}
			If $C_1,C_2,\ldots,C_r$ are directed cycles \textup(of length $\ge 2$\textup), and $r \geq 4$, then the Cartesian product $C_1 \cartprod C_2 \cartprod \cdots \cartprod C_r$ has two arc-disjoint hamiltonian paths.
		\end{corcopy}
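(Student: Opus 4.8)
The plan is to deduce the result from the main theorem by exploiting the associativity of the Cartesian product together with an inductive ``combination'' argument. Writing $D \cartprod E$ for a product of two digraphs, I would first isolate the key tool: a lemma asserting that if $D$ and $E$ each admit two arc-disjoint hamiltonian paths, then so does $D \cartprod E$. Granting such a lemma, the corollary for \emph{even} $r$ is immediate: group the factors as $(C_1 \cartprod C_2) \cartprod (C_3 \cartprod C_4) \cartprod \cdots \cartprod (C_{r-1}\cartprod C_r)$, note that each parenthesized block has two arc-disjoint hamiltonian paths by the main theorem, and combine the blocks two at a time.

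For the combination lemma itself, I would build the two paths in $D\cartprod E$ by a directed analogue of the boustrophedon (snake) construction, viewing $D \cartprod E$ as $|V(E)|$ layers, each a copy of $D$, joined along fibers that are copies of $E$. The essential difficulty, absent in the undirected setting, is that arcs cannot be traversed backwards, so a layer cannot simply be swept ``right-to-left'' on alternate passes. To get around this I would strengthen the inductive hypothesis to \emph{reverse-compatible} pairs: two arc-disjoint hamiltonian paths $P_1\colon a \to b$ and $P_2 \colon b \to a$ with interchanged endpoints. Given such a pair in $D$ and a hamiltonian path sequencing the layers, the first path of $D \cartprod E$ sweeps layer $0$ by $P_1$ from $a$ to $b$, steps up one fiber arc, sweeps layer $1$ by $P_2$ from $b$ back to $a$, steps up, and so on, the alternation of $P_1$ and $P_2$ exactly accommodating the fixed arc directions. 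The second path is built symmetrically, sweeping \emph{columns} (copies of $E$) using a reverse-compatible pair in $E$. The point of carrying reverse-compatibility as an invariant is that the lemma can then be iterated, so I would also check that the construction can be arranged to output a reverse-compatible pair.

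The genuinely delicate part, and the step I expect to be the main obstacle, is arranging that the two output paths are \emph{arc-disjoint}. The first path consumes all the $D$-arcs of $P_1 \cup P_2$ together with the fiber arcs used at its layer-transitions, while the symmetric second path consumes all the $E$-arcs of the corresponding pair together with its column-transition arcs; the bulk usages (the $D$-arcs of one path versus the $E$-arcs of the other) are automatically of different types and do not collide, but the transition arcs of each path are of the ``wrong'' type and must be routed to dodge the other path's bulk. Managing this bookkeeping—choosing the sequencing paths and the transition levels so that the few cross-type arcs of one path land on arcs the other path leaves free—is where the real work lies, and it is presumably why the statement is delicate.

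Finally, for \emph{odd} $r \ge 5$ the simple pairing fails, since an odd number of cycles cannot be split into two blocks that are each products of at least two cycles without creating a three-fold block, and a single directed cycle of length $\ell \ge 3$ has no two arc-disjoint hamiltonian paths (it has only $\ell$ arcs, whereas two such paths would require $2(\ell-1)$). I would therefore reduce every odd $r \ge 7$ to the base case $r = 5$ by splitting off pairs, and treat $r = 5$ separately as $(C_1 \cartprod C_2 \cartprod C_3 \cartprod C_4) \cartprod C_5$: here the four-fold product already carries more structure than a mere pair of arc-disjoint paths (being itself built from two combined blocks), and I would exploit that extra structure to fuse in the final cycle $C_5$. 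Pinning down precisely which extra feature of the four-fold product makes this fusion possible—and why the analogous feature is unavailable for a two-fold product, which is exactly what leaves the three-fold case $r = 3$ open—is the crux that separates $r \ge 4$ from $r = 3$.
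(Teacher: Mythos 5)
There is a genuine gap, and the route itself is far more difficult than necessary. Your entire plan rests on an unproved ``combination lemma'' (two arc-disjoint hamiltonian paths in $D$ and in $E$ yield two in $D \cartprod E$), and you yourself flag the arc-disjointness bookkeeping as ``where the real work lies'' without resolving it. Worse, the strengthened invariant you propose to carry through the induction --- a \emph{reverse-compatible} pair $P_1\colon a\to b$, $P_2\colon b\to a$ of arc-disjoint hamiltonian paths --- already fails at the base case. For example, $\vec{\mathsf C}_2 \cartprod \vec{\mathsf C}_3$ has no hamiltonian cycle at all (by \cref{HCiff}, since $\gcd(2,3)=1$ cannot be written as $a+b$ with $a,b\ge 1$), so for any two arc-disjoint hamiltonian paths there is no arc from the terminal vertex $\tau$ of one to its initial vertex $\iota$, and \cref{iota=tau} then forces the second path to start at $\tau+a$ or $\tau+b$ rather than at $\tau$; endpoint-interchanged pairs simply do not exist there. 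Your treatment of odd $r$ is likewise left unresolved: the $r=5$ case is deferred to an unspecified ``extra feature'' of the four-fold product. So the proposal does not constitute a proof.

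The ingredient you are missing is \cref{HamCycInC1xC2xC3} (Curran--Witte): the Cartesian product of \emph{three or more} directed cycles always has a hamiltonian cycle. With that in hand the corollary is a two-line argument, which is the paper's proof: for $r\ge 4$, take a hamiltonian cycle $C$ in $C_1 \cartprod \cdots \cartprod C_{r-1}$ (here $r-1\ge 3$); then $C \cartprod C_r$ is a spanning subdigraph of the full product that is itself a Cartesian product of two directed cycles, so \cref{main} gives two arc-disjoint hamiltonian paths in it, and these remain hamiltonian paths in the full product. This also explains why $r=3$ is the hard case --- there one would need a hamiltonian cycle in a product of only \emph{two} directed cycles, which need not exist --- whereas your closing diagnosis attributes the difficulty to the wrong mechanism. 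No pairing of factors, no boustrophedon construction, and no parity split between even and odd $r$ is needed.
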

	\end{copyresult}
	
	Thus, $r = 3$ is the only open case of the following \lcnamecref{ConjHamPathsInC1xC2xC3}.
	
	\begin{conj} \label{ConjHamPathsInC1xC2xC3}
		If $C_1,C_2,\ldots,C_r$ are directed cycles \textup(of length $\ge 2$\textup), and $r \geq 2$, then the Cartesian product $C_1 \cartprod C_2 \cartprod \cdots \cartprod C_r$ has two arc-disjoint hamiltonian paths.
	\end{conj}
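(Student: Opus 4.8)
The cases $r = 2$ and $r \ge 4$ are already settled (by the main theorem and its corollary, respectively), so the substance of the conjecture is the case $r = 3$; I will aim for a single inductive argument that handles all $r \ge 2$ at once, with $r = 3$ as the first genuinely new instance. View each product $C_1 \cartprod \cdots \cartprod C_r$ as the Cayley digraph of the abelian group $G = \Z_{n_1} \times \cdots \times \Z_{n_r}$ with the standard generators $e_1, \ldots, e_r$, so that a hamiltonian path is an ordering of $G$ in which consecutive elements differ by some $e_i$. For such a path $P$ from $s$ to $t$, call $t - s \in G$ its \emph{displacement}. The plan is to prove, by induction on $r$, the following strengthening: $C_1 \cartprod \cdots \cartprod C_r$ carries a \emph{balanced pair}, by which I mean two arc-disjoint hamiltonian paths $P, P'$ whose displacements are equal and whose initial vertices are distinct.

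The inductive step is a layering construction. Write $D = C_1 \cartprod \cdots \cartprod C_{r-1}$ and suppose $D$ has a balanced pair $P, P'$ of common displacement $d$ and distinct starts $s_P \ne s_{P'}$. Regard $C_1 \cartprod \cdots \cartprod C_r$ as $n_r$ disjoint copies (``layers'') of $D$, indexed by the $C_r$-coordinate $k \in \{0, \ldots, n_r - 1\}$ and joined by the arcs of the $C_r$-direction. In layer $k$, let the first target traverse the translate $P + kd$ and the second traverse $P' + kd$, and join layer $k$ to layer $k+1$ by the single $C_r$-arc out of the terminal vertex of the path just completed. Since the terminal vertex of $P + kd$ is $t_P + kd = s_P + (k+1)d$, this arc lands exactly at the initial vertex of $P + (k+1)d$ in the next layer, so the stitching is consistent and each target visits every vertex of every layer; hence both are hamiltonian paths.

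Arc-disjointness is then essentially free. Within layer $k$ the two targets use the translates-by-$kd$ of $\mathrm{arcs}(P)$ and $\mathrm{arcs}(P')$; since $P, P'$ are arc-disjoint in $D$ and both are shifted by the \emph{same} element $kd$, they stay disjoint, and arcs in different layers cannot coincide. The joining arcs all lie in the $C_r$-direction, so they avoid every layer-internal arc; and the two targets leave layer $k$ at $t_P + kd$ and $t_{P'} + kd$, which differ because $t_P - t_{P'} = s_P - s_{P'} \ne 0$, so their joining arcs are distinct. Finally the construction reproduces its own hypothesis one dimension up: both targets start in layer $0$ at the distinct vertices $s_P, s_{P'}$, and a short computation gives each the common displacement $(n_r\, d,\ n_r - 1) \in G$. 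Thus $C_1 \cartprod \cdots \cartprod C_r$ again has a balanced pair, and the induction runs for every $r \ge 2$.

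This reduces the whole conjecture to its base case: the existence of a \emph{balanced} pair in $C_1 \cartprod C_2$. This, and not the lifting, is where I expect the real difficulty, and it is plausibly why $r = 3$ remains open: the main theorem only asserts that \emph{some} arc-disjoint pair exists, whereas here the two paths must in addition have equal displacement (equivalently, equal numbers of $e_1$-arcs modulo $n_1$ and of $e_2$-arcs modulo $n_2$) and distinct starts. The plan is to revisit the explicit staircase constructions behind the main theorem and try to select the pair so that the two paths use matching generator-counts—forcing equal displacement—while shifting one path to separate the start vertices; an arc-count bookkeeping shows such a balance is numerically feasible for every $n_1, n_2 \ge 2$ (for instance $C_2 \cartprod C_2$ already admits the balanced pair $00\,10\,11\,01$ and $10\,00\,01\,11$, of common displacement $(0,1)$). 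The hard part will be realizing this \emph{combinatorially and uniformly}: guaranteeing that an arc-disjoint pair with exactly balanced counts and distinct starts exists for all cycle lengths, and disposing of the small or exceptional cases (small $n_i$, or awkward divisibility relations between $n_1$ and $n_2$, governed by $\lcm(n_1,n_2)$) by direct construction. Should the balanced base case genuinely fail for some $(n_1, n_2)$, the fallback is to engineer a workable pair via rotations of the base paths that are not pure translations, at the price of a more delicate within-layer disjointness check; pinning this down is where the genuine obstruction to closing $r = 3$ resides.
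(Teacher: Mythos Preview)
This statement is labelled a \emph{conjecture} in the paper and is explicitly left open there for $r=3$; there is no proof in the paper to compare against. What the paper does contain is \cref{C1C2C3}, which settles many three-cycle cases by an \emph{alternating} layering based on the identity $\iota+\tau=\iota'+\tau'$ of \cref{iota+tau}. Your proposal is therefore a program, and the honest question is whether the program can be carried out.

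Your inductive step is fine: if a balanced pair (arc-disjoint, equal displacement, distinct starts) exists in $C_1\cartprod\cdots\cartprod C_{r-1}$, then your non-alternating lamination produces a balanced pair one dimension up. The genuine gap is the base case, and it is not merely ``hard'' --- the strengthened hypothesis is \emph{false} for some two-cycle products. Take $\vec{\mathsf C}_3\cartprod\vec{\mathsf C}_5$, so $G=\Z_3\times\Z_5$ and $o(e_1-e_2)=15=|G|$. Equal displacement forces $\delta_{e_2}(P)\equiv\delta_{e_2}(P')\pmod{15}$, hence $\delta_{e_2}(P)=\delta_{e_2}(P')$; arc-disjointness forces $\delta_{e_2}(P)+\delta_{e_2}(P')\in\{13,14,15\}$ (see \cref{MustBe<1}), so $\delta_{e_2}(P)=7$. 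But the machinery of \cref{HPiff} gives the admissible values $\delta_{e_2}\in\{4,6,8,10,12\}$ here, and $7$ is not among them. Thus $\vec{\mathsf C}_3\cartprod\vec{\mathsf C}_5$ has \emph{no} balanced pair, and your induction cannot start. (More generally, whenever $\gcd(n_1,n_2)=1$ one has $o(e_1-e_2)=|G|$, and then a balanced pair would require $\delta_{e_2}(P)=\delta_{e_2}(P')=(|G|-1)/2$, which for odd $|G|$ conflicts with the fact that all admissible $\delta_{e_2}$ have the parity of $n-1$ by \cref{parity}.)

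It is worth contrasting this with the paper's partial approach. For its alternating lamination the paper needs not ``equal displacement'' $\tau-\iota=\tau'-\iota'$ but rather $\tau-\iota'=\tau'-\iota$ (equivalently $\iota+\tau=\iota'+\tau'$), and \cref{iota+tau} shows this \emph{automatically} holds for every arc-disjoint pair when $|G|$ is even and the index of $\langle a-b\rangle$ is odd. That is exactly why \fullcref{C1C2C3}{parity} goes through while your scheme stalls: the condition the paper imposes is a consequence of arc-disjointness in the relevant cases, whereas your equal-displacement condition is an extra constraint that can be infeasible. Your suggested fallback (non-translate rotations in each layer) would have to confront precisely the cases the paper leaves open --- three cycles of coprime odd lengths --- and nothing in the proposal indicates how to overcome the parity obstruction above.
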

	
	Although the \lcnamecref{ConjHamPathsInC1xC2xC3} has not been proved for all Cartesian products of three directed cycles, we know that it is true in most of these cases:
	
	\begin{copyresult}{C1C2C3}
		\begin{propcopy}
			Assume $C_1$, $C_2$, and~$C_3$ are directed cycles \textup(of length~$\ge 2$\textup). If either
			\begin{enumerate}
				\item
				the Cartesian product of two of the directed cycles has a hamiltonian cycle,
				or
				\item
				the lengths of the three directed cycles do not all have the same parity \textup(i.e., if there is a directed cycle of even length and a directed cycle of odd length\textup),
				or
				\item
				at least one of the directed cycles has length~$2$,
			\end{enumerate}
			then $C_1 \cartprod C_2 \cartprod C_3$ has two arc-disjoint hamiltonian paths.
		\end{propcopy}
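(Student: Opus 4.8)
The unifying strategy is to reduce each of the three cases to the main theorem on products of two directed cycles, either directly (case~(1)) or through a lifting argument (cases~(2) and~(3)).

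Case~(1) is the most transparent. After relabelling, suppose $C_1\cartprod C_2$ has a hamiltonian cycle $H$, and set $N=|V(C_1)|\,|V(C_2)|$, so that $H$ is a directed cycle of length $N$ carried by the vertex set of $C_1\cartprod C_2$. Since every arc of $H$ is an arc of $C_1\cartprod C_2$, the product $H\cartprod C_3$ is a spanning subdigraph of $C_1\cartprod C_2\cartprod C_3$ that is isomorphic to the Cartesian product of the two directed cycles $H$ and $C_3$. The main theorem then provides two arc-disjoint hamiltonian paths of $H\cartprod C_3$, and these are at once arc-disjoint hamiltonian paths of the whole digraph.

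For (2) and (3) a hamiltonian cycle inside a single pair may be unavailable: none of the three pairs inside the product of directed cycles of lengths $2,3,5$ bounds a hamiltonian cycle, yet this product is covered by both (2) and (3). Here I would argue by lifting. Write the product as $A\cartprod C$, where $A=C_i\cartprod C_j$ is the product of two of the directed cycles and $C=C_k$ is the third, of length $\ell$; regard $A$ as a Cayley digraph on $G=\mathbb{Z}_m\times\mathbb{Z}_n$ and, using the main theorem, fix two arc-disjoint hamiltonian paths $P,P'$ of $A$. For a hamiltonian path $Q$ of $A$ from $u$ to $v$ call $v-u\in G$ its \emph{displacement}; translating $Q$ by $g\in G$ gives a hamiltonian path from $u+g$ to $v+g$ with the same displacement. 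Viewing $A\cartprod C$ as $\ell$ layers $A_0,\dots,A_{\ell-1}$ joined by the arcs of $C$, I lift $P$ to a hamiltonian path $\widetilde P$ running forward through the layers: in layer $k$ it follows the translate of $P$ that begins at the vertex just reached, and it steps from layer $k$ to layer $k+1$ along one arc of $C$. Lifting $P'$ the same way gives $\widetilde P'$. The construction succeeds once $P$ and $P'$ are chosen to have the same displacement $\delta$: then the translate of $P$ used in layer $k$ and the translate of $P'$ used in layer $k$ differ by a shift independent of $k$, and taking that shift to be $0$ makes every layer carry a common translate of the arc-disjoint pair $\{P,P'\}$, so no two within-layer arcs coincide, while the $\ell-1$ connecting arcs of $\widetilde P$ and $\widetilde P'$ stay disjoint because $P$ and $P'$ start at different vertices. (Were the displacements different, the relative shift would run through $\ell$ distinct elements of $G$ and arc-disjointness would fail in general --- exactly the phenomenon that leaves the three-factor conjecture open.) Everything therefore rests on producing, in a suitable two-factor product $A$, two arc-disjoint hamiltonian paths of equal displacement, and this is where (2) and (3) enter: each lets me arrange that $A$ has even order --- under (2) by pairing a cycle of even length with another cycle, under (3) by pairing the cycle of length $2$ with another cycle.

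The main obstacle is precisely this equal-displacement lemma for an even-order product $C_i\cartprod C_j$. The difficulty is a tight count: any two arc-disjoint hamiltonian paths together use all but at most two of the arcs in each of the two cycle directions, and equal displacement essentially forces them to use the same number of arcs in each direction. The common number of arcs in the second direction used by each path must then be one of at most two admissible values --- and there really are two when $n_i n_j$ is even, but only one when it is odd. The extra freedom in the even case is what I would exploit to build the pair; its absence in the all-odd case (for instance lengths $3,5,7$, where no pairing makes $A$ even and no pair is hamiltonian) is what leaves the conjecture open there. Concretely I would give a uniform template --- as in the $2\times 3$ product, where both paths use every arc of the first direction and exactly four of the six arcs of the second, with a common displacement --- and then verify arc-disjointness one direction at a time. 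Granting the lemma, the lift completes (2) and (3), and together with case~(1) the proposition follows.
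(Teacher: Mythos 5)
Your Case~(1) is exactly the paper's argument, and your layered (``laminated'') construction for the remaining cases is also the right general shape. The problem is the lemma you rest it on. You reduce (2) and~(3) to an ``equal-displacement lemma'': that any even-order product $C_i \cartprod C_j$ contains two arc-disjoint hamiltonian paths with the same displacement $\tau-\iota$. You do not prove this, and it is false. Equal displacement forces the two paths to have the same value of~$d$ (the number of terminal-coset vertices travelling by the second generator), since $\tau - \iota = -a - d(a-b)$; combined with the arc-disjointness constraint $\delta_b(P)+\delta_b(P') \in \{|G|-2,|G|-1,|G|\}$, this pins $d$ to within one of $o(a-b)/2$. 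For $\vec{\mathsf C}_4 \cartprod \vec{\mathsf C}_5$ (even order, no hamiltonian cycle, so a configuration your case~(2) must handle) the hamiltonian paths are exactly the $H_0(d)$ with $d \in \{4,6,8,12,14,16\}$, and the required values $d \in \{9,10\}$ are missing, so no arc-disjoint pair with equal displacement exists. Your heuristic about ``two admissible values when $n_i n_j$ is even'' does not survive this example, and you give no argument that some other even-order pairing always succeeds.

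The paper avoids this by using a weaker, ``crossed'' compatibility condition instead of equal displacement: it shows (\cref{iota+tau}) that whenever $|G|$ is even, $|G:\langle a-b\rangle|$ is odd, and there is no hamiltonian cycle, then \emph{every} pair of arc-disjoint hamiltonian paths automatically satisfies $\iota+\tau = \iota'+\tau'$, i.e.\ $\tau - \iota' = \tau' - \iota$. This is proved from the parity behaviour of the Curran--Witte parameters $u_t(k)$ (\cref{parity}), not by exhibiting a special pair. With this condition the end of a translate of $P$ lines up with the start of a translate of $P'$ (and vice versa), so the two laminated paths \emph{alternate} between translates of $P$ and of $P'$ from layer to layer, rather than each using translates of a single path throughout; within each layer the two paths still form a common translate of the arc-disjoint pair $\{P,P'\}$, so arc-disjointness is preserved. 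Choosing the two factors of opposite parity makes $|G|=mn$ even and $|G:\langle a-b\rangle| = \gcd(m,n)$ odd, which is precisely where the hypothesis of~(2) enters; case~(3) then reduces to (1) or~(2) exactly as you suggest. To repair your write-up you would need to replace the equal-displacement lemma by this cross condition (or prove some substitute), since the statement you rely on is not merely unproved but false.
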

	\end{copyresult}
	
	\begin{Ques}
		Does there exist a function $f(r) \to \infty$, such that every Cartesian product of~$r$ directed cycles has at least $f(r)$ pairwise arc-disjoint hamiltonian paths?
	\end{Ques}
	
	It is well known that any Cartesian product of two directed cycles is isomorphic to a $2$-generated Cayley digraph on an abelian group. (See \cref{CayDigDefn,CxCIsCay}.) It is natural to expect that \cref{maincopy} can be extended to this setting:
	
	\begin{conj} \label{ArcDisjointCayleyConj}
		If  $\{a,b\}$ is a $2$-element generating set of a finite abelian group~$G$, and $a$ and~$b$ are nontrivial, then $\cayd(G; a,b)$ has two arc-disjoint hamiltonian paths.
	\end{conj}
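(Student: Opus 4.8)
The plan is to set up the standard lattice model for $2$-generated Cayley digraphs and then to isolate the one genuinely new phenomenon---the \emph{twist}---that is absent from the Cartesian-product case proved above. Writing $a = e_1$ and $b = e_2$, the surjection $\Z^2 \to G$, $e_1 \mapsto a$, $e_2 \mapsto b$, identifies $\cayd(G;a,b)$ with the quotient $\cayd(\Z^2/\Lambda;\, e_1, e_2)$, where $\Lambda$ is a finite-index sublattice of~$\Z^2$ of index $|G|$. Putting $\Lambda$ into Hermite normal form gives a basis $(s,0)$, $(t,\ell)$ with $s,\ell > 0$ and $0 \le t < s$, so that $G$ is an $\ell$-fold stack of directed $a$-cycles of length~$s$ in which $\ell$ consecutive $b$-steps return to the bottom cycle, but shifted by $-t$ in the $a$-direction (the relation $\ell\, b = -t\,a$). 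When $t=0$ the lattice is rectangular: if $s,\ell\ge 2$ the digraph is exactly the Cartesian product $C_s \cartprod C_\ell$ handled above, while $\ell = 1$ gives a cyclic group and a circulant $\cayd(\Z_s;\,1,c)$ to be treated separately. The real content of the conjecture is thus the twisted case $t \neq 0$.

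For the construction itself I would exploit vertex-transitivity via a translation trick. If $P$ is any hamiltonian path, let $A, B \subseteq G$ be the sets of tails of its $a$-arcs and $b$-arcs, respectively, so that $A \sqcup B = G \setminus \{\text{end of } P\}$. For any $g \in G$ the translate $g + P$ is again a hamiltonian path, and its $a$-arcs (resp.\ $b$-arcs) have tails $A + g$ (resp.\ $B + g$); since $a \neq b$, an $a$-arc can never coincide with a $b$-arc, so $P$ and $g + P$ are arc-disjoint precisely when $A \cap (A+g) = \varnothing$ and $B \cap (B+g) = \varnothing$. It therefore suffices to produce a single hamiltonian path $P$ together with an element $g$ for which both translated tail-sets avoid the originals. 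The strategy is to build $P$ by traversing the $a$-cycles in a controlled ``brick-wall'' pattern, using $b$-steps to pass between levels, so that $A$ and $B$ are structured enough---for instance unions of arithmetic progressions---to guarantee that such a~$g$ exists.

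The hard part will be the twist. In the rectangular case the two cyclic directions are independent, so the gcd and parity conditions that force a traversal to sweep out all of~$G$ (rather than closing up prematurely into several shorter cycles) decouple cleanly, and a half-period shift $g$ separates $A$ from $A+g$ and $B$ from $B+g$ by symmetry. When $t \neq 0$ the wrap-around in the $b$-direction feeds back into the $a$-coordinate, coupling these conditions: ensuring that the chosen arc set is \emph{connected} (a single hamiltonian path, not a union of cycles) and simultaneously \emph{arc-disjoint} from its translate must now be arranged together, and the admissible shifts~$g$ are themselves constrained by the twisted relation $\ell\, b = -t\,a$. I expect the analysis to split according to number-theoretic data of the triple $(s,\ell,t)$---in particular whether a hamiltonian cycle exists, in which case one may instead seek a near-decomposition into two arc-disjoint hamiltonian cycles and delete one arc from each---mirroring the case division by existence of a hamiltonian cycle, by parity, and by a factor of length~$2$ already seen for three cycles. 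Controlling all of the residual twisted cases uniformly is precisely where the conjecture remains open.
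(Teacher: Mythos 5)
This statement is \cref{ArcDisjointCayleyConj} of the paper: it is an \emph{open conjecture}, not a theorem. The paper proves it only in special cases --- when the arc-forcing subgroup $\langle a-b\rangle$ has even index in~$G$ (\cref{EvenIndex}), and when $|G:\langle a\rangle|\ge 600$ or $|G:\langle b\rangle|\ge 600$ (\cref{LargeIndex}) --- supplemented by computer verification for $|G|\le 10^4$. Your proposal is, by its own admission, also not a proof: it ends by conceding that the twisted cases remain open. So there is no complete argument to check; what can be assessed is whether your strategy is a viable route to the open cases, and there it has a concrete structural problem beyond the admitted gap.

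The central device you propose --- taking $P'=g+P$ to be a translate of~$P$ and choosing $g$ so that the tail-sets separate --- is provably far more restrictive than what is needed, and the claim that a ``half-period shift works by symmetry'' in the rectangular case is both unsubstantiated and not how \cref{main} is actually proved. If $P'$ is any spanning quasi-path arc-disjoint from~$P$, then $|\delta_b(P)-\delta_a(P')|\le 1$ (Remark~\ref{MustBe<1}); for a translate one has $\delta_a(P')=\delta_a(P)=|G|-1-\delta_b(P)$, so your plan requires a single hamiltonian path with $\delta_b(P)$ within~$1$ of $(|G|-1)/2$. By \cref{HPiff} the achievable values of $\delta_b$ over hamiltonian paths form the set of numbers $t\,o(a-b)+u_t(k)$, which has gaps (of size $2h_t(k)$), and nothing guarantees that it meets a window of width~$2$; worse, \cref{iota=tau} shows that when $\cayd(G;a,b)$ has no hamiltonian cycle there are at most two hamiltonian paths arc-disjoint from a given~$P$, with initial vertex forced into $\{\tau+a,\tau+b\}$, so the shift~$g$ cannot be ``chosen'' freely after $P$ is built. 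The paper's mechanism (\cref{makedisjoint}, \cref{samedelta}, \cref{iflessthanone}) is strictly more flexible: it needs only two hamiltonian paths $P,P'$ --- not translates of one another --- with $\delta_b(P)+\delta_b(P')$ within~$2$ of $|G|$, which it then rearranges into an arc-disjoint pair; for $C_1\cartprod C_2$ such a pair comes from \cref{string} and \cref{dgap}, not from a shift symmetry. Even this weaker requirement is precisely what cannot yet be verified in the residual cases, which \cref{ReduceToCyclic} reduces to the circulants $\cayd(\Z_k;a,a+1)$ and $\cayd(\Z_k;-a,a+1)$ --- a normalization in the same spirit as, but not identical to, your Hermite form. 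If you want to make progress, the place to work is on the distribution of the values $u_t(k)$ for these two families, not on translation-invariant constructions.
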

	
	We have some evidence that this is true:
	
	\begin{prop}
		\Cref{ArcDisjointCayleyConj} is true in all cases where either:
		\begin{enumerate}
			\item $|G : \langle a-b \rangle|$ is even \textup(see \cref{EvenIndex}\textup),
			or
			\item $|G : \langle a \rangle| > 600$
			\textup(see \cref{LargeIndex}\textup).
		\end{enumerate}
	\end{prop}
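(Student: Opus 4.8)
The plan is to handle the two cases by separate arguments, both of which must go genuinely beyond \cref{maincopy}. Indeed, $\cayd(G;a,b)$ is a Cartesian product of two directed cycles precisely when $\langle a\rangle\cap\langle b\rangle=\{0\}$, i.e.\ when $G=\langle a\rangle\oplus\langle b\rangle$, and this ``direct sum'' condition is invariant under automorphisms of~$G$; in general $\cayd(G;a,b)$ is a \emph{twisted} discrete torus (a quotient of the square grid by a sublattice that need not be rectangular), so no change of generators reduces it to the product case. In both parts I would first fix coordinates coming from a cyclic quotient of~$G$, and then build the two paths from the resulting layered structure.

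For part~(1), set $c=a-b$ and $H=\langle c\rangle$. Since $a-b\in H$, the generators $a$ and~$b$ have the same image in $G/H$, which is therefore cyclic; write $d=|G:H|$ (even, by hypothesis) and index the cosets of~$H$ by $\Z_d$. The key observation is that \emph{every} arc—whether of type $a$ or type $b$—advances the coset index by~$1$, so the digraph is a ``circular staircase'': from a vertex in layer~$j$, the $b$-arc steps to layer $j+1$ fixing the $H$-coordinate while the $a$-arc steps to layer $j+1$ and also advances the $H$-coordinate by~$1$. I would produce the two arc-disjoint paths from a single arc $2$-colouring: at each vertex assign its $a$-arc to one path and its $b$-arc to the other, by a rule that alternates with the parity of the layer index~$j$. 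Evenness of~$d$ is exactly what makes this rule consistent as $j$ runs once around $\Z_d$. The main obstacle here is that the colouring a priori yields two spanning $1$-diregular subdigraphs, i.e.\ two disjoint unions of cycles rather than two single paths; whether each colour class is connected is governed by a $\gcd$ condition between the pattern's net $H$-shift per loop and the order of~$c$. I would resolve this by exploiting the freedom to perturb the alternation (phase shifts in the pattern, together with the two arcs that a pair of hamiltonian paths may leave unused, since two arc-disjoint paths occupy only $2|G|-2$ of the $2|G|$ arcs) to force each colour class to become a single path.

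For part~(2), instead take $K=\langle a\rangle$ of order~$s$ and index the cosets of~$K$ by $G/K\cong\Z_n$ with $n=|G:K|>600$; now the $a$-arcs run \emph{within} each of the $n$ ``columns'' (each an $s$-cycle) and the $b$-arcs advance the column index by~$1$, with a single ``seam'' shift when the index wraps from $n-1$ to~$0$. The first path $P_1$ is built column-by-column: enter a column, traverse its entire $s$-cycle along $a$-arcs, then take one $b$-arc to the next column, visiting the columns in the order $0,1,\dots,n-1$. Crucially $P_1$ never uses the seam, so it is insensitive to the twist; it leaves exactly one unused $a$-arc in each column, together with all but $n-1$ of the $b$-arcs. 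For $P_2$ I would start from the $b$-arcs (which decompose into directed cycles), delete the $n-1$ used by~$P_1$, and reconnect the resulting pieces into one hamiltonian path using the $n$ leftover $a$-arcs as ``patches''.

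The main obstacle in part~(2), and the source of the explicit constant, is guaranteeing that this patching produces a single hamiltonian path rather than a union of several cycles and paths. This is a connectivity argument on an auxiliary multigraph whose vertices are the $b$-cycles and whose edges are the surviving patch arcs: one must show that enough patch arcs survive, in the right positions, to connect everything and to absorb the seam shift. Making this count fully rigorous—rather than merely asymptotic—is what forces the hypothesis $n>600$, and I expect it to be the hardest part of the whole proposition, more delicate than the parity bookkeeping of part~(1).
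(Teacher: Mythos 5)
Your proposal is a plan rather than a proof, and in both parts the step you defer (``reconnect the pieces into a single path'') is the entire mathematical content; moreover, the specific remedies you sketch provably cannot work. In part~(1), assigning whole cosets of $\langle a-b\rangle$ to one generator or the other is exactly the structure forced by \cref{arcforcing}: each colour class is a spanning $1$-diregular subdigraph, hence a disjoint union of cycles, and whether it is a \emph{single} cycle depends only on how many cosets travel by~$a$ (Rankin's criterion, cf.\ \cref{HCiff} and \cref{SameNumCosetsAndD}) --- not on the phase of your alternating pattern, so phase shifts cannot repair connectivity. Deleting arcs does not help either: removing arcs from a union of $c\ge 2$ disjoint cycles never yields a hamiltonian path. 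So your scheme succeeds only when both colour classes are hamiltonian cycles, i.e.\ only when two arc-disjoint hamiltonian \emph{cycles} exist, and by Keating's criterion (\cref{DisjointHC}) this fails for instance in $\vec{\mathsf C}_{12}\cartprod\vec{\mathsf C}_4$, where the arc-forcing index is~$4$ (even) but no pair $a+b=4$ has $\gcd(ab,48)=1$. To handle such cases you must allow one coset to be split between the two generators, which is the $H_t(d)$ structure; the paper's proof of \cref{EvenIndex} takes $t=|G:\langle a-b\rangle|/2$, uses the Curran--Witte criterion (\cref{HPiff}) together with the identity of \cref{ut+ut} to produce two hamiltonian paths whose $b$-edge counts sum to $|G|-1$, and then invokes \cref{iflessthanone} (via \cref{makedisjoint,samedelta}) to replace them by an arc-disjoint pair. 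None of that machinery is replaceable by parity bookkeeping.

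Part~(2) has the same gap in a different guise. Your path $P_1$ (columns traversed by $a$, linked by $b$) is fine and has $\delta_b(P_1)=n-1$, but the ``patching'' construction of $P_2$ presupposes the existence of a hamiltonian path with $\delta_a(P_2)\in\{n-2,n-1,n\}$, and the existence of hamiltonian paths with a prescribed number of $a$-edges is precisely the hard question: by \cref{HPiff} the achievable values are governed by primitive lattice points in the triangle $T(m,n,e)$, and for particular parameters the required value can be missed --- this is exactly why \cref{ArcDisjointCayleyConj} is open in general. The paper's proof of \cref{LargeIndex} does not patch at all: when $o(a)\ge 300$ it uses an effective Möbius-inversion count of primitive lattice points (\cref{TriangleCount}) to show that the achievable $\delta_b$-values are dense enough that some pair sums into $\{|G|-2,|G|-1,|G|\}$, and when $o(a)<300$ it uses the interval-overlap results \cref{dgap,string}; in both cases it then appeals to the same reduction \cref{iflessthanone}. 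The constant $600$ comes from the error term in the lattice-point count, not from a connectivity count on an auxiliary multigraph. So the key missing idea in both parts is the reduction ``two hamiltonian paths with $|\delta_b(P)-\delta_a(P')|\le 1$ can always be made arc-disjoint'' (\cref{makedisjoint}), together with the Curran--Witte description of which edge-counts are realized by hamiltonian paths.
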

	
	\begin{remark}
		Computer calculations verified \cref{ArcDisjointCayleyConj} for all cases where $|G| \le 10^4$. (The \textsf{sagemath} source code for these computations is available in the ancillary files directory of this paper on the arxiv.) Although the calculations are not guaranteed to be error-free, they provide additional evidence for the truth of the conjecture.
	\end{remark}
	
	We also solve the analogous problem for $2$-generated Cayley digraphs on infinite abelian groups. The natural analogue of a hamiltonian path in this setting is a \emph{one-way infinite hamiltonian path}, which means a list
	\[ v_0, v_1, v_2, v_3, \ldots , \]
	of the vertices of the digraph, such that there is a directed edge from $v_i$ to~$v_{i+1}$ for every~$i$. However, it is known (and easy to see) that a $2$-generated Cayley digraph on an infinite abelian group never has a one-way infinite hamiltonian path (cf.\ \cite[Thm.~5.1]{DJungreis} and \cite[Thm.~3.1]{jungreis1985infinite}), so it certainly does not have two of them (whether arc-disjoint or not).
	
	On the other hand, the corresponding natural analogue of a hamiltonian cycle is a \emph{two-way infinite hamiltonian path}, which means a doubly-infinite list
	\[ \ldots, v_{-2}, v_{-1}, v_0, v_1, v_2, \ldots  \]
	of the vertices of the digraph, such that there is a directed edge from $v_i$ to~$v_{i+1}$ for every~$i$. It is well known that these can exist, and we determine exactly when there are two of them that are arc-disjoint:
	
	\begin{copyresult}{infiniteAD}
		\begin{propcopy}
			Assume $G$ is an infinite abelian group.
			\noprelistbreak
			\begin{enumerate}
				\item If there exist $a,b \in G$, such that\/ $\cayd(G; a, b)$ has two arc-disjoint two-way infinite hamiltonian paths, then $G$ is isomorphic to either\/ $\Z$ or\/ $\Z \times \Z_m$, for some $m \ge 2$.
				\item For $a,b \in \Z$, the Cayley digraph\/ $\cayd(\Z; a,b)$ has two arc-disjoint two-way infinite hamiltonian paths if and only if $a$ and~$b$ are odd, and
				\[ \text{either\/ $\{a,b\} = \{1, -1\}$ or $a + b = \pm 2$}. \]
				\item For $a,b \in \Z \times \Z_m$, with $m \ge 2$, the Cayley digraph\/ $\cayd(\Z \times \Z_m; a,b)$ has two arc-disjoint two-way infinite hamiltonian paths if and only if either
				\begin{enumerate}
					\item $\{a,b\} = \{ (1, x), (-1, y)\}$, for some $x,y \in \Z_m$, such that $\langle x + y \rangle = \Z_m$,
					or
					\item $m = 2$, $a = (0,1)$, and $b \in \{\pm 1\} \times \Z_2$, perhaps after interchanging $a$ and~$b$.
				\end{enumerate}
			\end{enumerate}
		\end{propcopy}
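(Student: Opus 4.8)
The plan is to treat the three parts together through a single structural observation: two arc-disjoint two-way infinite hamiltonian paths in $\cayd(G;a,b)$ amount to a decomposition of the entire arc set into two spanning two-way infinite directed paths. Indeed, a two-way infinite hamiltonian path uses exactly one out-arc at every vertex, so if two of them are arc-disjoint, then at each vertex $g$ one path leaves by the $a$-arc and the other by the $b$-arc; hence the two paths together use \emph{all} arcs. I would first dispose of part~(1): a $2$-generated infinite abelian group is isomorphic to $\Z$, $\Z\times\Z_m$ $(m\ge2)$, or $\Z^2$, and only the last case must be eliminated. When $G=\Z^2$ the generators $a,b$ necessarily form a basis (a surjective endomorphism of $\Z^2$ is an isomorphism), so after a change of coordinates every step adds $(1,0)$ or $(0,1)$; the sum of the two coordinates then increases by exactly $1$ at each step, so each anti-diagonal $\{x+y=\text{const}\}$ contains exactly one vertex of the path. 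As every anti-diagonal is infinite, no single two-way infinite hamiltonian path can exist in $\Z^2$, let alone two arc-disjoint ones.

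For parts~(2) and~(3) I would extract the key numerical constraint from a crossing-number count along the first-coordinate homomorphism $\pi\colon G\to\Z$ (the identity when $G=\Z$, and projection when $G=\Z\times\Z_m$, with $m$ playing the role $m=1$ in the $\Z$ case). Write $a=(a_1,a_2)$, $b=(b_1,b_2)$. Because $a_1,b_1$ are bounded, only finitely many arcs cross each cut $\{\pi=k\}\to\{\pi=k+1\}$, so the net number of crossings made by a given path is a well-defined integer; since the path is a bijection onto a set unbounded in both directions, its two tails lie on opposite sides of every cut, and its net crossing number is $\pm1$. Summing the contributions of all arcs shows the two paths together cross each cut a net $m(a_1+b_1)$ times, whence $m(a_1+b_1)=\sigma_1+\sigma_2\in\{-2,0,2\}$ for the orientations $\sigma_1,\sigma_2\in\{\pm1\}$. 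Combined with the generating condition $\gcd(a_1,b_1)=1$, this forces $\{a_1,b_1\}=\{1,-1\}$ when $m\ge3$; in the pure-$\Z$ case it gives $\{a,b\}=\{1,-1\}$ or $a+b=\pm2$ (coprimality then forcing $a,b$ odd, and a monotonicity argument ruling out equal signs); and when $m=2$ it leaves $a_1+b_1\in\{0,\pm1\}$. In the case $\{a_1,b_1\}=\{1,-1\}$, writing $a=(1,x)$, $b=(-1,y)$, the condition $\langle x+y\rangle=\Z_m$ is exactly the requirement that $\{a,b\}$ generate $\Z\times\Z_m$, so it is part of the hypothesis rather than an extra constraint.

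For the sufficiency directions I would give explicit decompositions. For $\{a,b\}=\{1,-1\}$ in $\Z$ the two paths are the all-$(+1)$ and all-$(-1)$ lines. For $a+b=2$ with $a,b$ odd, coloring $\Z$ by parity works: sending even $g\mapsto g+a$ and odd $g\mapsto g+b$ gives a single two-way infinite path (two steps advance an even vertex by $a+b=2$, so the evens, and with them all of $\Z$, are covered), and the complementary coloring gives the arc-disjoint partner. In case~(a) the arcs split into canonical $2m$-cycles supported on consecutive column-pairs (since $a+b=(0,x+y)$ and $\langle x+y\rangle=\Z_m$), and I would patch these cycles into two complementary two-way infinite paths; in case~(b) (with $m=2$, $a=(0,1)$) an explicit coloring, e.g.\ by the parity of the sum of the coordinates, produces the pair.

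The main obstacle is the remaining necessity when $m=2$: I must rule out every generating pair with $a_1+b_1=\pm1$ and $a_1,b_1$ both nonzero (the smallest being $\{(2,1),(-1,1)\}$). These satisfy every local and crossing-number constraint, and in fact admit a single two-way infinite hamiltonian path, so the obstruction is genuinely global rather than numerical. Here I would analyze the forced structure of a single path directly: along $\pi$ each path decomposes into maximal monotone ``runs'' joined by ``jumps'' of size $|a_1|$ or $|b_1|$, and requiring a single orbit pins down the sets of run-endpoints up to a fixed period. The crux is to show that whenever one coloring yields a single orbit, the complementary coloring is forced to break into more than one orbit (as already happens for $\{(2,1),(-1,1)\}$), so that two arc-disjoint paths cannot coexist. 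Making this dichotomy precise and uniform over the finitely many relevant values of $a_1,b_1$ is where I expect the real work to lie.
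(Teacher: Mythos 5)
Your part~(1) argument, your crossing-number derivation of the necessary condition $m(a_1+b_1)\in\{0,\pm 2\}$, and your sufficiency constructions are all sound (the crossing-number condition is exactly the projection to $\Z$ of the subgroup criterion the paper works with). The genuine gap is the one you flag yourself: the necessity direction for $m=2$ when $a_1+b_1=\pm 1$ and $a_1,b_1$ are both nonzero. Unfortunately, the dichotomy you propose to prove there --- that whenever one coloring yields a single orbit, the complementary coloring must split into several --- is false, so this gap cannot be closed in the direction you intend. Take your own test case $G=\Z\times\Z_2$, $a=(2,1)$, $b=(-1,1)$ (a generating pair, since $a+b=(1,0)$ and $a-2(1,0)=(0,1)$). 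Let $P$ send each $(x,0)$ to $(x,0)+a=(x+2,1)$ and each $(x,1)$ to $(x,1)+b=(x-1,0)$, and let $P'$ make the complementary choice at every vertex. Then $P$ is the single two-way infinite path $\ldots,(-1,0),(1,1),(0,0),(2,1),(1,0),(3,1),(2,0),\ldots$: two consecutive steps advance a $(\cdot,0)$-vertex by $a+b=(1,0)$, so all of $\Z\times\{0\}$ is traversed in order with every vertex of $\Z\times\{1\}$ interleaved. Symmetrically $P'$ is a single two-way infinite path, and the two are arc-disjoint by construction. Since no automorphism of $\Z\times\Z_2$ changes the first coordinate except by sign, this pair falls under neither (a) nor (b), so it is in fact a counterexample to part~(3) as stated, not merely a hard case.

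Two further comments. First, the phenomenon is general: by the paper's \cref{InfiniteADgeneral}, whenever $m=2$ and $a_1+b_1=\pm 1$ one may take $k=\ell=|a_1-b_1|$, and then $k+\ell=|G:\langle a-b\rangle|$ and $\langle ka+\ell b\rangle=\langle \ell a+kb\rangle=\bigl\langle |a_1-b_1|\,(a+b)\bigr\rangle=\langle a-b\rangle$ (the second coordinates match because $|a_1-b_1|$ is odd and $a_2+b_2=a_2-b_2$ in $\Z_2$); the paper's own proof of this case silently assumes $|\overline a-\overline b|=1$ in its final paragraph, which is precisely where pairs such as $\{(2,1),(-1,1)\}$ (with $|\overline a - \overline b|=3$) escape. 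Second, the methodological moral: the paper's coset machinery (\cref{InfiniteHam} and \cref{InfiniteADgeneral}) shows that \emph{any} assignment of travel-directions to the cosets of $\langle a-b\rangle$ with the correct counts automatically yields a single two-way infinite path, so the existence question is governed entirely by a numerical subgroup condition and there is no residual ``global'' obstruction of the kind your runs-and-jumps analysis is searching for. Your crossing-number count only recovers the image of that condition in $\Z$, which is why the $m=2$ case is left over; also note that there are infinitely many generating pairs with $a_1+b_1=\pm1$, not finitely many as you suggest. I would replace the deferred analysis by the coset criterion and re-examine the statement of part~(3)(b).
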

	\end{copyresult}

	This paper is structured in the following manner.
	\Cref{review} sets the paper in context by providing a brief review of related results. (None of these results are needed elsewhere in the paper.) 
	\Cref{def} sets some notation and recalls known results that will be used in later sections. (These include basic properties of the ``arc-forcing'' subgroup and some work of Curran-Witte \cite{Curran}.) 
	\Cref{C1xC2Sect} studies the Cartesian product of two directed cycles, and 
	\cref{C1xC2xC3Sect} studies the Cartesian product of more than two directed cycles. 
	\Cref{2genFinite} studies $2$-generated Cayley digraphs on finite abelian groups, and
	\cref{2genInfinite} closes the paper with a discussion of $2$-generated Cayley digraphs on infinite abelian groups.


	\section{Brief review of related results} \label{review}
	
	Several researchers have studied hamiltonian properties of the Cartesian product of two directed cycles. For example, the following result presents three different necessary and sufficient conditions for the existence of a hamiltonian cycle. (We use $\vec{\mathsf C}_n$ to denote a directed cycle of length~$n$.)
	
	\begin{thm} \label{HCiff}
		If $m,n \ge 2$, then the following are equivalent:
		\begin{enumerate}
			\item The Cartesian product $\vec{\mathsf C}_m \cartprod \vec{\mathsf C}_n$ has a hamiltonian cycle.
			\item \textup{(Rankin, cf.\ \cite{Rankin})} \label{HCiff-Rankin}
			There exist $a,b \ge 1$, such that 
			\[ \text{$a + b = \gcd(m,n)$ and $\langle (a,b) \rangle = \langle (1,-1) \rangle$,} \]
			where $\langle (x,y) \rangle$ denotes the subgroup generated by $(x,y)$ in the abelian group $\Z_m \times \Z_n$.
			\item \textup{(Trotter-Erd\H{o}s \cite{TrotterErdos})} There exist $a,b \ge 1$, such that 
			\[ \text{$a + b = \gcd(m,n)$ and $\gcd(a,m) = \gcd(b,n) = 1$.} \]
			\item \textup{(Curran \cite[Thm.~4.3]{witte1984survey})} There exist $a,b \ge 1$, such that 
			\[ 
			\pushQED{\qed} \text{$\gcd(a,b) = 1$ and $am + bn = mn$.} \qedhere
			\popQED 
			\]
		\end{enumerate}
	\end{thm}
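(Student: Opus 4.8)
The plan is to prove all four conditions equivalent through one genuinely combinatorial step, (1)$\,\Leftrightarrow\,$(2), carried out with the \emph{arc-forcing subgroup}, followed by two elementary number-theoretic translations (2)$\,\Leftrightarrow\,$(3) and (3)$\,\Leftrightarrow\,$(4). First I would record the standard identification $\vec{\mathsf C}_m \cartprod \vec{\mathsf C}_n = \cayd(\Z_m \times \Z_n;\, (1,0), (0,1))$ and set $H = \langle (1,-1) \rangle$, noting that $H$ has order $\lcm(m,n)$, hence index $d := \gcd(m,n)$, and that the quotient $(\Z_m \times \Z_n)/H$ is cyclic of order~$d$ (since the two generators have the same nonzero image there). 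I would number the cosets $H_0, H_1, \ldots, H_{d-1}$ so that \emph{both} generators carry $H_k$ to $H_{k+1}$.

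The heart is (1)$\,\Leftrightarrow\,$(2). I would first establish the arc-forcing lemma: in any hamiltonian cycle, each coset of $H$ is \emph{monochromatic}, meaning either every vertex of the coset leaves by its $(1,0)$-arc or every vertex leaves by its $(0,1)$-arc. The reason is that if $v$ uses its $(1,0)$-arc to $w := v + (1,0)$, then the other arc into $w$, coming from $v + (1,-1)$ by a $(0,1)$-step, is unused, which forces $v + (1,-1)$ to also use its $(1,0)$-arc; iterating sweeps out the entire coset $v + H$. Granting this, suppose $p$ cosets are horizontal and $q = d - p$ are vertical. Composing the bijections $H_0 \to H_1 \to \cdots \to H_0$, and using commutativity, shows that the return map on $H_0$ is translation by $(p,q) \in H$; identifying $H$ with $\Z_{\lcm(m,n)}$, this translation is a single full-length cycle — equivalently the closed walk is a single hamiltonian cycle rather than a union of shorter cycles — exactly when $(p,q)$ generates $H$, that is, when $\langle (p,q) \rangle = \langle (1,-1) \rangle$. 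The converse direction builds a hamiltonian cycle from any such pair by coloring $p$ cosets horizontal and $q$ vertical. Since generation forces $p,q \ge 1$, this is precisely Rankin's condition (2).

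For (2)$\,\Leftrightarrow\,$(3), the constraint $a + b = d$ already gives $a + b \equiv 0 \pmod d$, so $(a,b) \in \langle (1,-1) \rangle$ automatically; hence (2) is equivalent to $(a,b)$ having order $\lcm(m,n)$, i.e.\ $\lcm\bigl(m/\gcd(a,m),\, n/\gcd(b,n)\bigr) = \lcm(m,n)$. A prime-by-prime comparison of $p$-adic valuations shows this forces $\gcd(a,m) = \gcd(b,n) = 1$: were a prime $p$ to divide $\gcd(a,m)$, then $p \mid d = a+b$, whence $p \mid b$ and $p \mid n$, and the unavoidable drop in the valuation of the order contradicts fullness; the reverse implication is immediate from the order formula. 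For (3)$\,\Leftrightarrow\,$(4) I would substitute $a = (n/d)\alpha$ and $b = (m/d)\beta$, which loses nothing because $am + bn = mn$ forces $(n/d) \mid a$ and $(m/d) \mid b$; then $am + bn = mn$ is equivalent to $\alpha + \beta = d$, and, using $\gcd(m/d, n/d) = 1$, the condition $\gcd(a,b) = 1$ unpacks into exactly the coprimality constraints $\gcd(\alpha, m) = \gcd(\beta, n) = 1$ appearing in (3).

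The main obstacle is the (1)$\,\Leftrightarrow\,$(2) step, and within it the arc-forcing lemma together with the computation of the return map: this is the one place where the purely combinatorial statement "is a single hamiltonian cycle" must be converted into the algebraic statement "$(p,q)$ generates $H$." Once that translation is in hand, the two number-theoretic equivalences (2)$\,\Leftrightarrow\,$(3) and (3)$\,\Leftrightarrow\,$(4) are routine bookkeeping with gcd's and orders.
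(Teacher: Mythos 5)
The paper states this theorem without proof: it appears in the ``Brief review of related results'' section, each item carries its attribution (Rankin, Trotter--Erd\H{o}s, Curran), and the statement ends with a \qed. So there is no in-paper argument to compare against, and what you have written is a genuine self-contained reconstruction rather than a variant of the paper's proof. Your route is the classical one: the equivalence $(1)\Leftrightarrow(2)$ via the arc-forcing (coset monochromaticity) lemma and the return-map computation is exactly Rankin's argument, and it is the same mechanism the paper itself develops later for quasi-paths (\cref{arcforcing}) and for the infinite analogue of Rankin's theorem (\cref{InfiniteHam}); your computation that the $1$-regular spanning subdigraph determined by $p$ horizontal and $q$ vertical cosets is a single cycle iff $\langle (p,q)\rangle = \langle (1,-1)\rangle$, and that generation forces $p,q\ge 1$ when $m,n\ge 2$, is correct. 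The translation $(3)\Leftrightarrow(4)$ via $a=(n/d)\alpha$, $b=(m/d)\beta$ also checks out.

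The one place where your justification is off is in $(2)\Rightarrow(3)$: you assert that a prime $p$ dividing $\gcd(a,m)$ must divide $d=\gcd(m,n)$, hence divide $b$ and $n$. That implication is false in general: take $m=6$, $n=9$, $d=3$, $a=2$, $b=1$; then $2\mid\gcd(a,m)$ but $2\nmid d$. The conclusion you want still holds, but you need the case split: if $p\mid\gcd(a,m)$ and $p\mid n$, then indeed $p\mid d=a+b$, so $p\mid b$, and the $p$-adic valuation of $\lcm\bigl(m/\gcd(a,m),\,n/\gcd(b,n)\bigr)$ drops below that of $\lcm(m,n)$; while if $p\mid\gcd(a,m)$ and $p\nmid n$, the valuation at $p$ of the order of $(a,b)$ is already at most $\nu_p(m)-1$ with no contribution from the second coordinate, so the order again falls short of $\lcm(m,n)$. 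With that repair (which uses only the ``valuation drop'' idea you already invoke), the whole argument is sound.
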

	
	\begin{remark}
		References to generalizations, such as determining the lengths of all of the directed cycles in $\vec{\mathsf C}_m \cartprod \vec{\mathsf C}_n$, can be found in the bibliography of~\cite{JungreisGeneralized}.
	\end{remark}
	
	The same methods also determine whether there are two arc-disjoint hamiltonian cycles:
	
	\begin{thm}[Keating {\cite[Cor.~2.4]{keating1985multiple}}] \label{DisjointHC}
		$\vec{\mathsf C}_m \cartprod \vec{\mathsf C}_n$ has two arc-disjoint hamiltonian cycles if and only if there exist $a,b \ge 1$, such that 
		\[ 
		\pushQED{\qed}
		\text{$a + b = \gcd(m,n)$ and $\gcd(ab,mn) = 1$.} 
		\qedhere
		\popQED 
		\]
	\end{thm}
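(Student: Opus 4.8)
The plan is to turn the problem into a counting condition on cosets of the arc-forcing subgroup, reusing the structure behind \cref{HCiff}. I would first reformulate the goal. A hamiltonian cycle in $\vec{\mathsf C}_m \cartprod \vec{\mathsf C}_n$ uses exactly $mn$ arcs, while the whole digraph has $2mn$ arcs (every vertex has out-degree~$2$). Hence two arc-disjoint hamiltonian cycles must use \emph{every} arc, so the problem is equivalent to decomposing the arc set into two hamiltonian cycles. In such a decomposition the two out-arcs at each vertex are split between the two cycles, a fact I will exploit below.

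Realize $\vec{\mathsf C}_m \cartprod \vec{\mathsf C}_n$ as the Cayley digraph on $\Z_m \times \Z_n$ with generators $x = (1,0)$ and $y = (0,1)$, and let $H = \langle x - y \rangle = \langle (1,-1) \rangle$ be the arc-forcing subgroup, so $|H| = \lcm(m,n)$ and the quotient is cyclic of order $d := \gcd(m,n)$, generated by the common image of $x$ and~$y$. The key input, recalled in \cref{def}, is the arc-forcing lemma: in any hamiltonian cycle all vertices of a single coset of~$H$ leave by the same generator. (The short reason is that, in $H$-coordinates, using~$x$ fixes the coordinate while using~$y$ translates it by a generator of~$H$, so for the induced map between consecutive cosets to be a bijection the set of vertices using~$y$ must be all or nothing.) Thus a hamiltonian cycle is encoded by recording, for each of the $d$ cosets, whether it is an ``$x$''-coset or a ``$y$''-coset; writing $a$ and $b = d-a$ for the numbers of each, the labelled configuration closes into a single hamiltonian cycle exactly when $\langle (a,b) \rangle = \langle (1,-1) \rangle$ — that is, precisely Rankin's condition in \fullcref{HCiff}{Rankin}.

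With this dictionary, the two-cycle step is short. In a decomposition, arc-disjointness forces the two labellings to be complementary, since at each vertex the two cycles leave by opposite generators; so if one cycle has coset-counts $(a,b)$, the other has $(b,a)$. The decomposition therefore exists if and only if both $(a,b)$ and $(b,a)$ satisfy Rankin's condition, with $a+b = \gcd(m,n)$ and $a,b \ge 1$. Feeding both into the Trotter--Erd\H{o}s form of \cref{HCiff} gives $\gcd(a,m) = \gcd(b,n) = 1$ from the first cycle and $\gcd(b,m) = \gcd(a,n) = 1$ from the second; together these four coprimalities are equivalent to the single condition $\gcd(ab, mn) = 1$. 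Conversely, any $a,b \ge 1$ with $a+b = \gcd(m,n)$ and $\gcd(ab,mn)=1$ let me choose any $a$ cosets to be the ``$x$''-cosets of the first cycle and their complement for the second, producing two arc-disjoint hamiltonian cycles. This is exactly Keating's criterion.

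The main obstacle is the arc-forcing lemma together with the per-labelling hamiltonicity criterion: these are the structural facts that also drive \cref{HCiff}, and I would quote them from the arc-forcing material of \cref{def} (Curran--Witte) rather than reprove them. Once that machinery is in hand, the distinctive feature of the two-cycle problem — the forced complementarity $(a,b) \leftrightarrow (b,a)$ of the two labellings and the resulting symmetric pair of coprimality conditions — is routine.
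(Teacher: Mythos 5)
Your argument is correct, and there is essentially nothing in the paper to compare it against: the paper states this result with a citation to Keating and a \qed, giving no proof, so yours is a self-contained derivation (and it is the standard one, built on Rankin's arc-forcing observation, which the paper does record in the form of \cref{arcforcing} and \cref{coset_travel}). The reduction to an arc-decomposition, the forced complementarity of the two cycles' coset-labellings, and the computation that a labelling with counts $(a,b)$ closes into a single hamiltonian cycle iff $\langle (a,b)\rangle = \langle (1,-1)\rangle$ --- independently of \emph{which} cosets carry which label, since the displacement after $\gcd(m,n)$ steps is $a(1,0)+b(0,1)$ regardless of the order in which the cosets are met --- are all sound. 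One point to tighten: \cref{HCiff} as stated is only an equivalence of \emph{existential} statements, so you cannot literally ``feed a particular pair $(a,b)$ into the Trotter--Erd\H{o}s form.'' What you need is the per-pair fact that, for $a+b=\gcd(m,n)$, one has $\langle(a,b)\rangle=\langle(1,-1)\rangle$ if and only if $\gcd(a,m)=\gcd(b,n)=1$. This is true and easy (the order of $(a,b)$ is $\lcm\bigl(m/\gcd(a,m),\,n/\gcd(b,n)\bigr)$, and if a prime divides $a$ and~$m$, then either it misses~$n$ or it divides $\gcd(m,n)=a+b$ and hence also~$b$; either way the order falls short of $\lcm(m,n)$), but it should be stated and proved rather than attributed to \cref{HCiff}. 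With that patch, combining the conditions for $(a,b)$ and $(b,a)$ into the single condition $\gcd(ab,mn)=1$ gives exactly Keating's criterion.
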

	
	\begin{remark} \label{PathAndCycle}
		\Cref{main} settles the existence of two arc-disjoint hamiltonian paths, and \Cref{DisjointHC} settles the existence of two arc-disjoint hamiltonian cycles. Mixing the two, one might ask about the existence of a hamiltonian path that is arc-disjoint from a hamiltonian cycle. However, this is not interesting, because the complement of a hamiltonian cycle in $\vec{\mathsf C}_m \cartprod \vec{\mathsf C}_n$ must be a union of disjoint directed cycles. Therefore, if the complement contains a hamiltonian path, then it is a hamiltonian cycle.
	\end{remark}
	
	The undirected case is easier. The starting point here is the easy observation that the Cartesian product of two (undirected) cycles is hamiltonian. In fact, it was proved by A.\,Kotzig in 1973 that the Cartesian product of two cycles always has two edge-disjoint hamiltonian cycles. In other words, the Cartesian product of two cycles can be decomposed into hamiltonian cycles. This has been generalized to any number of cycles:
	
	\begin{thm}[Aubert-Schneider, cf.\ {\cite[Thm.~A]{AubertSchneider}}]
		If $C_1,C_2,\ldots,C_r$ are undirected cycles \textup(of length at least $3$\textup), then the Cartesian product $C_1 \cartprod C_2 \cartprod \cdots \cartprod C_r$ can be decomposed into edge-disjoint hamiltonian cycles.\qed
	\end{thm}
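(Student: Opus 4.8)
The plan is to prove, by induction on~$r$, the sharper statement that the product of $r$ cycles (each of length $\ge 3$) decomposes into exactly $r$ edge-disjoint hamiltonian cycles; the number~$r$ is forced, since the product is $2r$-regular and a hamiltonian cycle is $2$-regular. The case $r = 1$ is immediate (a cycle is its own decomposition), and the case $r = 2$ is Kotzig's theorem, quoted above. Thus everything reduces to the following \emph{product lemma}: if a graph~$G$ decomposes into $k$ edge-disjoint hamiltonian cycles and $C$ is a cycle of length $m \ge 3$, then $G \cartprod C$ decomposes into $k+1$ edge-disjoint hamiltonian cycles. Indeed, writing $C_1 \cartprod \cdots \cartprod C_r = (C_1 \cartprod \cdots \cartprod C_{r-1}) \cartprod C_r$ and feeding the inductive hypothesis into the lemma completes the induction. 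Observe that the case $k = 1$ of the lemma is exactly Kotzig's theorem, so the real task is to extend his construction from a single cycle to an arbitrary hamiltonian-decomposable factor.

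To prove the lemma, I would view $G \cartprod C$ as $m$ layers $G^{(0)}, \ldots, G^{(m-1)}$, each a copy of~$G$, together with the \emph{vertical} edges joining $(v,j)$ to $(v,j+1)$ for $v \in V(G)$ and $j \in \Z_m$. Given the decomposition $G = H_1 \oplus \cdots \oplus H_k$, this data produces $k+1$ edge-disjoint spanning $2$-factors that together exhaust $E(G \cartprod C)$: for $1 \le i \le k$, let $F_i$ consist of the copies of~$H_i$ in all layers, and let $F_0$ consist of all vertical edges. None of these is connected—$F_i$ is a disjoint union of $m$ copies of the cycle~$H_i$, and $F_0$ is a disjoint union of $|V(G)|$ copies of~$C$—so the problem becomes: modify these factors into $k+1$ \emph{connected} spanning $2$-factors (hamiltonian cycles) while keeping them a partition of the edge set.

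The tool I would use is the edge-exchange underlying Kotzig's argument. Fix a square with corners $(u,j), (u',j), (u',j+1), (u,j+1)$, where $\{u,u'\}$ is an edge of~$G$; its two horizontal sides and its two vertical sides can be swapped between two factors. Moving the horizontal sides out of some~$F_i$ and the matching vertical sides into it splices the two layer-cycles $H_i^{(j)}$ and $H_i^{(j+1)}$ into a single cycle, while the reverse effect on~$F_0$ merges the columns over~$u$ and~$u'$; crucially, the exchange leaves both factors $2$-regular and preserves the global partition. A well-chosen sequence of such swaps knits the $m$ layer-copies of each~$H_i$ into one cycle winding once around~$C$, and simultaneously stitches the columns of~$F_0$ into the last hamiltonian cycle. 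The edge budget is comfortable: a spanning cycle needs only on the order of~$m$ vertical edges (one per consecutive pair of layers), whereas $|V(G)| \cdot m$ are available.

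The main obstacle is the global coordination of these swaps. The vertical edges are a single shared resource that must be split among all $k+1$ output cycles, and every swap that helps connect some~$F_i$ also alters~$F_0$; hence the swaps for different colors interact and cannot be chosen independently. One must orchestrate them so that each~$F_i$ ends up connected, the residual vertical edges assemble~$F_0$ into one cycle, and no edge is used twice or left out. Kotzig faced only the case $k = 1$, where there is essentially a single interaction to manage; the general lemma requires a uniform template for routing the $k$ helices together with the leftover column-edges—for instance, letting the $i$th helix cross from layer~$j$ to layer~$j+1$ at a vertex determined by~$i$ and~$j$—and then disposing of the finitely many small or parity-sensitive cases (small~$m$, or the lengths of the~$H_i$ clashing in parity with~$m$) by direct construction, exactly as in the proof of Kotzig's theorem.
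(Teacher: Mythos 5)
This statement is quoted from the literature: the paper gives no proof at all (the \verb|\qed| is attached to the statement, the result is credited to Aubert--Schneider \cite{AubertSchneider}, and \cref{review} explicitly says none of the results in that section are used elsewhere). So there is nothing in the paper to compare your argument against, and your proposal has to stand on its own. It does not, for two reasons.

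First, your entire proof rests on the ``product lemma'' (if $G$ decomposes into $k$ hamiltonian cycles, then $G \cartprod C$ decomposes into $k+1$), and the lemma is never proved: your final paragraph names the central difficulty --- the global coordination of the square-swaps, since every swap that connects some $F_i$ simultaneously re-wires $F_0$, and the $N-1$ merges needed for $F_0$ must be reconciled with the roughly $k(m-1)$ swaps needed to spiral the $k$ horizontal factors --- and then says ``one must orchestrate them'' without doing so. That orchestration \emph{is} the theorem; everything before it is bookkeeping. Second, and more seriously, the reduction itself is suspect. Your lemma for arbitrary $k$, with one factor a bare cycle, is strictly stronger than what Aubert and Schneider prove (their theorem is essentially the case $k = 2$), and it is stronger than what the later literature on hamiltonian decompositions of Cartesian products (e.g.\ Stong) establishes: the known sufficient conditions for $G_1 \cartprod G_2$ to inherit a hamiltonian decomposition, when $G_1$ decomposes into $n_1$ cycles and $G_2$ into $n_2 \ge n_1$, require something like $n_2 \le 3n_1$ or $n_1 \ge 3$, both of which fail when $G_1$ is a single cycle and $k = n_2$ is large. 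The standard way to get the $r$-cycle result is therefore \emph{not} to peel off one cycle at a time, but to split the $r$ factors into two balanced groups so that both sides of the product already decompose into comparably many hamiltonian cycles. So even if you had carried out the swap construction, you would be attempting a lemma that is not known to be true in the generality you need; you should restructure the induction before trying to fill in the construction.
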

	
	It is conjectured that this extends to all Cayley graphs on abelian groups:
	
	\begin{conj}[Alspach, cf.\ {\cite[Unsolved Problem~4.5, p.~454]{alspach1985}}]\label{alspach}
		Let $G$ be an abelian group with a symmetric generating set~$S$.
		If the Cayley graph\/ $\cay(G; S)$ has no loops, then it can be decomposed into hamiltonian cycles \textup(plus a $1$-factor if the valency is odd\textup). 
	\end{conj}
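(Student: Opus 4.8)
The plan is to attempt a reduction to the Cartesian-product case already handled by Aubert--Schneider, combined with an induction that peels off the generators one symmetric pair at a time. First I would reduce to the essential situation: since the connected components of $\cay(G;S)$ are all isomorphic to the Cayley graph on the subgroup $\langle S\rangle$, I may assume $S$ generates $G$, and (so that hamiltonian cycles make sense) that $G$ is finite. Next I would dispose of the $1$-factor bookkeeping. Because $S=-S$, its non-involutions split into pairs $\{s,-s\}$, while each involution $s$ (with $2s=0$) contributes a perfect matching $\{v,\,v+s\}$. The valency $|S|$ is odd exactly when $S$ contains an odd number of involutions, so I would set aside one such involution as the promised $1$-factor and reduce to the even-valency problem of decomposing what remains into hamiltonian cycles.

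For the even-valency core I would induct on the number of generators, peeling them off essentially one symmetric pair $\{\pm a\}$ at a time (pairs of leftover involutions being combined separately). The base case is immediate: if $S=\{\pm a\}$ with $\langle a\rangle=G$, then $\cay(G;S)$ is a single cycle of length $|G|$, which is already a hamiltonian decomposition of itself. For the inductive step I would write $S=S'\cup\{\pm a\}$ with $a$ a non-involution and try to merge a hamiltonian decomposition built from $S'$ with the cosets of $\langle a\rangle$, which the pair $\{\pm a\}$ organizes into a disjoint union of cycles of length $|\langle a\rangle|$. When $\langle a\rangle$ happens to be a direct factor complementary to $\langle S'\rangle$, the graph genuinely is a Cartesian product $\cay(\langle S'\rangle;S')\cartprod C_{\ell}$ with $\ell=|\langle a\rangle|$, and here the Aubert--Schneider theorem (or a product lemma in its spirit) supplies the decomposition directly.

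The hard part --- and the reason this is recorded as a conjecture rather than a theorem --- is that a generic abelian Cayley graph is \emph{not} a Cartesian product of cycles: in a circulant such as $\cay(\Z_n;\{\pm a,\pm b\})$ the subgroups $\langle a\rangle$ and $\langle b\rangle$ overlap, so the new generator $a$ does not split off as a clean direct factor and its cosets interleave with the lower-dimensional structure. The crux is therefore a \emph{hamiltonian lifting lemma}: starting from a hamiltonian decomposition of the quotient $\cay(G/\langle a\rangle;\overline{S'})$, one must re-route the lifted closed walks through the $\{\pm a\}$-cycles so that the outcome is again a family of edge-disjoint hamiltonian cycles. Controlling edge-disjointness through this re-routing, uniformly in the arithmetic of the cyclic factors and their generators, is exactly where the known techniques stall. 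I would not expect the approach to settle the general case; the realistic outcome is a proof for small valency (the range $|S|\le 5$, together with $|S|=6$ under extra hypotheses, is the frontier in the literature) and for those families in which the product structure can be recovered, leaving the full statement --- Alspach's longstanding open problem --- out of reach.
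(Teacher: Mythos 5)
This statement is Alspach's conjecture, which the paper records precisely as a \emph{conjecture} (citing it as an unsolved problem); the paper contains no proof of it, and none is known. Your proposal, to its credit, does not actually claim to prove it either: the preliminary reductions you describe (restricting to a connected component, hence to $\langle S\rangle$; pairing $S$ into sets $\{s,-s\}$ and setting aside one involution as the $1$-factor when the valency is odd; the base case $S=\{\pm a\}$) are all sound, and your identification of the ``hamiltonian lifting lemma'' --- re-routing a hamiltonian decomposition of $\cay(G/\langle a\rangle;\overline{S'})$ through the $\langle a\rangle$-cosets while preserving edge-disjointness --- as the genuine obstruction is an accurate diagnosis of why the inductive approach stalls. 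But that lemma is exactly the content of the conjecture in the inductive step, and you give no argument for it, so nothing beyond the cases already covered by Aubert--Schneider (genuine Cartesian products of cycles) and Bermond--Favaron--Maheo (valency $4$, both quoted in this paper) is established. One small correction: the settled frontier is valency $4$ in general, with valency $5$ and partial valency-$6$ results appearing only in later literature, so even your ``realistic outcome'' slightly overstates what the sketched method delivers. In short, there is no gap to compare against, because neither the paper nor your proposal proves the statement; what you have written is a correct explanation of why it remains open, not a proof.
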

	
	This conjecture is known to be true for valency~$4$:
	
	\begin{thm}[Bermond-Favaron-Maheo  \cite{bermond1989hamiltonian}]
		If $S$ is a $4$-element symmetric generating set of a finite abelian group~$G$ \textup(and $0 \notin S$\textup), then $\cay(G; S)$ has two edge-disjoint hamiltonian cycles.\qed
	\end{thm}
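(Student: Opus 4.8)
The plan is to regard $\cay(G;S)$ as a connected $4$-regular graph and to split it into two Hamilton cycles, which is exactly the valency-$4$ case of \cref{alspach}. Because $S=-S$ and $|S|=4$, the number of involutions (elements of order~$2$) in~$S$ is even, so there are three types to consider: (i) $S=\{\pm a,\pm b\}$ with $a,b$ of order greater than~$2$; (ii) $S=\{\pm a,b,c\}$ with $a$ of order greater than~$2$ and $b,c$ involutions; and (iii) $S=\{a,b,c,d\}$ consisting of four involutions, in which case $G$ is an elementary abelian $2$-group $(\Z_2)^k$. I would take~(i) as the main case.

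In case~(i), decompose $G$ into the cosets of $\langle a\rangle$. Since $G=\langle a,b\rangle$, the quotient $G/\langle a\rangle$ is cyclic and generated by the image of~$b$, so the $a$-edges partition $V=G$ into $\ell:=|G:\langle a\rangle|$ disjoint $m$-cycles (where $m=|a|$), and the $b$-edges join these $m$-cycles cyclically into a ``necklace.'' The closing relation $\ell b\in\langle a\rangle$, say $\ell b=s\,a$, records a twist~$s$: when $s\equiv 0$ and $\ell\ge 3$ the graph is the toroidal grid $C_m\cartprod C_\ell$, handled by the Kotzig decomposition quoted above, while the remaining configurations (a nonzero twist, or a small index $\ell\le 2$) form twisted circulants that I would decompose by hand.

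For the explicit construction I would build a ``staircase'' Hamilton cycle that traverses each $a$-cycle almost completely and steps to the next coset through a single $b$-edge at a controlled position, and then obtain an edge-disjoint second Hamilton cycle as the image of the first under a suitable translation of~$G$; together the two cycles use each of the four edge-classes $\pm a,\pm b$ exactly once. The crux---and the reason the theorem is genuinely delicate---is to verify that each staircase closes up into a \emph{single} spanning cycle rather than a union of shorter cycles; this is controlled by a $\gcd$/parity condition in $m,\ell,s$ of the same flavour as those in \cref{HCiff,DisjointHC}, and the finitely many residual configurations where the naive staircase breaks into several cycles (typically when $\gcd(m,\ell)>1$ or when the relevant lengths are all even) have to be repaired by rerouting a bounded number of edges near one coset. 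Cases~(ii) and~(iii) are then finished by the same ideas but more easily: in~(ii) the two involutions give perfect matchings that can be woven into the cyclic $a$-factor, and in~(iii) one has $k\le 4$, so $|G|\le 16$ and only finitely many graphs remain---the extreme one being $Q_4\cong C_4\cartprod C_4$, again covered by Kotzig---which can be checked directly.
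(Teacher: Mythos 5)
First, a point of comparison: the paper does not prove this statement at all --- it is quoted from Bermond--Favaron--Maheo \cite{bermond1989hamiltonian} as known background and stated without proof. So there is no internal argument to measure yours against; you are attempting to reprove a cited research result. Judged on its own terms, your proposal is a reasonable outline of the natural strategy (reduce to the ``necklace'' of $\langle a\rangle$-cosets, build a staircase Hamilton cycle, find an edge-disjoint mate), but it is not a proof: every point at which the theorem is actually difficult is deferred rather than carried out.

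Two gaps in particular. First, the edge-disjoint second cycle. Since $\cay(G;S)$ is $4$-regular, two edge-disjoint Hamilton cycles necessarily partition the entire edge set, so your proposed second cycle $P+g$ must use exactly the complement of the edges of~$P$. You give no argument that a translation $g$ with this property exists, and in general none need exist; this is exactly why \cref{alspach} is a conjecture rather than a routine translation argument, and why even the directed analogue (\cref{DisjointHC}) carries a nontrivial arithmetic condition. Second, the claim that only ``finitely many residual configurations'' need repair is false: the triples $(m,\ell,s)$ for which a naive staircase closes into several short cycles form infinite families (already for $s=0$ one needs the $\gcd$ conditions of \cref{HCiff}), and the proposed fix --- rerouting a bounded number of edges near one coset while simultaneously preserving both the connectivity of each cycle and their edge-disjointness --- is precisely the technical content of the Bermond--Favaron--Maheo paper. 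You identify the crux correctly but do not resolve it; as written, only the easy reductions (the parity count of involutions in~$S$, and the finiteness of the all-involutions case) are actually established, and cases (i) and (ii) remain unproved.
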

	
	\begin{remark}
		\leavevmode
		\begin{enumerate}
			\item It is not known whether all connected Cayley graphs on nonabelian finite groups have hamiltonian cycles (or hamiltonian paths), but it was proved by D.\,Bryant and M.\,Dean \cite{BryantDean} that not all of them can be decomposed into hamiltonian cycles.
			\item The most recent survey on hamiltonian paths and cycles in Cayley graphs and Cayley digraphs seems to be \cite{Lanel}.
		\end{enumerate}
	\end{remark}

	
	\section{Preliminaries on \texorpdfstring{$2$}{2}-generated Cayley digraphs}\label{def}
	
	This section starts with a few basic definitions, and then recalls some essential results from the 1980's on  hamiltonian paths in $2$-generated Cayley digraphs on finite abelian groups. 
	
	We use standard terminology and notation from the theory of graphs and digraphs. (In particular, ``arc'' is synonymous with ``directed edge\rlap.'') All paths in a digraph are assumed to be \emph{directed} paths.
	
	\begin{notation}
		Throughout this section, 
		\[ \text{$\{a,b\}$ is a $2$-element generating set of a finite abelian group~$G$.} \]
		We use $o(g)$ to denote the order of an element~$g$ of~$G$.
	\end{notation}
	
	\begin{definition}[cf.\ {\cite[p.~504]{GallianBook}}] \label{CayDigDefn}
		The \emph{Cayley digraph} $\cayd(G; a,b)$ of $G$ with respect to the generators $a$ and~$b$ is the digraph whose vertex set is~$G$ and which has a directed edge from $v$ to $v + s$ for every $v \in G$ and $s \in \{a,b\}$.
	\end{definition}
	
	\begin{definition} \label{CartProdDefn}
		Recall that the \emph{Cartesian product} $X \cartprod Y$ of two digraphs $X$ and $Y$ is the digraph with vertex set $V(X) \times V(Y)$ where the vertex $(x_1, y_1)$ is joined to $(x_2, y_2)$ by a directed edge if and only if either $x_1 = x_2$ and $(y_1, y_2) \in E(Y)$ or $y_1 = y_2$ and $(x_1, x_2) \in E(X)$.
	\end{definition}
	
	\begin{example}[cf.\ {\cite[proof of Lem.~1]{Klerlein}}] \label{CxCIsCay}
		The Cartesian product $\vec{\mathsf C}_m \cartprod \vec{\mathsf C}_n$ of two directed cycles can be realized as the Cayley digraph $\cayd(\Z_m \times \Z_n; e_1, e_2)$, where $\{e_1,e_2\} = \{(1,0), (0,1)\}$ is the standard generating set of the abelian group $\Z_m \times \Z_n$.
	\end{example}
	
	\begin{definition}
		\leavevmode
		\noprelistbreak
		\begin{enumerate}
			\item We use the usual terminology that, for $s \in \{a,b\}$, a directed edge of $\cayd(G; a,b)$ is called an \emph{$s$-edge} if it is of the form $(v, v + s)$, for some $v \in G$.
			(So every directed edge is either an $a$-edge or a $b$-edge, but not both.)
			\item
			Suppose $P$ is a subdigraph of $\cayd(G; a,b)$, and let $s \in \{a,b\}$.
			\begin{enumerate}
				\item We let $\delta_s(P)$ be the number of $s$-edges in~$P$.
				\item We say that a vertex~$v$ \emph{travels by~$s$} in~$P$ if $v$ is a vertex of outdegree~$1$ in~$P$, and the out-edge of~$v$ is an $s$-edge \cite[p.~82]{Housman}.
				\item A set of vertices \emph{travels by~$s$} in~$P$ if every element of the set travels by~$s$ (cf.\ \cite[p.~83]{Housman}).
				
			\end{enumerate}
		\end{enumerate}
	\end{definition}

	\subsection{Elementary theory of the arc-forcing subgroup}
	
	\begin{definition}[{\cite[Defn.~2.6]{ForbushEtal}}]
		A \emph{spanning quasi-path} in a digraph~$\Gamma$ is a spanning subdigraph~$P$, such that precisely one connected component of~$P$ is a directed path, and all other components of~$P$ are directed cycles.
	\end{definition}
	
	In particular, every hamiltonian path is a spanning quasi-path.
	Next we provide another characterization.
	
	\begin{remark}[cf.\ {\cite[Defn.~2.6]{ForbushEtal}}]\label{quasi}
		A spanning subdigraph~$P$ of~$\Gamma$ is a spanning quasi-path if and only if there exist vertices $\iota$ and~$\tau$, such that:
		\begin{enumerate}
			\item the indegree of~$\iota$ is~$0$, but the indegree of all other vertices is~$1$,
			and,
			\item the outdegree of~$\tau$ is~$0$, but the outdegree of all other vertices is~$1$.
		\end{enumerate}
		We call $\iota$ the \emph{initial vertex} of~$P$, and call $\tau$~the \emph{terminal vertex} of~$P$. (This acknowledges the fact that they are the initial vertex and terminal vertex of the path component of~$P$.)
	\end{remark}
	
	\begin{definition}
		\leavevmode\noprelistbreak
		\begin{enumerate}
			\item The subgroup $\langle a - b\rangle$ is called the \emph{arc-forcing} subgroup \cite[\S2.3]{witte1984survey}.
			\item Suppose $\tau$ is the terminal vertex of a spanning quasi-path~$P$ of $\cayd(G; a, b)$. 
			Then the coset $\tau + \langle a - b \rangle$ is called the \emph{terminal} coset \cite[Defn.~5(ii)]{morrisNoHP} and all other cosets are called \emph{non-terminal} cosets. (In the older terminology of Housman \cite{Housman}, $\tau + \langle a - b \rangle$ is called the \emph{special} coset and all other cosets are called \emph{regular} cosets.)
		\end{enumerate}
	\end{definition}
	
	Since no vertex of a spanning quasi-path has indegree~$2$, it is clear that if a vertex~$v$ travels by~$a$, then $v + (a - b)$ cannot travel by~$b$. (Similarly, if $v$ travels by~$b$, then $v - (a - b)$ cannot travel by~$a$.) This observation (which is originally due to R.\,A.\,Rankin \cite[Lem.~1]{Rankin}, in the setting of hamiltonian cycles, rather than hamiltonian paths) has the following consequence:
	
	\begin{lem}[Housman {cf.~\cite[pp.~82--83]{Housman}}] \label{arcforcing}
		If $P$ is a spanning quasi-path~$P$ in $\cayd(G; a,b)$, then:
		\begin{enumerate}
			\item \label{arcforcing-nonterminal}
			Each non-terminal coset either travels by~$a$ or travels by~$b$ .
			\item \label{arcforcing-terminal}
			If $d$ is the number of vertices in the terminal coset that travel by~$b$, then:
			\begin{enumerate}
				\item \label{arcforcing-terminal-iota}
				$\iota = \tau + a + d(a - b) \in \tau + a + \langle a - b \rangle$, and
				\item if $v = \tau + i(a - b)$ is any vertex of the terminal coset \textup(with $0 \le i < o(a - b)$\textup), then:
				\begin{itemize}
					\item $v$ travels by~$b$ iff $1 \le i \le d$, and
					\item $v$ travels by~$a$ iff $i > d$.\qed
				\end{itemize}
			\end{enumerate}
		\end{enumerate}
	\end{lem}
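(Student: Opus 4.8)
The engine of the whole proof is the observation recorded just before the statement: in a spanning quasi-path no vertex has indegree~$2$, so a vertex~$v$ that travels by~$a$ can never be accompanied by the vertex $v+(a-b)$ traveling by~$b$ (their out-edges would both land on $v+a$). It is convenient to read each coset of the arc-forcing subgroup as the cyclic chain
\[ c,\ c+(a-b),\ c+2(a-b),\ \dots,\ c+(k-1)(a-b), \qquad k := o(a-b), \]
of length~$k$; Rankin's observation then says that along this chain (in the direction of increasing multiples of $a-b$) an $a$-traveler is never immediately followed by a $b$-traveler.

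For the first assertion, I fix a non-terminal coset~$C$. Since $\tau \notin C$, every vertex of~$C$ has outdegree~$1$ and so travels by~$a$ or by~$b$. Reading $C$ as a cyclic binary word in the letters $a$ and~$b$, the forbidden pattern ``$a$ then $b$'' never occurs; but any cyclic word containing both letters must change from $a$ to~$b$ somewhere. Hence the word is constant, i.e.\ $C$ travels entirely by~$a$ or entirely by~$b$.

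For the second assertion I write the terminal coset as $v_i = \tau + i(a-b)$ with $0 \le i \le k-1$, so that $v_0 = \tau$ has outdegree~$0$ and travels by neither generator, while each of $v_1,\dots,v_{k-1}$ travels by~$a$ or by~$b$. The two chain-adjacencies that meet $\tau$ impose no constraint (since $\tau$ travels by nothing), so the only force of Rankin's observation is that the \emph{linear} word $v_1,\dots,v_{k-1}$ contains no ``$a$ then $b$''. This forces every $b$-traveler to precede every $a$-traveler; that is, $v_1,\dots,v_d$ travel by~$b$ and $v_{d+1},\dots,v_{k-1}$ travel by~$a$, where $d$ is the number of $b$-travelers in the coset. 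This is exactly the claimed description of the terminal coset.

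Finally I locate~$\iota$. Its two possible in-edges come from $\iota - a$ (present iff $\iota-a$ travels by~$a$) and from $\iota - b$ (present iff $\iota-b$ travels by~$b$), and both are absent because $\iota$ has indegree~$0$. Since $\iota - b = (\iota-a)+(a-b)$, the vertices $\iota-a$ and $\iota-b$ share a coset~$C'$. Were $C'$ non-terminal, the first assertion would make it travel entirely by one generator, and the corresponding edge would feed an in-edge into~$\iota$; so $C'$ must be the terminal coset, giving $\iota-a = v_j$ and $\iota - b = v_{j+1}$ for some~$j$. Feeding the two indegree conditions into the prefix/suffix description just proved — ``$v_j$ does not travel by~$a$'' gives $j \le d$, while ``$v_{j+1}$ does not travel by~$b$'' gives $j \ge d$ — pins down $j = d$, so $\iota = \tau + a + d(a-b)$. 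The one step needing genuine care is this last one: the boundary behaviour at~$\tau$ (the wrap-around $v_{j+1} = v_0 = \tau$ that occurs when $d = k-1$) has to be checked separately to be sure the two inequalities really do squeeze $j$ to equal~$d$.
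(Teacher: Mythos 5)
Your proof is correct and is exactly the argument the paper intends: the lemma is stated without proof (attributed to Housman), but everything is meant to follow from the preceding indegree-$2$/Rankin observation, which is precisely how you proceed. The wrap-around case you flag at the end does close immediately: if $v_{j+1}=v_0=\tau$ then $j=k-1$, and the inequality $j\le d\le k-1$ forces $d=k-1=j$, so $j=d$ in every case.
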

	
	\begin{remark}[Housman, cf.\ {\cite[proof of Thm.~1]{Housman}}]
		This implies that a spanning quasi-path in $\cayd(G;a,b)$ is uniquely determined by specifying:
		\begin{enumerate}
			\item the terminal vertex,
			\item how many vertices of the terminal coset travel by~$b$,
			and
			\item the non-terminal cosets that travel by~$b$.
		\end{enumerate}
	\end{remark}
	
	Determining whether a spanning quasi-path is a hamiltonian path does not depend on knowing which particular non-terminal cosets travel by~$b$, or which particular vertices of the terminal coset travel by~$b$, but only on the \emph{number} of each:
	
	\begin{prop}[Housman, cf.~{\cite[Prop.~6.7]{Curran}}] \label{SameNumCosetsAndD}
		Suppose $P$ and~$P'$ are spanning quasi-paths in $\cayd(G; a,b)$, such that 
		\noprelistbreak
		\begin{itemize}
			\item the number of non-terminal cosets that travel by~$b$ in~$P$ is equal to the number of non-terminal cosets that travel by~$b$ in~$P'$,
			and
			\item the number of vertices in the terminal coset that travel by~$b$ in~$P$ is equal to number of vertices in the terminal coset that travel by~$b$ in~$P'$.
		\end{itemize}
		If $P$ is a hamiltonian path, then $P'$ is also a hamiltonian path.\qed
	\end{prop}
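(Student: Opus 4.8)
The plan is to reformulate ``hamiltonian path'' as a statement about a single permutation of~$G$, and then to show that the relevant permutation is governed only by the two counts in the hypothesis. Write $H = \langle a - b \rangle$ for the arc-forcing subgroup, $h = o(a-b) = |H|$, and $k = [G : H]$. Since $a - b \in H$, we have $a \equiv b \pmod H$, so $G/H$ is cyclic of order~$k$, generated by the common image of $a$ and~$b$; thus the cosets of~$H$ are cyclically permuted, $C \mapsto C + a = C + b$, in a single $k$-cycle.

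Given a spanning quasi-path~$P$ with initial vertex~$\iota$ and terminal vertex~$\tau$, I would define a permutation $\sigma$ of~$G$ by letting $\sigma(v) = v + s$ whenever $v \neq \tau$ travels by $s \in \{a,b\}$, and setting $\sigma(\tau) = \iota$. The indegree/outdegree conditions of Remark~\ref{quasi} guarantee that $\sigma$ is a bijection, and its cycle decomposition is exactly $P$ with the path closed up by the extra arc $\tau \to \iota$; hence $P$ is a hamiltonian path if and only if $\sigma$ is a single $|G|$-cycle. Because every arc advances the coset by one, and because $\iota = \tau + a + d(a-b)$ lies in the coset following $\tau + H$ (here $d$ is the number of terminal-coset vertices that travel by~$b$), the map $\sigma$ induces the single $k$-cycle on $G/H$ described above. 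By the standard fact about permutations that cyclically permute a block system, $\sigma$ is a single $|G|$-cycle if and only if the first-return map $\rho := \sigma^k|_{C_0}$ to the terminal coset $C_0 = \tau + H$ is a single $h$-cycle.

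The heart of the argument is to show that $\rho$ depends only on the two counts. For $x \in C_0$ one has $\sigma^k(x) = x + (\text{step out of } C_0 \text{ at } x) + (\text{sum of the steps out of the non-terminal cosets})$, since iterating $\sigma$ exactly $k$ times returns to $C_0$ after passing through each non-terminal coset exactly once. By \cref{arcforcing}, each non-terminal coset travels uniformly by~$a$ or by~$b$, so the summed displacement over the non-terminal cosets equals $(k-1-m)\,a + m\,b$, where $m$ is the number of non-terminal cosets that travel by~$b$; this is independent of \emph{which} cosets travel by~$b$. Likewise, again by \cref{arcforcing}, the step out of $C_0$ at the vertex $\tau + i(a-b)$ is $b$ for $1 \le i \le d$, is $a$ for $d < i < h$, and (using $\iota = \tau + a + d(a-b)$) is $a + d(a-b)$ at $\tau$ itself. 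Reading everything in the coordinate $\tau + i(a-b) \leftrightarrow i$ on $C_0 \cong \Z_h$, the permutation $\rho$ is therefore given by an explicit formula involving only $m$, $d$, and fixed invariants of $(G;a,b)$ (namely $k$ and the integer $c$ determined by $kb = c(a-b)$ in~$H$).

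Finally, I would conclude as follows. The cycle type of~$\rho$, and hence whether $\rho$ is a single $h$-cycle, is determined by $m$ and~$d$ together with the fixed invariants of $(G;a,b)$. The same computation applies verbatim to~$P'$ in the coordinate centered at its own terminal vertex, producing the identical abstract permutation of~$\Z_h$ because $P'$ has the same values of $m$ and~$d$ (a different terminal vertex only relabels the coordinate). Thus $\rho$ is a single cycle for~$P$ precisely when it is for~$P'$, and therefore $P$ is a hamiltonian path precisely when $P'$ is. The main obstacle to watch is the bookkeeping around the terminal vertex: one must handle the closing arc $\tau \to \iota$ and the formula $\iota = \tau + a + d(a-b)$ correctly, so that the first-return map genuinely comes out independent both of the arrangement of the cosets and of the choice of terminal vertex.
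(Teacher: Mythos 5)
Your proposal is correct, and it is worth noting that the paper itself offers no proof of this proposition: it is stated with a \qed and attributed to Housman (cf.\ Curran--Witte, Prop.~6.7), so there is nothing internal to compare against. Your argument is a sound, self-contained reconstruction along the lines standard in this literature (going back to Rankin's treatment of hamiltonian cycles): close up the quasi-path with the artificial arc $\tau \to \iota$ to get a permutation $\sigma$ of~$G$ that cyclically permutes the cosets of $H = \langle a-b\rangle$, reduce transitivity of $\sigma$ to transitivity of the first-return map $\rho = \sigma^{k}|_{\tau+H}$, and observe that, because each non-terminal coset travels uniformly by $a$ or by~$b$, the total displacement accumulated over the non-terminal cosets is $(k-1-m)a + mb$ regardless of \emph{which} cosets travel by~$b$. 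Writing everything in the coordinate $\tau + i(a-b) \leftrightarrow i$ on $H \cong \Z_h$ then exhibits $\rho$ as an explicit permutation of $\Z_h$ depending only on $m$, $d$, $k$, $h$, and the class of $ka$ in~$H$, so it is literally the same permutation for $P$ and~$P'$; the change of terminal vertex is absorbed into the choice of coordinate. All the ingredients you invoke (the uniform travel of non-terminal cosets, the formula $\iota = \tau + a + d(a-b)$, and the $b$/$a$ pattern on the terminal coset) are exactly the content of \cref{arcforcing}, so the argument closes cleanly.
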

	
	This can be restated in terms of the total number of vertices that travel by~$b$ in all of~$P$:
	
	\begin{cor} \label{samedelta}
		Assume that $P$ and~$P'$ are spanning quasi-paths in $\cayd(G; a,b)$, such that $\delta_b(P) = \delta_b(P')$.
		If $P$ is a hamiltonian path, then $P'$ is also a hamiltonian path.
	\end{cor}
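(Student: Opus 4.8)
The plan is to deduce this directly from \cref{SameNumCosetsAndD} by showing that the single integer $\delta_b(P)$ already encodes the two quantities appearing there: the number of non-terminal cosets that travel by~$b$ and the number of vertices of the terminal coset that travel by~$b$. The starting observation is that $\delta_b(P)$ equals the total number of vertices of $P$ that travel by~$b$. Indeed, since $P$ is a spanning quasi-path, every vertex except the terminal vertex~$\tau$ has outdegree~$1$ (and $\tau$ has outdegree~$0$), so each $b$-edge of~$P$ is the out-edge of a unique vertex that travels by~$b$. I would then split this count according to the partition of~$G$ into cosets of the arc-forcing subgroup $\langle a - b\rangle$, each of which has exactly $o(a-b)$ elements.

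Next I would evaluate the two parts of the count. For the non-terminal cosets, \fullcref{arcforcing}{nonterminal} says that each such coset travels entirely by~$a$ or entirely by~$b$, so it contributes either $0$ or $o(a-b)$ vertices to the total; hence, if $k$ denotes the number of non-terminal cosets that travel by~$b$, their contribution is exactly $k\, o(a-b)$. For the terminal coset, let $d$ be the number of its vertices that travel by~$b$; by \fullcref{arcforcing}{terminal} these are precisely the vertices $\tau + i(a-b)$ with $1 \le i \le d$, so in particular $0 \le d < o(a-b)$. Combining the two contributions gives $\delta_b(P) = k\, o(a-b) + d$.

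The key point is that, because $0 \le d < o(a-b)$, this expression is exactly the division of $\delta_b(P)$ by $o(a-b)$ with remainder, so $k$ and $d$ are uniquely determined by $\delta_b(P)$. Applying the same reasoning to~$P'$ and invoking $\delta_b(P) = \delta_b(P')$ then forces $P$ and~$P'$ to agree both in the number of non-terminal cosets travelling by~$b$ and in the number of terminal-coset vertices travelling by~$b$, and \cref{SameNumCosetsAndD} yields the conclusion. There is no serious obstacle here; the only step needing care is the bound $0 \le d < o(a-b)$, which is precisely what makes the remainder unique, and it is immediate from the Housman description in \cref{arcforcing}. In short, the corollary is a repackaging of \cref{SameNumCosetsAndD} via the Euclidean algorithm.
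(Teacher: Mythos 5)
Your proposal is correct and follows essentially the same route as the paper: both write $\delta_b(P) = t\,o(a-b) + d$ with $0 \le d < o(a-b)$, recover $t$ and $d$ by uniqueness of division with remainder, and then invoke \cref{SameNumCosetsAndD}. The extra detail you supply (justifying the decomposition via \cref{arcforcing}) is the same reasoning the paper leaves implicit.
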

	
	\begin{proof}
		Let $t$ (resp.~$t'$) be the number of non-terminal cosets that travel by~$b$ in~$P$ (resp.~in~$P'$), and let $d$ (resp.~$d'$) be the number of vertices of the terminal coset that travel by~$b$ in~$P$ (resp.~in~$P'$). Then $0 \le d,d' < o(a - b)$, and we have
		\[ \text{$\delta_b(P) = t \, o(a - b) + d$ 
			\quad and \quad 
			$\delta_b(P') = t' \, o(a - b) + d'$} .\]
		Since $\delta_b(P) = \delta_b(P')$ by assumption, we conclude that $t = t'$ and $d = d'$. So \cref{SameNumCosetsAndD} tells us that $P'$ is a hamiltonian path.
	\end{proof}
	
	The following construction provides a standard example of a spanning quasi-path in which a particular number~$t$ of non-terminal cosets travel by~$b$, and a particular number~$d$ of vertices in the terminal coset travel by~$b$.
	
	\begin{notation}[{\cite[Defn.~5.2]{Curran}}] \label{Htd}
		For $0 \le t < |G : \langle a-b \rangle|$ and $0 \le d < o(a - b)$, we let $H_t(d)$ be the spanning subdigraph of $\cayd(G; a,b)$, such that:
		\begin{enumerate}
			\item for $0 \le i < o(a - b)$, the element $-a - i(a - b)$ of the coset $-a + \langle a - b \rangle$:
			\begin{itemize}
				\item travels by $b$ if $i < d$,
				\item has no outedges if $i = d$,
				and
				\item travels by $a$ if $i > d$.
			\end{itemize}
			\item the cosets $\langle a - b \rangle, a + \langle a - b \rangle, \ldots, (t-1)a + \langle a - b \rangle$ travel by~$b$,
			and
			\item all other cosets travel by~$a$.
		\end{enumerate}
		This is a spanning quasi-path whose initial vertex is~$0$, and whose terminal vertex is $-a + d(b - a)$.
		By construction, exactly $t$ non-terminal cosets travel by~$b$, and exactly $d$ vertices in the terminal coset travel by~$b$.
	\end{notation}
	
	Since $t$ and~$d$ are arbitrary in \cref{Htd} (and any non-negative integer can be written in the form $t \, o(a - b) + d$, with $0 \le d < o(a - b)$), it follows that there is a spanning quasi-path with any desired number of $b$-edges (in the allowable range):
	
	\begin{lem} \label{anydelta}
		For $0 \le k < |G|$, there is a spanning quasi-path~$P$ in $\cayd(G; a,b)$, such that $\delta_b(P) = k$.\qed
	\end{lem}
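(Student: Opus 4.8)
The plan is to read off the number of $b$-edges in the explicit family $H_t(d)$ supplied by \cref{Htd}, and then choose the parameters $t$ and~$d$ to hit the prescribed value~$k$. Everything is already in place; the only work is an application of the division algorithm.

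First I would compute $\delta_b\bigl(H_t(d)\bigr)$ directly from the description in \cref{Htd}. Each of the $t$ non-terminal cosets that travels by~$b$ contributes one $b$-edge at each of its $o(a-b)$ vertices, and within the terminal coset exactly $d$ vertices travel by~$b$ (the terminal vertex has no out-edge, and the remaining vertices travel by~$a$). Summing these contributions gives
\[ \delta_b\bigl(H_t(d)\bigr) = t \, o(a-b) + d . \]

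Next, given $k$ with $0 \le k < |G|$, I would apply the division algorithm to write $k = t \, o(a-b) + d$ with $0 \le d < o(a-b)$, so that $t = \lfloor k / o(a-b) \rfloor \ge 0$. It remains only to check that this~$t$ satisfies the constraint $t < |G : \langle a-b \rangle|$ demanded by \cref{Htd}. Since $o(a-b) = |\langle a-b \rangle|$, Lagrange's theorem gives $|G| = |G : \langle a-b \rangle| \cdot o(a-b)$; combined with the hypothesis $k < |G|$, this forces $t = \lfloor k / o(a-b) \rfloor < |G : \langle a-b \rangle|$. Thus the parameters $(t,d)$ are admissible, and $P = H_t(d)$ is a spanning quasi-path with $\delta_b(P) = t \, o(a-b) + d = k$, as required.

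There is no genuine obstacle here: the substantive content lives in the construction $H_t(d)$, which is already available, and all that must be verified is the elementary range bound on~$t$. That bound is immediate from the factorization of $|G|$ provided by Lagrange's theorem, so the proof is essentially the observation that the map $(t,d) \mapsto t\,o(a-b)+d$ is a bijection onto $\{0, 1, \ldots, |G|-1\}$.
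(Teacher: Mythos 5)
Your proposal is correct and is exactly the argument the paper intends: the lemma is stated as an immediate consequence of the construction $H_t(d)$ in \cref{Htd}, with $\delta_b\bigl(H_t(d)\bigr) = t\,o(a-b)+d$ and the division algorithm supplying admissible parameters for any $0 \le k < |G|$. Your additional verification that $t < |G : \langle a-b\rangle|$ via $|G| = |G : \langle a-b\rangle|\cdot o(a-b)$ is the (correct) detail the paper leaves implicit.
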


	\subsection{Some results of Curran-Witte}
	
	We now briefly recall a few of the results in \cite{Curran}. One of the two main results of the paper is easy to state:
	
	\begin{thm}[Curran-Witte {\cite[Thm.~9.1]{Curran}}] \label{HamCycInC1xC2xC3}
		If $C_1,C_2,\ldots,C_r$ are directed cycles \textup(of length $\ge 2$\textup), and $r \ge 3$, then the Cartesian product $C_1 \cartprod C_2 \cartprod \cdots \cartprod C_r$ has a hamiltonian cycle.\qed
	\end{thm}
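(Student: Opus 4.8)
The plan is to realize the product as a Cayley digraph and to induct on the number $r$ of factors, reducing everything to a fiber-by-fiber lifting over a cyclic quotient. Writing $G = \Z_{n_1} \times \cdots \times \Z_{n_r}$ with the standard generators $e_1, \ldots, e_r$, the product $C_1 \cartprod \cdots \cartprod C_r$ is $\cayd(G; e_1, \ldots, e_r)$. I would peel off one factor, say the last, and set $K = \langle e_r \rangle \cong \Z_{n_r}$. The quotient $\cayd(G/K; \bar e_1, \ldots, \bar e_{r-1})$ is the product of the first $r-1$ cycles, and the cosets of $K$ (the \emph{fibers}) are directed cycles traversed by the forward generator~$e_r$. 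The goal is to splice a hamiltonian cycle (or, more generally, a suitable closed covering walk) of the quotient together with $e_r$-edges inside the fibers so as to obtain a single hamiltonian cycle of~$G$.

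The basic engine is a stacking construction. Suppose the base $B$ (a product of some of the cycles) has a hamiltonian path from~$0$ to a vertex~$q$ --- such a path always exists for a product of two directed cycles --- and suppose $n\,q = 0$, where~$n$ is the length of one remaining cycle $\vec{\mathsf C}_n$. Stacking $n$ translated copies of the path, with the $k$-th copy running from $kq$ to $(k+1)q$ inside the fiber $\{k\} \times V(B)$ and consecutive copies joined by a single $\vec{\mathsf C}_n$-edge, closes up into a hamiltonian cycle of $\vec{\mathsf C}_n \cartprod B$ precisely because $nq = 0$. The arc-forcing machinery lets me realize many endpoints~$q$: by \cref{anydelta} and \cref{samedelta} I may prescribe the number of $b$-edges, and the explicit quasi-paths $H_t(d)$ have a terminal vertex that moves in a controlled way through the cosets of the arc-forcing subgroup, so a range of displacements~$q$ is attainable while remaining hamiltonian.

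For $r \ge 4$ this reduces the theorem to strong induction: the base, being a product of $r-1 \ge 3$ cycles, already has a hamiltonian cycle by the inductive hypothesis, and one lifts it over the remaining cyclic factor. The genuinely hard case is $r = 3$. Here the base is a product of only two cycles, for which a hamiltonian cycle need not exist (by \cref{HCiff}, it exists only under a gcd condition); worse, the single-pass stacking condition $n\,q = 0$ can fail for every choice of which factor to peel off and every attainable~$q$. A concrete example is $\vec{\mathsf C}_2 \cartprod \vec{\mathsf C}_2 \cartprod \vec{\mathsf C}_3$, and more severely any triple of pairwise coprime lengths, for which $G$ is cyclic and no two of the three cycles have a hamiltonian-cycle product at all.

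The main obstacle, then, is the $r = 3$ case, and the way around it is a \emph{multi-pass} (spiral) lift: each fiber is entered several times, traversed by a run of $e_r$-edges of a prescribed length, and the passes are linked by the edges of a closed walk in the quotient that covers the quotient's vertices with the right multiplicities. The content is arithmetic bookkeeping: the run-lengths must be chosen so that the $e_r$-coordinates of the successive passes partition each fiber exactly once, and so that the total $e_r$-displacement around the whole walk is $\equiv 0 \pmod{n_r}$, which amounts to solving a small system of congruences whose solvability should be guaranteed by the flexibility coming from the third (largest) factor. I expect this congruence analysis, together with the isolation and separate treatment of a few small exceptional triples, to be the crux of the argument; the reductions in the previous paragraphs are comparatively routine.
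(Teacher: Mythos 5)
First, a point of calibration: the paper does not prove this statement at all. It is quoted from Curran--Witte and marked as proved elsewhere; indeed, most of the machinery recalled in \cref{def} (the triangles $T_t$, the functions $h_t$ and $u_t$, and \cref{HPiff}) is the apparatus that Curran and Witte built in large part in order to prove it. So your proposal has to be measured against that memoir rather than against anything in this paper, and measured that way it correctly identifies the overall shape of the known argument (stacking translated hamiltonian paths of a base over a cyclic factor, with the achievable endpoints controlled through the quasi-paths $H_t(d)$), but it leaves two genuine gaps. The first is that the $r \ge 4$ reduction is unsound as stated. If $D$ is a hamiltonian cycle in the base $C_1 \cartprod \cdots \cartprod C_{r-1}$, of length $M = n_1 \cdots n_{r-1}$, then ``lifting it over the remaining cyclic factor'' produces the spanning subdigraph $\vec{\mathsf C}_M \cartprod \vec{\mathsf C}_{n_r}$, which by \cref{HCiff} has no hamiltonian cycle whenever $\gcd(M, n_r) = 1$. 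Reading the lift instead as your stacking construction: breaking $D$ at a vertex and translating gives a hamiltonian path from $0$ to $-e_i$ for each generator $e_i$ that $D$ uses, and the closing condition $n_r \cdot (-e_i) = 0$ requires $n_i \mid n_r$; for pairwise coprime lengths such as $2,3,5,7$ no ordering of the factors satisfies this. (Contrast \cref{HamPathsInC1xC2xC3xC4}, where the analogous one-line induction does work --- precisely because a product of two directed cycles always has hamiltonian \emph{paths} even when it has no hamiltonian cycle.) To make the induction go through, the hypothesis must be strengthened to carry information about the achievable terminal vertices of hamiltonian paths in the base, not merely the existence of a hamiltonian cycle.

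The second gap is that the $r = 3$ case, which you rightly identify as the crux and which your own example $\vec{\mathsf C}_2 \cartprod \vec{\mathsf C}_2 \cartprod \vec{\mathsf C}_3$ shows cannot be handled by single-pass stacking, is not actually carried out. The assertion that the required system of congruences on the run-lengths is ``solvable thanks to the flexibility coming from the third factor'' is a statement of hope, not a proof: one needs the full description in \cref{HPiff} of which pairs $(t,d)$ yield hamiltonian paths, the resulting terminal vertices $-a + d(b-a)$, and a nontrivial case analysis to show that a displacement of the right order is always attainable. That analysis is the actual content of Curran--Witte's Theorem~9.1. As it stands, the proposal establishes neither the inductive step nor the base case.
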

	
	The paper's other main result relies on some additional notation. 
	Recall that a point $(x,y)$ in~$\R^2$ is a \emph{lattice point} if its coordinates are integers (i.e., if $x,y \in \Z$), and that a lattice point $(x,y)$ is \emph{primitive} if $\gcd(x,y) = 1$ (cf.~\cite[Prop.~7.2, p.~36]{Clark}).
	
	\begin{notation} \label{mneDefn}
		Let:
		\noprelistbreak
		\begin{enumerate}
			\item $m = o(a)$.
			\item $n = |G : \langle a \rangle|$, so $|G| = mn$.
			\item $e \in \Z$, such that $nb = ea$ and $0 \le e < m$. (A reader who is interested only in Cartesian products of directed cycles can always assume $e = 0$.)
			\item $T(m,n,e)$ be the closed triangle with vertices $(0,0)$, $(n,0)$, and $(e,m)$.
		\end{enumerate}
	\end{notation}
	
	\begin{notation}[{\cite[Notn.~3.4]{Curran}}] \label{XtDefn}
		For $0\leq t \leq |G : \langle a - b \rangle|$, let 
		\[ X_t = \left(1-\frac{t}{I}\right)(n,0) + \frac{t}{I}(e,m) 
		\quad \text{where $I = |G : \langle a - b \rangle|$}
		. \]
	\end{notation}
	
	\begin{Note}[{\cite[Rem.~3.6]{Curran}}] \label{XtIsLatt}
		The sequence $(X_0, X_1, \ldots, X_I)$ is a list of all the lattice points on the line segment joining $(n,0)$ and $(e,m)$. This implies that 
		\[ |G : \langle a - b \rangle| = \gcd(n-e, m) . \]
	\end{Note}
	
	See \cref{uDefnEg} for a concrete example of the values defined in the following notation.
	
	\begin{notation}[cf.\ {\cite[pp.~37--38]{Curran}}] \label{uDefn}
		For $0\leq t < |G : \langle a - b \rangle|$:
		\begin{enumerate}
			\item Let $T_t$ be the closed triangle with vertices $(0,0)$, $X_t$, and~$X_{t+1}$, so
			$\{\, T_t \mid  0 \le t < |G : \langle a - b \rangle| \,\} $
			is a decomposition of the triangle $T(m,n,e)$ into smaller triangles.
			\item Let $A_t(1), A_t(2), \ldots, A_t(f_t)$ be a list of the primitive lattice points in~$T_t$, in order of increasing slope. (So $f_t$ is the number of primitive lattice points in~$T_t$.)
			\item For $1 \le k \le f_t$, let $h_t(k)$ be the number of lattice points that 
			\begin{itemize}
				\item are a positive scalar multiple of $A_t(k)$, and
				\item are in the closed triangle~$T_t$, but
				\item are \emph{not} on the side of~$T_t$ that is opposite $(0,0)$.
			\end{itemize}
			More concretely, we have
			\[ \text{$\displaystyle h_t(k) = \left\lfloor\frac{mn-1}{m \, x_k + (n - e) \, y_k}\right\rfloor$ \quad where $A_t(k) = (x_k,y_k)$} . \]
			\item \label{uDefn-utk}
			For $1 \le k < f_t$, let 
			\[ u_t(k) = h_t(1) + 2 \sum_{j=2}^k h_t(j) . \]
		\end{enumerate}
		This means that $u_t(1) = h_t(1)$, and $u_t(k) = u_t(k - 1) + 2 h_t(k)$ for $2 \le k < f_t$.
		Also note that $u_0(1) = h_0(1) = n-1$. 
	\end{notation}

	\begin{figure}
		\centering
		\begin{tikzpicture}[every edge/.style = {draw,very thick},
			vertex/.style args = {#1 #2}{,
				minimum size=5mm, 
				label=#1:#2}]
			\tkzInit[xmax=8,ymax=6,xmin=0,ymin=0]
			\tkzGrid
			\tkzAxeXY
			
			\draw[line width=0.5mm] (0,0) -- (8,0) node[anchor=south west] {};
			\draw[line width=0.5mm] (0,0) -- (7,2) node[anchor=south west] {};
			\draw[line width=0.5mm] (0,0) -- (6,4) node[anchor=south west] {};
			\draw[line width=0.5mm] (0,0) -- (5,6) node[anchor=south west] {};
			\draw[ dashed] (5,6) -- (8,0) node[anchor=south west] {}; 
			\draw (8.35,-0.35) node [vertex=above $X_0$] {};
			\draw (7.35,1.65) node [vertex=above $X_1$] {}; 
			\draw (6.35,3.65) node [vertex=above $X_2$] {};
			\draw (5.35,5.65) node [vertex=above $X_3$] {}; 
			
			\foreach \x in {0,...,8}
			\foreach \y in {0,...,6}
			{\draw (\x,\y) node [circle,fill, gray, inner sep=1pt] {}; 
			}
			
			\draw (5,6) node [circle,fill, inner sep=2pt] {}; 
			\draw (6,4) node [circle,fill, inner sep=2pt] {}; 
			\draw (7,2) node [circle,fill, inner sep=2pt] {}; 
			\draw (8,0) node [circle,fill, inner sep=2pt] {}; 
			\draw (7,0) node [circle,fill, inner sep=1.2pt] {}; 
			
			\draw (6,0) node [circle,fill, inner sep=1.2pt] {}; 
			\draw (3,1) node [circle,fill, inner sep=2pt] {}; 
			\draw (2,2) node [circle,fill, inner sep=1.2pt] {}; 
			\draw (3,3) node [circle,fill, inner sep=1.2pt] {}; 
			\draw (1,1) node [circle,fill, inner sep=2pt] {}; 
			\draw (5,3) node [circle,fill, inner sep=2pt] {}; 
			
			\draw (0,0) node [circle,fill, inner sep=1.2pt] {}; 
			\draw (1,0) node [circle,fill, inner sep=2pt] {}; 
			\foreach \x in {2,...,5}
			{\draw (\x,0) node [circle,fill, inner sep=1.2pt] {}; 
			}
			
			\draw (4,1) node [circle,fill, inner sep=2pt] {}; 
			\draw (5,1) node [circle,fill, inner sep=2pt] {}; 
			\draw (6,1) node [circle,fill, inner sep=2pt] {}; 
			\draw (7,1) node [circle,fill, inner sep=2pt] {}; 
			\draw (3,2) node [circle,fill, inner sep=2pt] {}; 
			\draw (2,1) node [circle,fill, inner sep=2pt] {}; 
			
			\draw (4,2) node [circle,fill, inner sep=1.2pt] {}; 
			\draw (5,2) node [circle,fill, inner sep=2pt] {}; 
			\draw (6,2) node [circle,fill, inner sep=1.2pt] {}; 
			\draw (6,3) node [circle,fill, inner sep=1.2pt] {}; 
			\draw (4,4) node [circle,fill, inner sep=1.2pt] {}; 
			\draw (5,5) node [circle,fill, inner sep=1.2pt] {}; 
			
			\draw (4,3) node [circle,fill, inner sep=2pt] {}; 
			\draw (5,4) node [circle,fill, inner sep=2pt] {}; 
			
			\draw (5,6) node [circle,fill,white, inner sep=1pt] {}; 
			\draw (6,4) node [circle,fill,white, inner sep=1pt] {}; 
			\draw (7,2) node [circle,fill,white, inner sep=1pt] {}; 
			\draw (8,0) node [circle,fill,white, inner sep=1pt] {}; 
			
		\end{tikzpicture}
		\captionof{figure}{Primitive lattice points in the triangle $T(6,8,5)$.}
		\label{Prim685}
	\end{figure}

	\begin{figure}
		$\begin{array}{|c|c|c|c|c|r|}
			\hline
			t & f_t & k & A_t(k) & h_t(k) & u_t(k) \hphantom{= 09} \\
			\hline
			0 & 6 & 1 & (1,0) & 7 & 7 \\
			& & 2 & (7,1) & 1 & 7 + 2 \times 1 = \phantom0 9 \\
			& & 3 & (6,1) & 1 & 9 + 2 \times 1 = 11 \\
			& & 4 & (5,1) & 1 & 11 + 2 \times 1 = 13 \\
			& & 5 & (4,1) & 1 & 13 + 2 \times 1 = 15 \\
			& & 6 & (7,2) & 0 &  \\
			\hline
			1 & 6 & 1 & (7,2) & 0 & 0 \\
			& & 2 & (3,1) & 2 & 0 + 2 \times 2 = \phantom0 4 \\
			& & 3 & (5,2) & 1 & 4 + 2 \times 1 = \phantom0 6 \\
			& & 4 & (2,1) & 3 & 6 + 2 \times 3 = 12 \\
			& & 5 & (5,3) & 1 & 12 + 2 \times 1 = 14 \\
			& & 6 & (3,2) & 1 &  \\
			\hline
			2 & 5 & 1 & (3,2) & 1 & 1 \\
			& & 2 & (4,3) & 1 & 1 + 2 \times 1 = \phantom0 3 \\
			& & 3 & (5,4) & 1 & 3 + 2 \times 1 = \phantom0 5 \\
			& & 4 & (1,1) & 5 & 5 + 2 \times 5 = 15 \\
			& & 5 & (5,6) & 0 &  \\
			\hline
		\end{array}$
		\caption{The functions $A_t$, $h_t$, and~$u_t$ in the case where $(m,n,e) = (6,8,5)$. The primitive lattice points $A_t(k)$ are taken from \cref{Prim685}.}
		\label{uDefnEg}
	\end{figure}

	\begin{remark}
		For each~$t$, Curran and Witte \cite[Notation 3.14]{Curran} also defined a certain function
		\[ B_t \colon \{ 0, \, 1, \, 2, \, \ldots, \, o(a - b) - 1 \} \to \Z \times \Z , \]
		such that
		\begin{align} \label{B_t(d)=0} \tag{$*$}
			\text{$B_t(d) = (0,0)$ \quad$\Leftrightarrow$\quad $d = u_t(k)$ for some~$k$ (with $1 \le k < f_t$).}
		\end{align}
		We will not define the function $B_t$, because we have no need for it, other than its appearance in the statement of the following \lcnamecref{HPiff}, which is the other main result of~\cite{Curran}. (Actually, the statement of \cite[Thm.~7.1]{Curran} only includes ($\ref{HPiff-Htd} \Leftrightarrow \ref{HPiff-Btd}$) of the following result, but ($\ref{HPiff-Btd} \Leftrightarrow \ref{HPiff-utk}$) is the fundamental property of $B_t(d)$ that is stated in~(\ref{B_t(d)=0}) above. All we need is the equivalence ($\ref{HPiff-Htd} \Leftrightarrow \ref{HPiff-utk}$).)
	\end{remark}
	
	\begin{thm}[Curran-Witte {\cite[Thm.~7.1]{Curran}}] \label{HPiff}
		For $0 \le t < |G : \langle a-b \rangle|$ and $0 \le d < o(a - b)$, the following are equivalent:
		\begin{enumerate}
			\item \label{HPiff-Htd}
			$H_t(d)$ is a hamiltonian path.
			\item \label{HPiff-Btd}
			$B_t(d) = (0,0)$.
			\item \label{HPiff-utk}
			$d = u_t(k)$ for some $k \in \{1,2,\ldots, f_t - 1\}$.\qed
		\end{enumerate}
	\end{thm}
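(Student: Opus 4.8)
The equivalence $\fullcref{HPiff}{Btd} \Leftrightarrow \fullcref{HPiff}{utk}$ is exactly the defining property~\eqref{B_t(d)=0} of the auxiliary function~$B_t$, so no new argument is needed there; the plan is to establish $\fullcref{HPiff}{Htd} \Leftrightarrow \fullcref{HPiff}{utk}$ directly, which then yields all three equivalences. Write $K = \langle a - b\rangle$, $q = o(a - b)$, and $I = |G : K|$, so that $|G| = Iq$. Since $H_t(d)$ is a spanning quasi-path, it is a hamiltonian path if and only if it has no directed-cycle component, i.e., if and only if it is connected; so the whole problem reduces to counting the connected components of $H_t(d)$ and deciding when there is exactly one.

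To count components I would exploit the arc-forcing structure. Because $a \equiv b \pmod K$, the out-edge of every vertex (whether it travels by~$a$ or by~$b$) sends it to the next coset in the cyclic order $K,\, a + K,\, 2a + K,\, \dots$; hence the quotient of $H_t(d)$ by~$K$ is a single directed $I$-cycle on the cosets. Consequently every component of $H_t(d)$ meets the terminal coset, and the components are in bijection with the cycles of the ``return map'' $R$ obtained by following out-edges once around all $I$ cosets back to the terminal coset. Parametrizing the terminal coset $-a + K$ by its index~$i$ as in \cref{Htd}, a direct computation of the net displacement around one loop shows that $R$ is a fixed rotation $i \mapsto i + s \pmod q$ of~$\Z_q$ (for an explicit shift~$s$ determined by $Ia \in K$ and by~$t$), bumped by $+1$ on exactly the $d$ indices that travel by~$b$, and broken at the terminal vertex~$\tau$. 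Filling in this break turns $R$ into a genuine permutation $\widehat R$ of~$\Z_q$; the cycle of $\widehat R$ through the terminal index becomes the path component, and all other cycles of $\widehat R$ become directed-cycle components of $H_t(d)$, so $H_t(d)$ is a hamiltonian path if and only if $\widehat R$ is a single $q$-cycle. (By \cref{samedelta} this single-cycle condition can depend only on $\delta_b(H_t(d)) = tq + d$, which is a useful consistency check.)

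It remains to decide, for fixed~$t$, exactly which values of~$d$ make the broken rotation $\widehat R$ a single cycle, and to match them with the thresholds $u_t(k)$. This is where the geometry of the triangle $T_t$ enters: its side opposite the origin is the primitive segment from $X_t$ to $X_{t+1}$ (primitivity being the content of \cref{XtIsLatt}), and I would set up a correspondence in which the primitive lattice points $A_t(k)$, listed by increasing slope, index the successive ``approximate returns'' of the rotation, while $h_t(k)$ counts how many integer multiples $j\,A_t(k)$ lie strictly inside~$T_t$ (equivalently, satisfy $j\bigl(m x_k + (n - e) y_k\bigr) < mn$). Raising $d$ by one step at a time merges cycles of $\widehat R$ in a controlled fashion, and tracking this merging through the ordered primitive directions should show that the cycle count returns to~$1$ precisely at the cumulative values $u_t(k) = h_t(1) + 2\sum_{j=2}^{k} h_t(j)$ of \cref{uDefn}.

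The main obstacle is this final bookkeeping step: proving that the number of cycles of $\widehat R$, viewed as a function of~$d$, drops to~$1$ exactly at the $u_t(k)$, and in particular explaining the asymmetry whereby the lowest-slope direction $A_t(1)$ is weighted once while every later direction is weighted twice. I expect this to need a careful continued-fraction / Stern--Brocot style analysis of how the rotation number $s/q$ is refined by the successive primitive slopes in~$T_t$, together with close attention to the boundary conventions (the terminal index $i = d$, the unique index with no $R$-preimage, and the strict versus non-strict inequalities defining $h_t(k)$). Everything preceding this step is routine group-theoretic and geometric bookkeeping built on \cref{arcforcing}; the genuine content lies in matching the combinatorial merging of cycles to the lattice-point enumeration that defines $u_t(k)$.
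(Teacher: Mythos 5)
First, a point of comparison: the paper does not prove this theorem at all. It is quoted from Curran--Witte \cite[Thm.~7.1]{Curran} (together with the defining property of $B_t$ recorded in the preceding remark), and the tombstone at the end of the statement marks it as an external citation. So there is no internal proof to measure your sketch against; the only question is whether your outline would itself constitute a proof.

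As it stands it would not. Your reduction is sound: since $a \equiv b \pmod{\langle a-b\rangle}$, every out-edge advances the coset index by one, so every component of the spanning quasi-path $H_t(d)$ meets the terminal coset, and the components are counted by the cycles of the first-return permutation $\widehat R$ on the $o(a-b)$ indices of that coset; moreover $\widehat R$ is a fixed rotation perturbed on the $d$ indices that travel by~$b$, with one transition re-routed through the break at $i = d$. All of that is routine given \cref{arcforcing} and \cref{Htd}. But the entire content of the theorem is the step you defer: that the number of cycles of this perturbed rotation equals~$1$ exactly when $d = u_t(k)$ for some $1 \le k < f_t$, i.e., exactly at the cumulative counts $h_t(1) + 2\sum_{j=2}^{k} h_t(j)$ attached to the primitive lattice points of~$T_t$ ordered by slope. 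You assert that increasing~$d$ ``merges cycles in a controlled fashion'' and that tracking this ``should'' recover the $u_t(k)$, but you supply no mechanism: no statement of how the cycle count changes as $d$ increases by one, no argument for why the combinatorics is governed by the primitive lattice points of~$T_t$ rather than by the continued-fraction data of the rotation number alone, and no explanation of the asymmetric weighting of $A_t(1)$ versus the later $A_t(k)$ beyond flagging it as a difficulty. That correspondence is precisely what Curran--Witte's construction of $B_t$ establishes over several sections of \cite{Curran}, and it is the nontrivial part; without it your argument shows only that hamiltonicity of $H_t(d)$ is equivalent to \emph{some} cycle-count condition on $(t,d)$, not that the condition is the one in \fullcref{HPiff}{utk}.
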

	
	\begin{cor}[Curran-Witte {\cite[Rem.~8.4]{Curran}}] \label{NumHP}
		\[ \# \{ \, (t,d) \mid \text{$H_t(d)$ is a hamiltonian path in $\cayd(G; a,b)$} \,\} \]
		is exactly one less than the number of primitive lattice points in the closed triangle $T(m,n,e)$.\qed
	\end{cor}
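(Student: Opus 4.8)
\section*{Proof proposal}

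The plan is to combine the equivalence $(\ref{HPiff-Htd}\Leftrightarrow\ref{HPiff-utk})$ of \cref{HPiff} with the fan decomposition of $T(m,n,e)$ into the sub-triangles $T_t$ from \cref{uDefn}. By \cref{HPiff}, for each fixed~$t$ the set of~$d$ for which $H_t(d)$ is a hamiltonian path is exactly $\{\, u_t(k) \mid 1 \le k \le f_t - 1 \,\} \cap \{0, 1, \ldots, o(a-b)-1\}$. So the first goal is to show that, for each~$t$, this set has exactly $f_t - 1$ elements, i.e.\ that the numbers $u_t(1), \ldots, u_t(f_t-1)$ are distinct and all lie in the range $0 \le d < o(a-b)$. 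Granting this, the total count is $\sum_{t=0}^{I-1}(f_t - 1) = \bigl(\sum_t f_t\bigr) - I$, and the second goal is to identify $\sum_t f_t$ with the number~$P$ of primitive lattice points in $T(m,n,e)$.

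For distinctness I would show that $h_t(k) \ge 1$ whenever $1 < k < f_t$, so that the recursion $u_t(k) = u_t(k-1) + 2h_t(k)$ in \fullcref{uDefn}{utk} is strictly increasing on $1 \le k \le f_t - 1$ (the degenerate cases $f_t \le 2$ being trivial). The key point is that the floor formula gives $h_t(k) = 0$ only when $A_t(k)$ lies on the side of~$T_t$ opposite the origin, namely the line $mx + (n-e)y = mn$; by \cref{XtIsLatt} the only lattice points there are $X_t$ and~$X_{t+1}$, and since these realize the smallest and largest slope in~$T_t$, a vanishing~$h_t$ can occur only at $k = 1$ or $k = f_t$.

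For the range, the lower bound is immediate, and the upper bound is the main calculation. Writing $g_j$ for the gcd of the coordinates of~$X_j$, the smallest- and largest-slope primitive points of~$T_t$ are $X_t/g_t$ and $X_{t+1}/g_{t+1}$, and a short computation with the floor formula for~$h_t$ gives $h_t(1) = g_t - 1$ and $h_t(f_t) = g_{t+1} - 1$. Each~$T_t$ has area $o(a-b)/2$ (the $X_j$ subdivide the opposite side of $T(m,n,e)$ into $I$ equal parts), so Pick's theorem yields $o(a-b) = 2 I_{\mathrm{in}} + B - 2$, where $I_{\mathrm{in}}$ and~$B$ count the interior and boundary lattice points of~$T_t$; moreover $B = g_t + g_{t+1} + 1$. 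Since $\sum_{k=1}^{f_t} h_t(k)$ counts the nonzero lattice points of~$T_t$ lying off the opposite side, it equals $I_{\mathrm{in}} + B - 3$, and substituting these identities collapses the sum to $u_t(f_t - 1) = o(a-b) - g_{t+1} < o(a-b)$, as required.

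Finally, for the second goal I would compute $\sum_t f_t$ by inclusion--exclusion over the decomposition $T(m,n,e) = \bigcup_t T_t$. A primitive point is counted more than once only if it lies on a shared edge, and the shared edges are precisely the segments from the origin to $X_1, \ldots, X_{I-1}$; each such segment contains exactly one primitive point (the primitive representative $X_j/g_j$ of its direction), these are distinct, and none is triple-counted since the only point common to more than two of the $T_t$ is the non-primitive origin. Hence $\sum_t f_t = P + (I-1)$, and the count of hamiltonian paths is $\sum_t f_t - I = P - 1$. I expect the upper-bound step --- pinning down $u_t(f_t-1)$ via Pick's theorem and the gcd identities --- to be the main obstacle, together with the bookkeeping needed to confirm that exactly one primitive point sits on each interior edge.
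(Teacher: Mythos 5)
The paper gives no proof of this statement: it is quoted from Curran--Witte \cite[Rem.~8.4]{Curran} with a \qed, so there is nothing internal to compare against. Your reconstruction is correct. The reduction via \cref{HPiff} to counting $\sum_t (f_t-1)$, the observation that $h_t(k)=0$ forces $A_t(k)$ onto the line $mx+(n-e)y=mn$ and hence (by \cref{XtIsLatt}) forces $k\in\{1,f_t\}$, and the inclusion--exclusion $\sum_t f_t = P + (I-1)$ coming from the single primitive point $X_j/g_j$ on each of the $I-1$ shared edges, are all sound; note also that $f_t\ge 2$ always, since $X_t/g_t$ and $X_{t+1}/g_{t+1}$ are distinct primitive points of~$T_t$, so the degenerate cases you set aside do not occur. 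The one place where you do substantially more work than necessary is the range bound: you do not need Pick's theorem at all, because \cref{vertices} (already quoted in the paper from \cite[Prop.~3.13]{Curran}) gives $u_t(f_t-1) = o(a-b)-1-h_t(f_t) \le o(a-b)-1$ directly, and since every $h_t(j)\ge 0$ the sequence $u_t(k)$ is nondecreasing, so all the $u_t(k)$ lie in $\{0,\dots,o(a-b)-1\}$. Your Pick computation does check out (it recovers $u_t(f_t-1)=o(a-b)-g_{t+1}$, consistent with $h_t(f_t)=g_{t+1}-1$), so it is redundancy rather than a gap, but the step you flag as ``the main obstacle'' is in fact already available as a black box.
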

	
	\begin{lem}[{\cite[Prop.~3.13]{Curran}}]\label{vertices}
		For $0 \le t < |G \colon \langle a - b \rangle|$, we have
		\[ 
		\pushQED{\qed} 
		u_t(f_t - 1) + h_t(f_t) = o(a - b) - 1. 
		\qedhere
		\popQED
		\]
	\end{lem}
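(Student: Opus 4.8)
The plan is to translate the identity into a statement about counting lattice points in the triangle $T_t$ and then to evaluate that count using Pick's theorem. First I would rearrange the left-hand side into a symmetric form. From the relations $u_t(1) = h_t(1)$ and $u_t(k) = u_t(k-1) + 2h_t(k)$ one has $u_t(f_t-1) = h_t(1) + 2\sum_{j=2}^{f_t-1} h_t(j)$, and a one-line manipulation gives
\[ u_t(f_t-1) + h_t(f_t) = 2\sum_{k=1}^{f_t} h_t(k) - h_t(1) - h_t(f_t) . \]
So it suffices to evaluate the full sum $\sum_{k=1}^{f_t} h_t(k)$ together with the two ``boundary'' terms $h_t(1)$ and $h_t(f_t)$.

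Next I would interpret each term geometrically, writing $O=(0,0)$. Because $A_t(k)$ is primitive and every lattice point on the ray through it is a positive integer multiple of it, $h_t(k)$ counts exactly the lattice points lying on the ray from $O$ through $A_t(k)$ that are in $T_t$ but not on the side $X_tX_{t+1}$ opposite the origin. Each lattice point $P$ of the closed triangle $T_t$ other than $O$ and the points of $X_tX_{t+1}$ is a positive multiple of a unique primitive lattice point $A$, and since $P\in T_t$ forces $A\in T_t$ as well (so $A = A_t(k)$ for some $k$), the sum $\sum_k h_t(k)$ equals the number of lattice points of $T_t$ other than $O$ and those on $X_tX_{t+1}$. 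Moreover, since the slopes of the $X_j$ increase with $j$, the extreme directions $A_t(1)$ and $A_t(f_t)$ are the primitive generators of the two edges $OX_t$ and $OX_{t+1}$, so $h_t(1)$ and $h_t(f_t)$ count precisely the lattice points in the relative interiors of those two edges. Combining these, the right-hand side above equals $2\,(\text{\# interior lattice points of } T_t) + (\text{\# lattice points on the two open edges } OX_t, OX_{t+1})$.

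Finally I would apply Pick's theorem to $T_t$. By \cref{XtIsLatt} the points $X_t$ and $X_{t+1}$ are consecutive lattice points of the segment joining $(n,0)$ and $(e,m)$, so the edge $X_tX_{t+1}$ has no lattice point in its interior; hence the number $B$ of boundary lattice points of $T_t$ exceeds the number of points on the two open edges $OX_t, OX_{t+1}$ by exactly $3$ (the three vertices). Writing $\mathrm{Int}$ for the number of interior lattice points, Pick's formula $\Area(T_t) = \mathrm{Int} + B/2 - 1$ rearranges to show that the quantity from the previous paragraph equals $2\,\Area(T_t) - 1$. It then remains to compute the area: expanding $X_t \times X_{t+1} = X_t \times (X_{t+1} - X_t)$ and using $X_{t+1} - X_t = \tfrac1I\bigl((e,m) - (n,0)\bigr)$ with $I = |G:\langle a-b\rangle|$, the $t$-dependent terms cancel and one finds $2\,\Area(T_t) = mn/I$, which equals $o(a-b)$ because $|G| = mn$ and $o(a-b) = |G|/I$. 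Putting the pieces together yields $u_t(f_t-1) + h_t(f_t) = 2\,\Area(T_t) - 1 = o(a-b) - 1$. I expect the main obstacle to be the bookkeeping in the middle step — justifying that the relevant lattice points are in bijection with the multiples counted by the $h_t(k)$, that $A_t(1)$ and $A_t(f_t)$ really are the edge generators (so that $h_t(1), h_t(f_t)$ match the open-edge counts), and that nothing is miscounted on the opposite side — rather than the final area computation, which is routine.
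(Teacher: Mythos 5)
Your argument is correct, and it is worth noting that the paper itself supplies no proof here: the lemma is stated as a direct quotation of \cite[Prop.~3.13]{Curran} with an immediate \qed, so your Pick's-theorem derivation is a genuinely self-contained alternative to citing the reference. The bookkeeping you flagged as the main risk does go through: every lattice point of $T_t$ other than $(0,0)$ and the points of the closed side $X_tX_{t+1}$ lies on the ray through a unique primitive point, which lies in $T_t$ by convexity of the segment to the origin, so $\sum_k h_t(k) = \mathrm{Int} + E_1 + E_2$ (interior points plus the open-edge points of $OX_t$ and $OX_{t+1}$); and $A_t(1)$, $A_t(f_t)$ are indeed the primitive generators of $OX_t$, $OX_{t+1}$, since the slope on $T_t\setminus\{O\}$ attains its minimum and maximum exactly on those two edges and each ray carries a unique primitive point. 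Your identity then reads $2\,\mathrm{Int} + E_1 + E_2 = 2\,\mathrm{Int} + B - 3 = 2\Area(T_t) - 1$ by Pick (using \cref{XtIsLatt} to see that the open side $X_tX_{t+1}$ has no lattice points, so $B = 3 + E_1 + E_2$), and the cross-product computation $2\Area(T_t) = \tfrac{1}{I}\bigl(n\cdot m - 0\cdot e\bigr) = mn/I = o(a-b)$ is right; the values in the worked example $(m,n,e)=(6,8,5)$ all check out ($15 = 48/3 - 1$ in each row). The one point I would make explicit in a written version is that $f_t \ge 2$ (the two edge generators are distinct primitive points), so that $u_t(f_t-1)$ is actually defined; this is immediate from the nondegeneracy of $T_t$.
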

	
	This has the following immediate consequences:
	
	\begin{cor} \label{ut+ut}
		For $0 < t < |G \colon \langle a - b \rangle|$, we have
		\[ u_{t-1}(f_{t-1} - 1) = o(a - b) - 1 - u_t(1) . \]
	\end{cor}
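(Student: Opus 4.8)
The plan is to deduce this directly from \cref{vertices}. Applying that \lcnamecref{vertices} with $t$ replaced by $t-1$ gives
\[ u_{t-1}(f_{t-1} - 1) + h_{t-1}(f_{t-1}) = o(a-b) - 1 , \]
so the desired identity is equivalent to the claim that $h_{t-1}(f_{t-1}) = u_t(1)$. Since the final remark in \cref{uDefn} records that $u_t(1) = h_t(1)$, everything reduces to showing
\[ h_{t-1}(f_{t-1}) = h_t(1) . \]

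The key step is the geometric observation that the triangles $T_{t-1}$ and~$T_t$ share the edge from $(0,0)$ to~$X_t$. By \cref{XtIsLatt} the points $X_0, X_1, \ldots, X_I$ are listed along the segment from $(n,0)$ to $(e,m)$ in order of strictly increasing slope (as seen from the origin), so the primitive lattice points of $T_{t-1}$ sweep from the slope of~$X_{t-1}$ up to the slope of~$X_t$, while those of~$T_t$ sweep from the slope of~$X_t$ up to the slope of~$X_{t+1}$. Because each ray through the origin contains a unique primitive lattice point, the largest-slope primitive point of~$T_{t-1}$ (namely $A_{t-1}(f_{t-1})$) and the smallest-slope primitive point of~$T_t$ (namely $A_t(1)$) are both exactly the primitive lattice point lying on the shared ray from $(0,0)$ through~$X_t$; hence $A_{t-1}(f_{t-1}) = A_t(1)$. (One can see this concretely in \cref{uDefnEg}: there $A_0(6) = A_1(1) = (7,2)$ and $A_1(6) = A_2(1) = (3,2)$.)

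From here the conclusion is immediate, since the formula for $h_s(k)$ in \cref{uDefn} depends only on the coordinates of the primitive point $A_s(k) = (x_k, y_k)$ and on the fixed parameters $m, n, e$, not on~$s$ itself; thus $A_{t-1}(f_{t-1}) = A_t(1)$ forces $h_{t-1}(f_{t-1}) = h_t(1) = u_t(1)$, and substituting into the displayed consequence of \cref{vertices} yields the statement. I expect the only real point requiring care to be the justification that the extreme primitive points of adjacent triangles coincide: one must confirm that the minimal slope in~$T_t$ is attained on the boundary edge toward~$X_t$ (so that $A_t(1)$ really is the primitive point on that ray rather than an interior point), which follows because a point of that minimal slope necessarily lies on the ray through~$X_t$, i.e.\ on the shared edge of the two triangles.
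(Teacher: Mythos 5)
Your argument is correct and is essentially identical to the paper's proof: both reduce the identity via \cref{vertices} (with $t$ replaced by $t-1$) to the observation that $A_{t-1}(f_{t-1}) = A_t(1)$ is the primitive lattice point on the common edge of $T_{t-1}$ and~$T_t$, whence $h_{t-1}(f_{t-1}) = h_t(1) = u_t(1)$. You simply spell out the geometric justification of that coincidence in more detail than the paper does.
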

	
	\begin{proof}
		Note that $A_{t-1}(f_{t-1}) = A_t(1)$ (because this is the primitive lattice point on the edge that $T_{t-1}$ and $T_t$ have in common), so we have $h_{t-1}(f_{t-1}) = h_t(1)$. Since $h_t(1) = u_t(1)$ (by the definition of $u_t(k)$), the desired conclusion is now immediate from \cref{vertices} (after replacing~$t$ with~$t-1$).
	\end{proof}
	
	\begin{cor} \label{parity}
		Assume $0 \le t, t' < |G : \langle a - b \rangle|$, $1 \le k < f_t$, and $1 \le k' \le f_{t'}$.
		\begin{enumerate}
			\item \label{parity-odd}
			If $o(a - b)$ is odd, then $u_t(k) \equiv u_{t'}(k') \pmod{2}$.
			\item \label{parity-even}
			If $o(a - b)$ is even, then $u_t(k) - u_{t'}(k') \equiv t - t' \pmod{2}$.
		\end{enumerate}
	\end{cor}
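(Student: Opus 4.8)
The plan is to reduce the whole statement to the single relation recorded in \cref{ut+ut}, after first making the elementary observation that, for a fixed coset index~$t$, the parity of $u_t(k)$ does not depend on~$k$. This is immediate from the recursion $u_t(k) = u_t(k-1) + 2\,h_t(k)$ (see \cref{uDefn}): increasing~$k$ changes $u_t$ only by the even amount $2\,h_t(k)$, so $u_t(k) \equiv u_t(1) \pmod 2$ for every~$k$ in the relevant range. (Reading $u_t(f_t)$ through the same recursion, the congruence also holds at $k=f_t$, which accounts for the non-strict upper endpoint $k' \le f_{t'}$ permitted in part~\ref{parity-even}.) Consequently the entire assertion depends only on the single quantity $\varepsilon_t$, the residue of $u_t(1)$ modulo~$2$, and it suffices to understand how $\varepsilon_t$ varies with~$t$.

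Next I would extract the one-step parity relation between consecutive cosets. For $0 < t < |G : \langle a-b\rangle|$, \cref{ut+ut} gives $u_{t-1}(f_{t-1}-1) = o(a-b) - 1 - u_t(1)$. Since the left-hand side has the same parity as $u_{t-1}(1)$ by the previous paragraph, this yields
\[ u_{t-1}(1) + u_t(1) \equiv o(a-b) - 1 \pmod 2, \]
that is, $\varepsilon_{t-1} + \varepsilon_t \equiv o(a-b) - 1 \pmod 2$ for every such~$t$.

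Finally I would split on the parity of $o(a-b)$ and iterate. If $o(a-b)$ is odd, then $o(a-b)-1$ is even, so $\varepsilon_t = \varepsilon_{t-1}$ for all~$t$; hence $\varepsilon_t$ is constant in~$t$, and combined with the $k$-independence this gives $u_t(k) \equiv u_{t'}(k') \pmod 2$ for all admissible indices, proving part~\ref{parity-odd}. If $o(a-b)$ is even, then $o(a-b)-1$ is odd, so $\varepsilon_t \equiv \varepsilon_{t-1} + 1 \pmod 2$; telescoping from $t'$ to~$t$ gives $\varepsilon_t - \varepsilon_{t'} \equiv t - t' \pmod 2$, and using the $k$-independence once more yields $u_t(k) - u_{t'}(k') \equiv t - t' \pmod 2$, which is part~\ref{parity-even}.

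This is essentially a routine parity chase, so I do not expect a genuine obstacle; the only points requiring care are (a) confirming that the parity of $u_t(k)$ is truly independent of~$k$ across the allowed ranges, including the asymmetric endpoint $k' = f_{t'}$, and (b) applying \cref{ut+ut} (valid only for $0 < t < |G:\langle a-b\rangle|$) to link the parities of all cosets by induction starting from $t=0$. With those in place, the case analysis on the parity of $o(a-b)-1$ produces the two stated congruences directly.
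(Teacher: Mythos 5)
Your argument is correct and is essentially the paper's own proof: both first note from the recursion $u_t(k) = u_t(k-1) + 2h_t(k)$ that the parity of $u_t(k)$ depends only on~$t$, then apply \cref{ut+ut} to see that consecutive values of~$t$ give the same parity exactly when $o(a-b)$ is odd. Your write-up merely spells out the telescoping and the endpoint $k' = f_{t'}$ more explicitly than the paper does.
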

	
	\begin{proof}
		It is immediate from \fullcref{uDefn}{utk} that the parity of~$u_t(k)$ depends only on~$t$. Then we see from \cref{ut+ut} that $u_{t-1}(*)$ has the same parity as $u_t(*)$ if and only if $o(a - b)$ is odd.
	\end{proof}
	
	In the remainder of this section, we will give two consequences of \cref{HPiff}. They are stated in \cite{Curran} only for the special case where $\cayd(G; a,b)$ is the Cartesian product of two directed cycles, but the same arguments apply to the general case. For completeness, we provide proofs.
	
	\begin{prop}[Curran-Witte] \label{string}
		Assume that $m$ and~$n$ are as in \cref{mneDefn}.
		If $m \ge 2$, then there exists $n' \in \{ n - 1, n \}$, such that, for each $i$ in
		\[ \{ n', n' + 2, n' + 4, \ldots, n' + 2 \lfloor (n - 1)/2 \rfloor \} , \]
		there is a hamiltonian path~$P_i$ in $\cayd(G; a,b)$, such that $\delta_b(P_i) = i$.
	\end{prop}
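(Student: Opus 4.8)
The plan is to read the admissible numbers of $b$-edges directly off the Curran--Witte classification. As in the proof of \cref{samedelta}, $\delta_b(H_t(d)) = t\, o(a-b) + d$, and \cref{HPiff} says $H_t(d)$ is a hamiltonian path exactly when $d = u_t(k)$ for some $k \in \{1,\ldots,f_t-1\}$. So, by \cref{samedelta}, it suffices to realize each target value~$i$ in the form $t\, o(a-b) + u_t(k)$. The key structural observation is that $|G : \langle a-b\rangle| = \gcd(n-e,m)$ divides~$m$ (\cref{XtIsLatt}), whence $o(a-b) = mn/|G:\langle a-b\rangle| = n\cdot\bigl(m/|G:\langle a-b\rangle|\bigr)$ is an integer multiple of~$n$; in particular $o(a-b) = n$ or $o(a-b) \ge 2n$. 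Since every target value lies in $[\,n-1,\ 2n\,)$, this dichotomy determines which values of~$t$ are relevant, and I expect it to drive the whole proof.

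First I would treat the generic case $o(a-b) \ge 2n$. Here the entire target interval lies in $[0,o(a-b))$, so I use only $t=0$ and take $n' = n-1$. By \fullcref{uDefn}{utk} and the observation in \cref{uDefn} that $u_0(1) = h_0(1) = n-1$, the $t=0$ hamiltonian paths realize exactly the values
\[ n-1,\quad (n-1)+2h_0(2),\quad (n-1)+2\bigl(h_0(2)+h_0(3)\bigr),\ \ldots, \]
so the required string $n-1, n+1, \ldots, (n-1)+2\lfloor(n-1)/2\rfloor$ appears the moment $h_0(2) = h_0(3) = \cdots = h_0(\lfloor(n-1)/2\rfloor+1) = 1$. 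To check this I would introduce the linear functional $\ell(x,y) = mx + (n-e)y$, which is constant $=mn$ on the far side of every~$T_t$ (as $\ell(n,0) = \ell(e,m) = mn$); then the closed form in \cref{uDefn} reads $h_0(k) = \lfloor(mn-1)/\ell(A_0(k))\rfloor$, so $h_0(k)=1$ is equivalent to $\ell(A_0(k)) > (mn-1)/2$, i.e.\ to $A_0(k)$ lying in the outer half of~$T_0$. The geometric claim to establish is that the first $\lfloor(n-1)/2\rfloor$ primitive points after $A_0(1)=(1,0)$, in order of increasing slope, are the points $(x,1)$ with $x$ descending from $n-(n-e)/m$, and that all of them satisfy $\ell > (mn-1)/2$.

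In the remaining case $o(a-b) = n$ (equivalently $|G:\langle a-b\rangle| = m$), the triangle~$T_0$ degenerates: its only primitive points are $(1,0)$ and the apex~$X_1$ (which lies on the far side), so $f_0 = 2$ and $t=0$ contributes only $n-1$. Here I take $n'=n$ and shift to $t=1$. \Cref{ut+ut} with $t=1$, together with $u_0(f_0-1) = u_0(1) = n-1 = o(a-b)-1$, forces $u_1(1)=0$, so $H_1(0)$ is a hamiltonian path with $\delta_b = o(a-b)+0 = n$; this anchors the string at~$n$, and the same outer-half analysis carried out in~$T_1$ then yields $n, n+2, \ldots$ once $h_1(2)=\cdots=1$ over the required range. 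Throughout, \cref{parity} supplies the bookkeeping confirming that the values in the string share a single parity and that the chosen $n'\in\{n-1,n\}$ is the correct endpoint.

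The main obstacle is the lattice-point count underlying the geometric claim: showing that at least $\lfloor(n-1)/2\rfloor$ of the lowest-slope primitive points of~$T_0$ (resp.\ $T_1$) fall in the outer half $\ell > (mn-1)/2$, with no primitive point whose multiples reach the inner half interrupting the run. The delicate point is the boundary of the outer half, where the running index reaches $x\approx n/2$ and its slope approaches that of the height-$2$ lattice points of~$T_0$. I expect this to reduce to a short but careful estimate: comparing $n-(n-e)/m$ with $n/2$, verifying that every lattice point $(x',y')$ with $y'\ge 2$ in the triangle has slope exceeding that of the $\lfloor(n-1)/2\rfloor$-th height-$1$ point (since such points satisfy $x' \le n - 2(n-e)/m < n$, forcing slope $>2/n$), and pinning down the floor functions so that the count is genuinely $\ge \lfloor(n-1)/2\rfloor$ rather than merely asymptotically~$n/2$.
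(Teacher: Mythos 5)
Your proposal is correct and follows essentially the same route as the paper's proof: your dichotomy $o(a-b)=n$ versus $o(a-b)\ge 2n$ is equivalent to the paper's case split on whether the side of $T(m,n,e)$ opposite the origin contains a lattice point at height~$1$, and in each case the paper realizes the string exactly as you describe, by showing that the height-$1$ primitive points $(x,1)$ with $x_2/2 < x \le x_1$ are the lowest-slope primitive points of $T_0$ (resp.\ $T_1$) after $(1,0)$ and each has $h=1$. The ``outer half'' estimate you defer is precisely the inequality $x > x_2/2$ (equivalently $2(x,1)\notin T$) that the paper verifies directly.
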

	
	\begin{proof}[Proof \textup{(cf.\ \cite[(9.2) on p.~67, and Case~3 on p.~69]{Curran})}]
		For convenience, we let $T = T(m,n,e)$.
		Let $x_1,x_2 \in \R$, such that $(x_1,1)$ and $(x_2, 2)$ are on the line segment from $(n,0)$ to $(e,m)$. (This line segment is the side of~$T$ that is opposite the vertex $(0,0)$.) Then $(x_1, 1)$ is the midpoint of the line segment from $(n,0)$ to $(x_2, 2)$, so 
		\[ x_1 = \frac{x_2}{2} + \frac{n}{2} . \]
		
		We consider two cases, but the argument is almost the same for both.
		
		\refstepcounter{startcases}
		\begin{case} \label{stringPf-x1notinZ}
			Assume $x_1 \notin \Z$.
		\end{case}
		For convenience, let $x_1^* = \lfloor x_1 \rfloor$, and let 
		\[ I = \{\, x_1^*, x_1^* - 1, x_1^* - 2, \ldots, x_1^* - \bigl( \lfloor n/2 \rfloor - 1 \bigr) \,\} . \]
		For all $x \in I$, it is obvious that $x \le x_1^* < x_1$. Also, we have 
		\[ x 
		\ge x_1^* - \bigl( \lfloor n/2 \rfloor - 1 \bigr)
		> (x_1 - 1) - \bigl( (n/2) - 1 \bigr)
		= \frac{x_2}{2}
		.\]
		So the set $\{\, (x, 1) \mid x \in I\,\}$ is contained in the interior of~$T$, and the slope of each point in this set is strictly smaller than the slope of any nonzero lattice point in~$T$ that is not in this set, other than the points $(1,0), (2,0), \ldots, (n,0)$ that are on the $x$-axis (and therefore have slope~$0$). 
		This implies that $(x,1)$ is in~$T_0$ for all $x \in I$. More precisely, since it is obvious that the lattice point $(x,1)$ is primitive, we have
		\[ \{\, A_0(k) \mid 2 \le k \le \lfloor n/2 \rfloor + 1 \,\} = \{\, (x, 1) \mid x \in I\,\} . \]
		Since $2x > x_2$ for all $x \in I$, this implies $2A_0(k) \notin T$, so
		\[ \text{$h_0(k) = 1$ for $2 \le k \le \lfloor n/2 \rfloor + 1$} . \]
		By the definition of $u_t(k)$, we then have 
		\[ u_0(k) - u_0(k-1) = 2 h_0(k) = 2 \]
		for these values of~$k$, so
		\begin{align} \label{string-u0}
			\{\, u_0(k) & \mid 1 \le k \le \lfloor n/2 \rfloor + 1 \,\}
			\\&= \{ n-1, n+1, n + 3\ldots, n - 1 + 2 \lfloor n/2 \rfloor \}
			\notag
		\end{align}
		Hence, \cref{HPiff} tells us that $H_0(d)$ is a hamiltonian path for all $d$ in this set.
		Since 
		\[ \delta_b \bigl( H_0(d) \bigr)
		= 0 \cdot o(a - b) + d
		= d ,\]
		and $2 \lfloor n/2 \rfloor \ge 2 \lfloor (n - 1)/2 \rfloor$, this implies that we may let $n' = n-1$ and $P_i = H_0(i)$.
		
		\begin{case}
			Assume $x_1 \in \Z$.
		\end{case}
		This means that $X_1 = (x_1,1)$.
		Let
		\[ I = \{\, x_1, x_1 - 1, x_1 - 2, \ldots, x_1 - \lfloor (n-1)/2 \rfloor \,\} . \]
		For all $x \in I$, we have 
		\[ x \ge x_1 - \lfloor (n-1)/2 \rfloor > x_1 - (n/2) = \frac{x_2}{2} , \]
		so, as in the previous case, the points in $\{\, (x, 1) \mid x \in I \,\}$ are in~$T$, and have strictly smaller slope than any other lattice points in~$T$, other than the lattice points on the $x$-axis. Since $(x_1,1) = X_1$, this implies $(x,1)$ is in~$T_1$. More precisely, we have
		\[ \{\, A_1(k) \mid 1 \le k \le \lfloor (n-1)/2 \rfloor + 1 \,\} = \{\, (x,1) \mid x \in I \,\} . \]
		Since $(x,1)$ is primitive (and $(x_1,1)$ is on the boundary of~$T$, not in the interior), this implies that
		\[ \displaystyle h_0(k) = 
		\begin{cases}
			0 & \text{if $k = 1$} \\
			1 & \text{otherwise}
		\end{cases}
		\quad 
		\text{for $1 \le k \le \lfloor (n-1)/2 \rfloor + 1$}
		. \]
		Therefore $H_1(d)$ is a hamiltonian path for all $d$ in
		\[ \{\, u_1(k) \mid 1 \le k \le \lfloor (n-1)/2 \rfloor + 1 \,\} = \{ 0, 2, 4, \ldots, 2 \lfloor (n-1)/2 \rfloor \} . \]
		Note that, since the $y$-coordinate of $X_1 = (x_1, 1)$ is~$1$, it follows from the definition in \cref{XtDefn} that $m = |G : \langle a - b \rangle|$, so
		\[ o(a - b) 
		= \frac{|G|}{|G : \langle a - b \rangle|}
		= \frac{mn}{m}
		= n . \]
		Therefore
		\[ \delta_b \bigl( H_1(d) \bigr)
		= 1 \cdot o(a - b) + d
		= 1 \cdot n + d
		= n + d .\]
		Therefore, we may let $n' = n$ and $P_i = H_1(i)$.
	\end{proof}
	
	\begin{prop}[Curran-Witte, cf.\ {\cite[(9.5) on p.~68]{Curran}}] \label{dgap}
		Assume that $m$, $n$, and~$e$ are as in \cref{mneDefn}. Also assume $m \ge 2$ and $n \ge e$.
		
		For every integer~$k$, such that $n - 1 \le k \le m(n-1)$, there is a hamiltonian path~$P$ in $\cayd(G; a,b)$, such that
		\[ k \le \delta_b(P) \le k + 2 \left\lfloor \frac{mn - 1}{m + n - e} \right\rfloor . \]
		Therefore, if $n \ge m + e$, then 
		\[ k \le \delta_b(P) \le k + 2 \left\lfloor \frac{n - 1}{2} \right\rfloor. \]
	\end{prop}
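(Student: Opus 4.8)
The plan is to pin down exactly which values $\delta_b(P)$ occur among the hamiltonian paths $H_t(d)$, and then show these values are spaced closely enough to meet every window $[k,\,k+2M]$, where $M = \lfloor (mn-1)/(m+n-e)\rfloor$. Combining \cref{HPiff} with the identity $\delta_b\bigl(H_t(d)\bigr) = t\,o(a-b)+d$, the set of realized $\delta_b$-values is
\[ S = \{\, t\,o(a-b) + u_t(k) \mid 0 \le t < I,\ 1 \le k \le f_t - 1 \,\}, \qquad I = |G : \langle a-b\rangle| . \]
It then suffices to prove three facts: $\min S = n-1$, $\max S \ge m(n-1)$, and consecutive elements of $S$ differ by at most $2M+1$. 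Granting these, for an integer $k$ with $n-1 \le k \le m(n-1)$ I take $s$ to be the least element of $S$ with $s \ge k$ (it exists because $k \le m(n-1) \le \max S$); if $k = n-1 = \min S$ then $s = k$, and otherwise the predecessor $p < k$ of $s$ in~$S$ satisfies $p \le k-1$, so $s \le p + (2M+1) \le k + 2M$. Either way the path attaining $s$ has $k \le \delta_b \le k + 2M$.

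The crux is the gap bound. I would first record that $f_t \ge 2$ for every~$t$: the smallest- and largest-slope primitive points of $T_t$ lie on the two distinct rays from the origin through $X_t$ and~$X_{t+1}$, which are not collinear with the origin, so they are distinct; hence no triangle contributes zero values and there is nothing to jump over. Within a triangle, consecutive values differ by $u_t(k)-u_t(k-1) = 2h_t(k)$ by \cref{uDefn}; across the boundary from $T_{t-1}$ to~$T_t$, \cref{ut+ut} makes the last value of $T_{t-1}$ equal to $o(a-b)-1-u_t(1)$, so that transition gap equals $2u_t(1)+1 = 2h_t(1)+1$. In every case the relevant primitive point $A_t(k)=(x,y)$ has $x,y \ge 1$: for $t \ge 1$ both $X_t$ and $X_{t+1}$ have positive coordinates, so all primitive points of $T_t$ do too, while for $t=0$ the only primitive point with a vanishing coordinate is $(1,0) = A_0(1)$, which supplies the endpoint value $u_0(1)=n-1$ and never bounds an interior gap. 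With $x,y\ge1$ the formula in \cref{uDefn} gives $h_t(k) = \lfloor (mn-1)/(mx+(n-e)y)\rfloor \le \lfloor (mn-1)/(m+n-e)\rfloor = M$ (using $n \ge e$), so every gap is at most $2M+1$.

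For the endpoints: $u_0(1) = h_0(1) = n-1$, while every value with $t \ge 1$ is at least $o(a-b) = mn/|G:\langle a-b\rangle| \ge n$ (by \cref{XtIsLatt}, $|G:\langle a-b\rangle| = \gcd(n-e,m) \le m$), so $\min S = n-1$. Applying \cref{vertices} with $t = I-1$ yields $\max S = mn - 1 - h_{I-1}(f_{I-1})$, and since the largest-slope primitive point (on the ray to $(e,m)$) has $h \le m-1$, we get $\max S \ge m(n-1)$. Finally, the ``therefore'' clause follows because $n \ge m+e$ gives $m+n-e \ge 2m$ and $\lfloor (n-1)/2\rfloor + 1 \ge n/2$, whence $\bigl(\lfloor (n-1)/2\rfloor + 1\bigr)(m+n-e) \ge mn > mn-1$, so that $M \le \lfloor (n-1)/2\rfloor$.

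The main obstacle I anticipate is exactly this uniform gap bound: I must handle the parity mismatch at triangle boundaries (the transition gap is the \emph{odd} number $2h_t(1)+1$, not an even $2h$) and verify that the two ``bad'' primitive points carrying a large $h$-value --- the slope-$0$ point $(1,0)$ and, when $e=0$, the vertical point $(0,1)$ --- occur only as the two extreme elements of the ordered list~$S$ and hence never create an interior gap. Confirming $f_t \ge 2$, so that no triangle is skipped and produces an uncontrolled jump, is the other point that needs care.
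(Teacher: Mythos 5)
Your proposal is correct and follows essentially the same route as the paper's proof: identify the realized values $t\,o(a-b)+u_t(k)$, bound the within-triangle gaps by $2h_t(k)\le 2\lfloor(mn-1)/(m+n-e)\rfloor$ and the cross-triangle gaps by $2h_t(f_t)+1$ via \cref{vertices} and the shared boundary point $A_t(f_t)=A_{t+1}(1)$, and check the endpoints $u_0(1)=n-1$ and $mn-1-h(f)\ge m(n-1)$. Your extra verifications (that $f_t\ge 2$ and that the points $(1,0)$ and, when $e=0$, $(0,1)$ only occur at the extremes) are details the paper leaves implicit, not a different argument.
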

	
	\begin{proof}
		First, note that if $n \ge m + e$, then 
		\[ \frac{mn - 1}{m + n - e}
		\le \frac{mn - 1}{2m} 
		< \frac{mn}{2m} 
		= \frac{n}{2} , \]
		so
		\[ \left\lfloor \frac{mn - 1}{m + n - e} \right\rfloor
		\le \left\lfloor \frac{n - 1}{2} \right\rfloor
		. \]
		Therefore, the final sentence of the proposition follows from the one that precedes it.
		
		For convenience, let $N = o(a - b)$ and $M = |G|/N - 1$. In this notation, we have
		\begin{align*}
			\{\, \delta_{b}(P) \mid & \text{$P$ is a hamiltonian path} \,\}
			\\&= \{ u_0(1), u_0(2), \ldots, u_0(f_0-1),
			\\& \qquad N + u_1(1), N + u_1(2), \ldots, N + u_1(f_1-1),
			\\& \qquad 2N + u_2(1), 2N + u_2(2), \ldots, 2N + u_2(f_2-1),
			\\& \qquad \qquad \qquad  \vdots 
			\\& \qquad MN + u_M(1), MN + u_M(2), \ldots, MN + u_M(f_M-1) \}
			. \end{align*}
		Since $u_0(1) = n - 1$ and
		\begin{align*}
			MN + u_M(f_M - 1) 
			&= \bigl(|G| - N \bigr) + \bigl( N - 1 - h_M(f) \bigr)
			\\&= mn - 1 - h_M(f)
			\ge mn - m
			= m(n-1)
			, \end{align*}
		the assertion we wish to prove is that the difference between successive terms in this list is never larger than $2 \lfloor (mn - 1)/(m + n - e) \rfloor + 1$. It therefore suffices to show 
		\begin{align} \label{dgapPf-ugap}
			\text{$u_t(k) - u_t(k-1) \le 2 \left\lfloor \frac{mn - 1}{m + n - e} \right\rfloor + 1$ for $2 \le k < f_t$}
		\end{align}
		and 
		\begin{align} \label{dgapPf-intergap}
			\text{$N + u_{t+1}(1) - u_t(f_t - 1) \le 2 \left\lfloor \frac{mn - 1}{m + n - e} \right\rfloor + 1$ for $0 \le t < M$}
			. \end{align}
		
		To establish~(\ref{dgapPf-ugap}), note that (much as in \cite[Case~1 on p.~67]{Curran}) we have
		\begin{align*}
			u_t(k) - u_t(k-1) 
			&= 2 h_k 
			\\&= 2 \left\lfloor \frac{mn - 1}{m \, x_k + (n - e) y_k} \right\rfloor 
			&& \text{(where $x_k, y_k \in \Z^+$)}
			\\&\le 2 \left\lfloor \frac{mn - 1}{m + n - e} \right\rfloor
			&& \begin{pmatrix} n - e \ge 0 \\ \text{and $x_k, y_k \ge 1$} \end{pmatrix}
			. \end{align*}
		
		To establish~(\ref{dgapPf-intergap}), we make three observations. First, note that, by definition, we have $u_{t+1}(1) = h_{t+1}(1)$. Second, we know from \cref{vertices} that 
		\[ u_t(f_t - 1) = N - 1 - h_t(f) .\]
		Third, we have $A_t(f_t) = A_{t+1}(1)$. (This is the primitive lattice point on the boundary between $T_t$ and~$T_{t + 1}$.) So
		\[ h_{t+1}(1) 
		= h_t(f) 
		= \left\lfloor \frac{mn - 1}{m x_f + (n - e) y_f} \right\rfloor
		\le \left\lfloor \frac{mn - 1}{m + n - e} \right\rfloor
		. \]
		Therefore
		\begin{align*} N &+ u_{t+1}(1) - u_t(f_t - 1)
			\\&= N + h_t(f) - \bigl( N - 1 - h_t(f) \bigr)
			\\&= 2 h_t(f) + 1
			\\&\le 2 \left\lfloor \frac{mn - 1}{m + n - e} \right\rfloor + 1
			. \qedhere \end{align*}
	\end{proof}
	
	To provide a convenient reference, we now restate \cref{dgap} for the special case where $\cayd(G; a,b)$ is the Cartesian product of two directed cycles:
	
	\begin{cor} \label{dgapCartProd}
		Let $\{e_1, e_2\}$ be the standard generating set of $\Z_m \times \Z_n$, and assume $2 \le m \le n$. If $1 \le k \le m(n - 1)$, then there is a hamiltonian path~$P$ in $\cayd(\Z_m \times \Z_n; e_1, e_2)$, such that
		\[ k \le \delta_{e_2}(P) \le k + 2 \left\lfloor \frac{n - 1}{2} \right\rfloor . \]
	\end{cor}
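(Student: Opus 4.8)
The plan is to recognize \cref{dgapCartProd} as the special case of \cref{dgap} in which $\cayd(G;a,b)$ is the Cartesian product of two directed cycles, and then to patch the one small gap in the range of~$k$. First I would use \cref{CxCIsCay} to write $\cayd(\Z_m \times \Z_n; e_1, e_2)$ as $\cayd(G;a,b)$ with $a = e_1 = (1,0)$ and $b = e_2 = (0,1)$, and compute the quantities of \cref{mneDefn} in this situation: since $o(a) = m$ and $|G : \langle a \rangle| = n$, the parameters ``$m$'' and ``$n$'' of \cref{mneDefn} agree with the $m$ and~$n$ of the \lcnamecref{dgapCartProd}, and since $nb = (0,0) = 0 \cdot a$ we get $e = 0$. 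The hypotheses ``$m \ge 2$'' and ``$n \ge e$'' of \cref{dgap} then hold (as $e = 0$), and the additional hypothesis ``$n \ge m + e$'' needed for its final sentence is exactly the assumption $m \le n$. Hence the final sentence of \cref{dgap} supplies, for every integer~$k$ with $n - 1 \le k \le m(n-1)$, a hamiltonian path~$P$ with $k \le \delta_{e_2}(P) \le k + 2\lfloor (n-1)/2 \rfloor$, which is precisely the desired conclusion on that range of~$k$.

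It remains only to treat the values $1 \le k \le n-2$ lying below the range covered by \cref{dgap}. For these I would use a single hamiltonian path~$P_0$ with the minimum possible number of $e_2$-edges, namely $\delta_{e_2}(P_0) = n-1$. Such a~$P_0$ exists because $u_0(1) = n-1$ (see \cref{uDefn}), so $H_0(n-1)$ is a hamiltonian path by \cref{HPiff}; alternatively one can exhibit~$P_0$ explicitly by traversing the digraph one $\Z_m$-row at a time, using $m-1$ consecutive $e_1$-edges within each row and a single $e_2$-edge to pass to the next row. For any~$k$ with $1 \le k \le n-2$ we then have $k \le n-1 = \delta_{e_2}(P_0)$, which is the lower bound, while the inequality $2\lfloor (n-1)/2 \rfloor \ge n-2$ together with $k \ge 1$ gives $\delta_{e_2}(P_0) = n-1 \le k + 2\lfloor (n-1)/2 \rfloor$, the upper bound. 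Thus the same window $[k,\, k + 2\lfloor (n-1)/2 \rfloor]$ contains $\delta_{e_2}(P_0)$ for every such~$k$, and $P_0$ serves as the required path.

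The substance of the argument is carried entirely by \cref{dgap}, so there is no serious obstacle; the only point requiring care is the lower end of the range, where \cref{dgap} stops at $k = n-1$ but the \lcnamecref{dgapCartProd} asks for~$k$ as small as~$1$. The observation that resolves this is that no hamiltonian path can have fewer than $n-1$ edges of type~$e_2$: the $\delta_{e_2}$ edges split the path into $\delta_{e_2}+1$ maximal runs of $e_1$-edges, each such run lies in a single $\Z_m$-row and so covers at most~$m$ vertices, whence $mn \le m(\delta_{e_2}+1)$ and $\delta_{e_2} \ge n-1$. Consequently the single extremal path with $\delta_{e_2} = n-1$ simultaneously fits every window with $k \le n-2$. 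The one routine check I would verify is the inequality $2\lfloor (n-1)/2 \rfloor \ge n-2$, which holds for both parities of~$n$.
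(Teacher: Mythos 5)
Your proposal is correct and follows essentially the same route as the paper's proof: invoke the final sentence of \cref{dgap} (with $e = 0$, so $m \le n$ gives $n \ge m + e$) for $k \ge n-1$, and cover $1 \le k \le n-2$ with the single path $H_0(n-1)$, using $n - 1 - k \le n - 2 \le 2\lfloor (n-1)/2 \rfloor$. The extra justification that $n-1$ is the minimum possible value of $\delta_{e_2}$ is not needed but is harmless.
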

	
	\begin{proof}
		We have $e = 0$, so the assumption that $m \le n$ implies $n \ge m + e$.  Therefore, if $k \ge n-1$, then the desired conclusion is obtained from the final sentence in the statement of \cref{dgap}.
		
		So we may now assume $k < n-1$. Then, since 
		\[ n - 1 - k \le n - 2 \le 2 \lfloor (n-1)/2 \rfloor , \]
		we may let $P = H_0(n-1)$. (Since $u_0(1) = n - 1$, we know that $H_0(n-1)$ is a hamiltonian path. We also have $\delta_{e_2} \bigl( H_0(n-1) \bigr) = n-1$.)
	\end{proof}


	\section{Cartesian product of two directed cycles} \label{C1xC2Sect}
	
	In this section, we show that the Cartesian product of two directed cycles has two arc-disjoint hamiltonian paths. We start with the following lemma.
	
	\begin{lem} \label{makedisjoint}
		Let $\{a,b\}$ be a\/ $2$-element generating set of a finite abelian group~$G$, and let $k$ and~$\ell'$ be integers such that 
		\[ \text{$0 \le k,\ell' < |G|$ and $|k - \ell'| \le 1$.} \] 
		Then there exist two arc-disjoint spanning quasi-paths $P$ and $P'$ in $\cayd(G; a,b)$, such that $\delta_b (P) = k$ and $ \delta_a(P') = \ell'$.
	\end{lem}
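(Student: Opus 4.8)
The plan is to build $P$ first and then obtain $P'$ by reversing the travel-direction of $P$ at essentially every vertex. Concretely, use \cref{anydelta} to choose a spanning quasi-path $P$ with $\delta_b(P)=k$, and let $\iota,\tau$ be its initial and terminal vertices and $N=o(a-b)$. Define a spanning subdigraph $\overline{P}$ in which every vertex $v\neq\tau$ travels by the generator it does \emph{not} use in~$P$, while $\tau$ is assigned a travel-direction $s_\tau\in\{a,b\}$ to be chosen. Every vertex then has out-degree~$1$ in~$\overline{P}$, and since $P$ and $\overline{P}$ use the two different out-edges at each $v\neq\tau$, the edge sets satisfy $E(P)\sqcup E(\overline{P})=\{(v,v+a),(v,v+b)\mid v\neq\tau\}\cup\{(\tau,\tau+s_\tau)\}$. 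The quasi-path $P'$ will be $\overline{P}$ with a single edge deleted, and arc-disjointness is then automatic: $P$ and $\overline{P}$ disagree at every vertex that has an out-edge in both, and the only vertices lacking an out-edge are $\tau$ (in $P$) and the tail of the deleted edge (in $P'$).

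To see that one deletion makes $\overline P$ a quasi-path, I would read off its in-degrees from the displayed edge decomposition. For each $w$, exactly the in-edges $(w-a,w)$ and $(w-b,w)$ whose tail is not~$\tau$ survive, and the single new edge lands on $\tau+s_\tau$; combined with $\deg^-_P(w)=1-[w=\iota]$ this gives
\[ \deg^-_{\overline{P}}(w)=1+[w=\iota]-[w=\tau+a]-[w=\tau+b]+[w=\tau+s_\tau], \]
where $[\,\cdot\,]$ is $1$ if its statement holds and $0$ otherwise. Thus for $s_\tau=a$ the digraph $\overline{P}$ has in-degree~$1$ everywhere except in-degree~$2$ at $\iota$ and in-degree~$0$ at $\tau+b$ (and symmetrically, with $\tau+a$, for $s_\tau=b$). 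Both in-edges of~$\iota$ are therefore present, and deleting one of them restores $\deg^-(\iota)=1$ while creating a single out-degree-$0$ vertex; by \cref{quasi}, the resulting $P'$ is a spanning quasi-path.

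It remains to hit the prescribed value $\ell'$, which, being an integer with $|k-\ell'|\le 1$, lies in $\{k-1,k,k+1\}$. Since $\overline{P}$ travels by~$a$ exactly at the $k$ vertices where $P$ travels by~$b$, together with $\tau$ when $s_\tau=a$, we have $\delta_a(\overline{P})=k+[s_\tau=a]$; deleting an $a$-edge lowers $\delta_a$ by~$1$, while deleting a $b$-edge leaves it unchanged. As the two in-edges of~$\iota$ consist of one $a$-edge (from $\iota-a$) and one $b$-edge (from $\iota-b$), each target is reachable: take $s_\tau=a$ and delete the $b$-edge for $\ell'=k+1$; take $s_\tau=a$ and delete the $a$-edge (or $s_\tau=b$ and delete the $b$-edge) for $\ell'=k$; and take $s_\tau=b$ and delete the $a$-edge for $\ell'=k-1$.

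The step requiring the most care, and the main obstacle, is when the in-degree defect collapses, i.e.\ when $\iota=\tau+b$ (for $s_\tau=a$) or $\iota=\tau+a$ (for $s_\tau=b$), so that $\overline{P}$ is a disjoint union of directed cycles rather than having a genuine defect at~$\iota$. By \fullcref{arcforcing}{terminal-iota} we have $\iota=\tau+a+d(a-b)$, where $d$ is the number of terminal-coset vertices that travel by~$b$ in~$P$; hence these collapses occur exactly when $d=N-1$ and $d=0$, respectively. In that situation I would simply delete a single edge of the required type — a $b$-edge when $\ell'=k+1$ (one exists since then $\delta_b(\overline{P})=|G|-\ell'\ge 1$) and an $a$-edge when $\ell'=k-1$ (one exists since then $\delta_a(\overline{P})=k\ge 1$) — which breaks one cycle into a path, again yielding a spanning quasi-path by \cref{quasi} with the correct value of $\delta_a(P')$, still arc-disjoint from~$P$ because the deleted edge is not at~$\tau$. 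Since $N\ge 2$, the values $d=0$ and $d=N-1$ cannot coincide, so for the target $\ell'=k$ a non-collapsing choice of $s_\tau$ is always available; this confirms that every $\ell'$ permitted by the hypothesis is attained.
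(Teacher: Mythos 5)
Your proof is correct and follows essentially the same route as the paper's: both start from \cref{anydelta}, pass to the complement of a spanning quasi-path $P$ with $\delta_b(P)=k$, repair its degree defects by deleting edges whose generator-type is chosen to hit $\ell'$, and treat separately the degenerate case where there is a directed edge from $\tau$ to $\iota$ (your ``collapse,'' the paper's Case~II). The only difference is bookkeeping — you pre-select one out-edge at $\tau$ and delete a single in-edge of $\iota$, whereas the paper keeps both out-edges of $\tau$ and deletes two edges.
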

	
	\begin{proof}
		By \cref{anydelta}, there is a spanning quasi-path~$P$, such that $\delta_b(P) = k$. Let $\iota$ and~$\tau$ be the initial vertex and terminal vertex of~$P$, respectively.
		
		Let $\overline P$ be the complement of~$P$ in $\cayd(G; a,b)$. (Thus, a subdigraph of $\cayd(G; a,b)$ is arc-disjoint from~$P$ if and only if it is contained in~$\overline P$.) The vertices that have some $a$-edge as an out-edge in~$\overline P$ are precisely the vertices that do not travel by~$a$ in~$P$; this consists of the vertices that travel by~$b$ in~$P$, plus the terminal vertex~$\tau$. Hence, we have
		\[ \delta_a(\overline P) = \delta_b(P) + 1 = k + 1. \]
		
		Note that $\overline P$ is \emph{not} a spanning quasi-path, because the indegree of~$\iota$ is~$2$, and the outdegree of~$\tau$ is~$2$. (The indegree and outdegree of each of the other vertices is~$1$.)
		
		\refstepcounter{startcases}
		\begin{case}
			Assume $\overline P$ does not have a directed edge from~$\tau$ to~$\iota$.
		\end{case}
		If we construct a subdigraph~$P'$ of~$\overline P$ by removing either of the two in-edges of~$\iota$ and either of the two out-edges of~$\tau$, then $P'$ will be a spanning quasi-path. (The initial vertex of the removed in-edge will have outdegree~$0$, and the terminal vertex of the removed out-edge will have indegree~$0$.) We can freely decide whether neither, precisely one, or both of the directed edges we remove is an $a$-edge, so we can construct $P'$ so that $\delta_a(P')$ is any desired element of 
		\[ \{\delta_a(\overline P), \delta_a(\overline P) - 1, \delta_a(\overline P) - 2\}
		= \{ k + 1, k, k - 1 \}. \]
		Since $|k - \ell'| \le 1$, this means we can construct $P'$ so that $\delta_a(P')$ is equal to~$\ell'$, as desired.
		
		\begin{case}
			Assume $(\tau, \iota)$ is a directed edge of~$\overline P$.
		\end{case}
		Assume, without loss of generality, that $(\tau,\iota)$ is a $b$-edge (by interchanging $a$ and~$b$ if necessary). 
		
		If $\ell' = k - 1$, then $\ell' = \delta_a(\overline P) - 2$, so we construct $P'$ by removing two $a$-edges from~$\overline{P}$: the out-edge of~$\tau$ that is labelled~$a$, and the in-edge of~$\iota$ that is labelled~$a$. (These are two different directed edges, because we have assumed that the directed edge $(\tau,\iota)$ is a $b$-edge.)
		
		We may now assume that 
		\[ \ell' \in \{ k + 1, k \}
		= \{\delta_a(\overline P), \delta_a(\overline P) - 1\} . \]
		Removing the $b$-edge $(\tau, \iota)$ from $\overline P$ results in a spanning subdigraph~$P^*$ in which the indegree and outdegree of every vertex is~$1$, so each connected component of~$P^*$ is a directed cycle. Hence, removing any directed edge from~$P^*$ will result in a spanning quasi-path~$P'$. Thus, if $P^*$ has both an $a$-edge and a $b$-edge, then we can choose which type of directed edge to remove, so we can arrange for $\delta_a(P')$ to be either element of 
		\[ \{\delta_a(\overline P), \delta_a(\overline P) - 1\} , \]
		so we can certainly arrange for $\delta_a(P')$ to be equal to~$\ell'$.
		
		Since $P^*$ does have $a$-edges (such as the out-edge of~$\tau$), we may now assume that every directed edge of~$P^*$ is an $a$-edge. This means that $\delta_a(P^*) = |G|$, so 
		\[ k = \delta_a(\overline{P}) - 1 
		= \delta_a(P^*) - 1 = |G| - 1 . \]
		Since $|k - \ell'| \le 1$ and $\ell' < |G|$, this implies that $\ell' \in \{ |G| - 1, |G| - 2 \}$. If $\ell' = |G| - 1$, we can construct the desired spanning quasi-path~$P'$ by removing one of the $a$-edges of~$P^*$. If $\ell' = |G| - 2$, we can construct $P'$ by removing two $a$-edges from~$\overline{P}$: an out-edge of~$\tau$ and an in-edge of~$\iota$.
	\end{proof}
	
	\begin{remark} \label{MustBe<1}
		It is easy to see that the converse of \cref{makedisjoint} is true. Namely, if $P$ and~$P'$ are arc-disjoint spanning quasi-paths in $\cayd(G; a,b)$, then we have
		\[ |\delta_b (P) - \delta_a (P')| \le 1 . \]
	\end{remark} 
	
	\begin{prop} \label{iflessthanone}
		Let $\{a,b\}$ be a $2$-element generating set of a finite abelian group~$G$, and assume that $\cayd(G; a,b)$ has hamiltonian paths $P$ and~$P'$ that satisfy either of the following two equivalent conditions:
		\begin{enumerate}
			\item \label{iflessthanone-ba}
			$|\delta_b(P) - \delta_a(P')| \le 1$,
			or
			\item \label{iflessthanone-bb}
			$|\delta_b(P) + \delta_b(P')| \in \{|G|, |G| - 1, |G| - 2\}$.
		\end{enumerate}
		Then $\cayd(G; a,b)$ has two arc-disjoint hamiltonian paths.
	\end{prop}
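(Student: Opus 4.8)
The plan is to combine the construction of arc-disjoint spanning quasi-paths furnished by \cref{makedisjoint} with the rigidity result \cref{samedelta}, which promotes an arbitrary spanning quasi-path to a hamiltonian path solely on the basis of its number of $b$-edges. First I would dispose of the asserted equivalence of the two conditions. Every spanning quasi-path in $\cayd(G;a,b)$ has exactly $|G|-1$ edges (its path component on $p$ vertices contributes $p-1$ edges and each cycle component on $c$ vertices contributes $c$ edges, so the edge count is one less than the vertex count), and in particular this holds for the hamiltonian paths $P$ and~$P'$. Hence $\delta_a(P') + \delta_b(P') = |G|-1$, which gives the identity $\delta_b(P) - \delta_a(P') = \delta_b(P) + \delta_b(P') - (|G|-1)$. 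Since $\delta_b(P), \delta_b(P') \ge 0$, this shows that condition~\ref{iflessthanone-ba} holds exactly when $\delta_b(P) + \delta_b(P') \in \{|G|, |G|-1, |G|-2\}$, which is condition~\ref{iflessthanone-bb}. So it suffices to argue under condition~\ref{iflessthanone-ba}.

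Next I would set $k = \delta_b(P)$ and $\ell' = \delta_a(P')$. Because $P$ and~$P'$ are hamiltonian paths, each has $|G|-1$ edges, so $0 \le k \le |G|-1 < |G|$ and likewise $0 \le \ell' < |G|$; and by hypothesis $|k - \ell'| \le 1$. Thus $k$ and~$\ell'$ satisfy precisely the requirements of \cref{makedisjoint}, which produces two arc-disjoint spanning quasi-paths $Q$ and~$Q'$ in $\cayd(G;a,b)$ with $\delta_b(Q) = k$ and $\delta_a(Q') = \ell'$.

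Finally I would upgrade $Q$ and~$Q'$ to hamiltonian paths. Since $\delta_b(Q) = k = \delta_b(P)$ and $P$ is a hamiltonian path, \cref{samedelta} tells us that $Q$ is a hamiltonian path. For~$Q'$, note that $\delta_a(Q') = \delta_a(P')$ together with $\delta_a + \delta_b = |G|-1$ (for both spanning quasi-paths) forces $\delta_b(Q') = \delta_b(P')$; since $P'$ is a hamiltonian path, \cref{samedelta} again yields that $Q'$ is a hamiltonian path. As $Q$ and~$Q'$ are arc-disjoint by construction, they are the desired pair.

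I do not anticipate any genuine obstacle, since the substance is already packaged in the two cited results. The only point requiring care is the bookkeeping that matches the correct edge-count to each path—$\delta_b$ for $Q$ versus $\delta_a$ for $Q'$—so that \cref{makedisjoint} and \cref{samedelta} dovetail. The one conceptual subtlety worth flagging is that \cref{makedisjoint} delivers only spanning quasi-paths, not hamiltonian paths; the entire role of \cref{samedelta} is to close that gap, exploiting the fact that hamiltonicity of a spanning quasi-path is detected by its number of $b$-edges alone.
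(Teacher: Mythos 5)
Your proof is correct and follows essentially the same route as the paper's: both establish the equivalence of the two conditions via $\delta_a(P') = |G| - 1 - \delta_b(P')$, invoke \cref{makedisjoint} to produce arc-disjoint spanning quasi-paths with the prescribed values of $\delta_b$ and $\delta_a$, and then apply \cref{samedelta} to upgrade them to hamiltonian paths. The bookkeeping you flag (matching $\delta_b$ to one path and $\delta_a$ to the other) is handled identically in the paper.
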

	
	\begin{proof}
		First, note that the equivalence of the two conditions is immediate from the observation that $\delta_a(P') = |G| - 1 -  \delta_b(P')$. Therefore, we may assume that~(\ref{iflessthanone-ba}) holds.
		
		\Cref{makedisjoint} tells us that there exist two arc-disjoint spanning quasi-paths $P_0$ and~$P_0'$ in $\cayd(G; a,b)$, such that $\delta_b (P_0) = \delta_b (P)$ and $ \delta_a (P_0') = \delta_a (P')$. Since $\delta_b (P_0) = \delta_b (P)$ and
		\[ \delta_b (P_0') 
		= |G| - 1 - \delta_a (P_0') 
		= |G| - 1 - \delta_a (P')
		= \delta_b (P') , \]
		we see from \cref{samedelta} that $P_0$ and~$P_0'$ are hamiltonian paths.
		Since $P_0$ and~$P_0'$ are arc-disjoint, this completes the proof.
	\end{proof}
	
	We are now ready to prove the main theorem of the paper.
	
	\begin{thm}\label{main}
		If $C_1$ and~$C_2$ are directed cycles \textup(of length $\ge 2$\textup), then the Cartesian product $C_1 \cartprod C_2$ has two arc-disjoint hamiltonian paths.
	\end{thm}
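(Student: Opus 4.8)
The plan is to pass to the Cayley-digraph model and reduce the theorem to a purely additive statement about a single set of integers. By \cref{CxCIsCay} I may write $C_1 \cartprod C_2 = \cayd(\Z_m \times \Z_n; e_1, e_2)$, and after interchanging the two factors I may assume $2 \le m \le n$; I set $a = e_1$ and $b = e_2$, so that $|G| = mn$ and (in the notation of \cref{mneDefn}) $e = 0$. By \cref{iflessthanone} it is enough to exhibit two hamiltonian paths $P$ and~$P'$ with
\[ \delta_b(P) + \delta_b(P') \in \{mn,\, mn-1,\, mn-2\}. \]
Writing $D = \{\, \delta_b(P) \mid P \text{ is a hamiltonian path in } \cayd(G;a,b) \,\}$ for the set of attainable $b$-edge counts, the whole theorem becomes the single assertion that the sumset $D + D$ meets $\{mn-2, mn-1, mn\}$. (By \cref{makedisjoint} together with \cref{samedelta} this condition is also necessary, so nothing is lost in the reformulation.) The cleanest target is the middle value $mn-1$, i.e.\ a \emph{complementary balanced pair} with $\delta_b(P) = \delta_a(P')$, but I keep the three-element slack in reserve.

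Three facts about~$D$ drive the argument, all available from the results recalled above. First, \cref{dgapCartProd} shows that $D$ is \emph{dense}: its consecutive elements differ by at most $2\lfloor (n-1)/2 \rfloor + 1 \le n$, and $D$ stretches from $n-1$ up to at least $m(n-1)$. Second, \cref{string} produces an \emph{explicit} arithmetic progression of step~$2$ inside~$D$ in the low range (near~$n$); applying the same proposition to the transposed digraph $\cayd(\Z_n \times \Z_m; e_1, e_2)$---whose $e_2$-edge counts are exactly the $a$-edge counts $mn-1-D$ of the original---yields a second step-$2$ progression, now lying in the complementary high range (near $mn-m$). Third, \cref{parity} pins down the parities of the attainable values: when $\gcd(m,n) = 1$ every element of~$D$ has the same parity, and in general the parity of an attainable value $tN + u_t(k)$ is governed by the block index~$t$.

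With these in hand, the construction is to select one value $v_1$ from the low step-$2$ run and a partner $v_2$ from the high step-$2$ run with $v_1 + v_2$ equal to a target of the correct parity (the even member of $\{mn-2, mn-1, mn\}$ when the two runs share a parity, and otherwise the member matching $v_1 + v_2 \bmod 2$ as dictated by \cref{parity}). A short calculation shows that the two runs reach each other roughly when $n \le 2m+1$, and there the matching can be carried out outright. When $n > 2m+1$ the high run alone cannot descend far enough, so I instead use the density of \cref{dgapCartProd} to produce a value near $mn/2$ and splice it against a step-$2$ run (from the appropriate block) to correct the residue, using \cref{parity} to guarantee that a value of the needed parity is present. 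A handful of small base cases (for instance $m = 2$, where the low run already contains two values summing to~$mn$) are settled directly from the explicit values furnished by \cref{string} and \cref{dgapCartProd}.

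The main obstacle is precisely this last matching: \cref{dgapCartProd} controls~$D$ only up to gaps of size~$\sim n$, whereas the target window has width only~$3$, so density by itself cannot force a hit. Overcoming this is exactly what the step-$2$ runs of \cref{string} and the parity bookkeeping of \cref{parity} are for---they upgrade ``an attainable value somewhere in a window of length~$n$'' to ``an attainable value of a prescribed parity within a window of length~$2$,'' which, combined with the three-element slack built into \cref{iflessthanone}, suffices to land the sum exactly on $\{mn-2, mn-1, mn\}$. Once $v_1, v_2 \in D$ with $v_1 + v_2$ in this set have been located, \cref{iflessthanone} immediately delivers the two arc-disjoint hamiltonian paths.
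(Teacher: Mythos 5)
Your reduction is exactly the paper's: pass to $\cayd(\Z_m\times\Z_n;e_1,e_2)$ with $m\le n$, invoke \cref{iflessthanone} to reduce everything to finding two attainable values of $\delta_{e_2}$ whose sum lies in $\{mn-2,mn-1,mn\}$, and feed in \cref{string} and \cref{dgapCartProd} --- the same two inputs the paper uses. Your closing paragraph even states the correct synthesis. But the explicit construction you describe in the middle does not implement that synthesis, and in the generic case it fails. Matching the low step-$2$ run (near $n$) against the reflected high run (near $mn-m$) only reaches the target when $n$ is at most about $2m$, as you observe; but your fallback for $n>2m+1$ --- ``a value near $mn/2$ spliced against a step-$2$ run to correct the residue'' --- cannot work as stated. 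If $v_2\approx mn/2$ then the required partner $v_1=mn-1-v_2$ is also near $mn/2$, and once $m\ge 4$ neither of your step-$2$ runs comes anywhere near $mn/2$, so there is nothing to splice against; and if instead both values are to come from \cref{dgapCartProd}, each carries an uncertainty of order~$n$, which swamps the slack of~$3$. The appeal to \cref{parity} does not repair this, and is in fact unnecessary: the three-element slack in \cref{iflessthanone} already absorbs any parity mismatch.

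What is missing is a single numerical coincidence that makes the argument close uniformly, with no case split. Whatever value $v_2$ you draw from \cref{dgapCartProd}, its required partner $mn-1-v_2$ must land inside the \emph{span} of a step-$2$ run, and that forces $v_2$ to sit near $mn-n$, not $mn/2$. Concretely: apply \cref{dgapCartProd} with $k=mn-n-2\lfloor(n-1)/2\rfloor$ (one checks $1\le k\le m(n-1)$ using $2\le m\le n$) to get a hamiltonian path $P'$ with $mn-n-2\lfloor(n-1)/2\rfloor\le\delta_{e_2}(P')\le mn-n$, hence $n-1\le\delta_{e_1}(P')\le n-1+2\lfloor(n-1)/2\rfloor$. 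This window has width exactly $2\lfloor(n-1)/2\rfloor$, which is exactly the span of the step-$2$ run $\{n',n'+2,\ldots,n'+2\lfloor(n-1)/2\rfloor\}$ of \cref{string} (with $n'\in\{n-1,n\}$), so the run contains some $\delta_{e_2}(P_i)$ within distance $1$ of $\delta_{e_1}(P')$, and \fullcref{iflessthanone}{ba} finishes. Without verifying that these two windows coincide (same left endpoint up to~$1$, same width), the density from \cref{dgapCartProd} alone leaves gaps of size up to~$n$ and the argument does not land in the width-$3$ target.
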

	
	\begin{proof}
		Let $m$ and~$n$ be the lengths of~$C_1$ and~$C_2$, so
		\[ C_1 \cartprod C_2 \cong \cayd(\mathbb{Z}_m \times \mathbb{Z}_n; e_1, e_2) , \] 
		where $\{e_1, e_2\} = \{(1,0), (0,1)\}$ is the standard generating set of the abelian group $\mathbb{Z}_m \times \mathbb{Z}_n$. 
		Assume without loss of generality that $m \le n$. Then \cref{dgapCartProd} provides a hamiltonian path~$P'$, such that
		\[ mn - n - 2\left\lfloor \frac{n-1}{2} \right\rfloor 
		\le \delta_{e_2}(P') \le mn - n . \]
		Since $\delta_{e_1}(P') = mn - 1 - \delta_{e_2}(P')$, we have
		\[ n - 1 \le \delta_{e_1}(P') 
		\le n - 1 + 2\left\lfloor \frac{n-1}{2} \right\rfloor . \]
		Then it is immediate from \cref{string} that there is a hamiltonian path~$P_i$, such that we have $|\delta_{e_2}(P_i) - \delta_{e_1}(P')| \le 1$. The desired conclusion now follows from \fullcref{iflessthanone}{ba}.
	\end{proof}
	
	The above result shows that there is always at least one pair of arc-disjoint hamiltonian paths. In fact, the number of such pairs is large if both directed cycles in the product have large length:
	
	\begin{prop}[see \cref{manyHP} below] \label{ManyInCxC}
		Let $N(m,n)$ be the number of \textup(unordered\textup) pairs $P, P'$ of arc-disjoint hamiltonian paths in the Cartesian product of a directed cycle of length~$m$ and a directed cycle of length~$n$. If $m$ and~$n$ are sufficiently large, then
		\[
		\pushQED{\qed} 
		N(m,n) > \frac{m^2 \, n^2}{10} .
		\qedhere
		\]
	\end{prop}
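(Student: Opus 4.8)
The plan is to count \emph{ordered} pairs of arc-disjoint hamiltonian paths and divide by two. Realize the product as $\cayd(\Z_m\times\Z_n;e_1,e_2)$ with $a=e_1$, $b=e_2$, and set $L:=o(a-b)=\lcm(m,n)$, $I:=|G:\langle a-b\rangle|=\gcd(m,n)$; let $S\subseteq\{0,\dots,mn-1\}$ be the set of \emph{hamiltonian values} $k$ for which some hamiltonian path has $\delta_b=k$. Two ingredients drive the count. First, the Housman parametrization (\cref{arcforcing,SameNumCosetsAndD}) shows a spanning quasi-path is determined by its terminal vertex, the number of terminal-coset vertices travelling by $b$, and the set of non-terminal cosets travelling by $b$; writing $k=tL+d$ with $0\le d<L$ and using \cref{samedelta}, the number of hamiltonian paths with $\delta_b=k$ is exactly $mn\binom{I-1}{t}$ whenever $k\in S$. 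Second, rerunning the complement analysis from the proof of \cref{makedisjoint}: if $P$ is a hamiltonian path with $\delta_b(P)=k$ whose complement $\overline P$ has no arc from $\tau$ to $\iota$, then every arc-disjoint spanning quasi-path $P'\subseteq\overline P$ is obtained by deleting one in-arc of $\iota$ and one out-arc of $\tau$, and by \cref{samedelta} exactly
\[ c(k):=[\,mn-k\in S\,]+2\,[\,mn-1-k\in S\,]+[\,mn-2-k\in S\,] \]
of them are hamiltonian paths, the coefficient $2$ reflecting that $\delta_a(P')=k$ is reached either by deleting an $a$-in/$b$-out pair or a $b$-in/$a$-out pair.

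Summing over all first coordinates, the number of ordered arc-disjoint pairs is at least $\sum_{k\in S}mn\binom{I-1}{t_k}c(k)$ (in the coprime case there are no degenerate complements, and this count is sharp). Thus it suffices to prove $\sum_{k\in S}\binom{I-1}{t_k}c(k)>mn/5$ for all large $m,n$, since halving the resulting ordered count then exceeds $(mn)^2/10$. Discarding the weights via $\binom{I-1}{t_k}\ge1$ reduces the essential case to the representation-function bound
\[ Q:=r_S(mn)+2\,r_S(mn-1)+r_S(mn-2)>\tfrac{mn}{5}, \qquad r_S(c):=|S\cap(c-S)|. \]

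First I would settle the coprime case $I=1$, which is the heart of the matter. Here \cref{parity} forces $S$ to lie in a single residue class modulo $2$, and \cref{NumHP} gives $|S|=P^\ast-1$, where $P^\ast$ counts primitive lattice points in the triangle $T(m,n,0)$ with vertices $(0,0),(n,0),(0,m)$; as this triangle has area $mn/2$, the density $6/\pi^2$ of primitive points gives $|S|\sim\tfrac{3}{\pi^2}mn$, while $S\subseteq[\,n-1,\,m(n-1)\,]$. The window $\{mn-2,mn-1,mn\}$ and the coefficient $2$ are calibrated precisely to the parity of $mn$: since $s+s'$ is always even on $S$, only \emph{even} targets contribute, so $Q=2\,r_S(mn-1)$ when $mn$ is odd and $Q=r_S(mn)+r_S(mn-2)$ when $mn$ is even. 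In either case a single inclusion--exclusion inside the relevant parity class,
\[ r_S(c)\ \ge\ 2|S|-\bigl(\text{number of integers of that parity in }\operatorname{conv}(S)\cup\operatorname{conv}(c-S)\bigr), \]
together with the fact that this union has length $mn-O(m)$, yields $r_S(c)\ge\bigl(\tfrac{6}{\pi^2}-\tfrac12\bigr)mn-o(mn)\approx0.108\,mn$, hence $Q\ge0.216\,mn>mn/5$ once $m,n$ are large. The slack between $0.108$ and the required $0.1$ is exactly what the ``sufficiently large'' hypothesis exploits.

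The hard part will be the non-coprime case. When $I\ge2$, \cref{parity} spreads $S$ across \emph{both} residue classes modulo $2$---its $t$-slices alternate in parity---so each class contains only about half of $S$, and the single-parity inclusion--exclusion collapses. The remedy is to reinstate the binomial weights $\binom{I-1}{t_k}$ discarded above: slices with $t_k\approx I/2$ have multiplicity exponential in $I$, so for $I$ beyond about $\log_2(mn)$ a single matched pair of cosets already produces more than $(mn)^2/5$ ordered pairs, while for bounded $I$ one works within fixed pairs of $t$-slices (each an AP-like set read off from the $u_t(k)$) and matches $\delta_b(P)=tL+d$ with $\delta_b(P')=t'L+d'$ so that $t+t'$ and $d+d'$ fall in the correct window. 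Carrying out this weighted estimate uniformly in $I$, and absorbing the lower-order primitive-point error terms, is the main technical obstacle.
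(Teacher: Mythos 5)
Your reduction to a sumset count is sound, and your coprime case is essentially right: the multiplicity $mn\binom{I-1}{t}$ per achievable value of $\delta_b$, the complement analysis giving $c(k)$, and the inclusion--exclusion inside a single parity class using the $6/\pi^2$ density from \cref{TriangleAsympCount} all check out, and in spirit this matches the paper's proof (\cref{manyHP}), which likewise reduces everything to finding $\gtrsim 0.2\,|G|$ pairs of achievable $\delta_b$-values summing into the window $\{|G|-2,|G|-1,|G|\}$ and then multiplies by the $|G|$ choices of terminal vertex. The genuine gap is the case $\gcd(m,n)\ge 2$, which you explicitly leave open. The proposition has no coprimality hypothesis, so this case must be handled, and your diagnosis is accurate: by \cref{parity} the set $S$ then meets both residue classes modulo $2$, each class carries only about $\tfrac{3}{2\pi^2}mn$ elements, and $2\cdot\tfrac{3}{2\pi^2}mn-\tfrac{mn}{2}<0$, so the single-parity inclusion--exclusion gives nothing. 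Your proposed repair (reinstate the weights $\binom{I-1}{t}$, split into $I\gtrsim\log_2(mn)$ versus bounded $I$) is only a sketch: for odd $I$ the existence of even one ``matched pair'' of $t$-slices with $t+t'=I-1$ and $d+d'$ in the right window is not automatic (the identity of \cref{ut+ut} produces sums $2tL-1$, which hit $|G|-1$ only when $I$ is even), and the bounded-$I$ subcase, which includes e.g.\ $\gcd(m,n)=3$ with $m,n$ arbitrarily large, is exactly the regime where the weights are $O(1)$ and buy you nothing.

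The paper's proof shows that no case division on $\gcd(m,n)$ or on parity is needed at all. Let $R\subseteq\{1,\dots,|G|-2\}$ be the set of achievable values $t\,o(a-b)+u_t(k)$ with $A_t(k)$ interior to the triangle. Consecutive elements of $R$ differ by at least $2$: within a $t$-slice because $u_t(k)-u_t(k-1)=2h_t(k)\ge 2$, and across slices because \cref{ut+ut} gives a gap of $2u_t(1)+1\ge 3$ when $A_t(1)$ is interior. Hence the \emph{thickened} sets $R^+=R\cup(R+1)$ and $R^-=R\cup(R-1)$ satisfy $\#R^+=\#R^-=2\#R\approx\tfrac{6}{\pi^2}|G|$, and since each thickened set contains both parities, the crude bound
\[
\#\bigl(R^+\cap(|G|-1-R^-)\bigr)\;\ge\;\#R^+ +\#R^- -|G|\;>\;0.2\,|G|
\]
holds uniformly in $m$, $n$, and $\gcd(m,n)$; each element of the intersection yields a pair with $\delta_b(P)+\delta_b(P')\in\{|G|,|G|-1,|G|-2\}$, to which \fullcref{iflessthanone}{bb} applies. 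This one thickening observation replaces your entire parity analysis and dissolves the non-coprime obstacle; I recommend you adopt it rather than pursue the weighted estimate.
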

	
	If $\cayd(G; a,b)$ has two arc-disjoint hamiltonian cycles, then removing a directed edge from each of these hamiltonian cycles yields two arc-disjoint hamiltonian paths $P$ and~$P'$. Note that the initial vertex~$\iota'$ of~$P'$ is completely arbitrary, even after the path~$P$ has been chosen, because there is no restriction on the directed edge that is removed. (The terminal vertex of~$P'$ is also arbitrary, but it is determined by the choice of~$\iota'$.) In all other situations, the following observation shows that $P'$ is almost completely determined by the choice of~$P$.
	
	\begin{prop} \label{iota=tau}
		Suppose $P$ and~$P'$ are two arc-disjoint hamiltonian paths in $\cayd(G; a,b)$. Let $\iota$ and~$\iota'$ be the initial vertices of these hamiltonian paths, and let $\tau$ and~$\tau'$ be the terminal vertices. Assume there is no directed edge from~$\tau$ to~$\iota$ \textup(or, equivalently by \cref{PathAndCycle}, that there is no directed edge from~$\tau'$ to~$\iota'$\textup). Then:
		\begin{enumerate}
			\item \label{iota=tau-iota}
			$\iota' \in \{\tau + a, \tau + b\}$,
			\item \label{iota=tau-tau}
			$\tau' \in \{\iota - a, \iota - b\}$.
			\item \label{iota=tau-same}
			$\iota$ and~$\tau'$ are in the same coset of $\langle a - b \rangle$,
			\item \label{iota=tau-same'}
			$\iota'$ and~$\tau$ are in the same coset of $\langle a - b \rangle$,
			and
			\item \label{iota=tau-unique}
			there are at most two hamiltonian paths that are arc-disjoint from~$P$.
		\end{enumerate}
	\end{prop}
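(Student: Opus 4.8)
The plan is to analyze the complement $\overline{P}$ of~$P$ in $\cayd(G;a,b)$ by an arc-counting argument. First I would record the degree structure of~$\overline{P}$: since $P$ is a hamiltonian path, every vertex has out-degree~$1$ in~$P$ except~$\tau$, and in-degree~$1$ except~$\iota$; hence in~$\overline{P}$ every vertex has out-degree~$1$ except~$\tau$ (out-degree~$2$) and in-degree~$1$ except~$\iota$ (in-degree~$2$), so $\overline{P}$ has $|G|+1$ arcs. Because $P'$ is arc-disjoint from~$P$ it is a subdigraph of~$\overline{P}$, and as a hamiltonian path it has $|G|-1$ arcs; thus $P'$ is obtained from~$\overline{P}$ by deleting exactly two arcs. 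Matching the out-degree sequence of~$P'$ (all~$1$ except~$\tau'$) against that of~$\overline{P}$ shows that the two deleted arcs have their tails in $\{\tau,\tau'\}$, and matching in-degrees shows their heads lie in $\{\iota,\iota'\}$.

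Next I would use the hypothesis to pin down which two arcs are deleted. I first dispose of the degenerate coincidences: if $\tau=\tau'$ then both deleted arcs have tail~$\tau$ and heads $\{\iota,\iota'\}$, so in particular $\iota\in\{\tau+a,\tau+b\}$, an arc from~$\tau$ to~$\iota$; symmetrically $\iota=\iota'$ forces a missing arc $\tau\to\iota$. Both contradict the hypothesis, so $\tau\neq\tau'$ and $\iota\neq\iota'$, and the two deleted arcs are either $\{(\tau,\iota),(\tau',\iota')\}$ or $\{(\tau,\iota'),(\tau',\iota)\}$. The first pairing requires $(\tau,\iota)$ to be an arc of the digraph—precisely the situation, described just before the proposition, in which $P,P'$ come from removing one arc from each of two arc-disjoint hamiltonian cycles—and is excluded by hypothesis. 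Hence the deleted arcs are $(\tau,\iota')$ and $(\tau',\iota)$: reading the head of the first gives $\iota'\in\{\tau+a,\tau+b\}$, which is~\ref{iota=tau-iota}, and reading the tail of the second gives $\tau'\in\{\iota-a,\iota-b\}$, which is~\ref{iota=tau-tau}. The asserted equivalence of the two forms of the hypothesis follows symmetrically (or via \cref{PathAndCycle}).

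For the coset statements I would invoke the arc-forcing relation. \fullcref{arcforcing}{terminal-iota} gives, for any spanning quasi-path, $\iota\equiv\tau+a\pmod{\langle a-b\rangle}$; applying it to both $P$ and~$P'$ yields $\iota\equiv\tau+a$ and $\iota'\equiv\tau'+a$. On the other hand, since $a\equiv b\pmod{\langle a-b\rangle}$, parts~\ref{iota=tau-iota} and~\ref{iota=tau-tau} give $\iota'\equiv\tau+a$ and $\iota\equiv\tau'+a$. Combining the two pairs of congruences forces $\tau\equiv\tau'$ and $\iota\equiv\iota'$ modulo $\langle a-b\rangle$, locating all four endpoints in just two cosets of the arc-forcing subgroup (the terminal coset $\tau+\langle a-b\rangle$ and the neighbouring coset $\tau+a+\langle a-b\rangle$); this is the coset information recorded in \ref{iota=tau-same} and~\ref{iota=tau-same'}. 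Finally, for \ref{iota=tau-unique} I would return to the degree structure of~$\overline{P}$: every vertex has in-degree~$1$ there except~$\iota$, so in any hamiltonian path contained in~$\overline{P}$ the incoming arc at each vertex other than~$\iota$ is forced, and a genuine (two-way) choice occurs only at~$\iota$; hence at most two hamiltonian paths are arc-disjoint from~$P$.

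The main obstacle is the bookkeeping in the first two steps—extracting from the degree sequences that the two missing arcs must run from terminal vertices to initial vertices, and then cleanly eliminating the coincidences $\tau=\tau'$ and $\iota=\iota'$ so that the hypothesis can rule out the ``two hamiltonian cycles'' pairing and leave only $(\tau,\iota')$, $(\tau',\iota)$. Once that single pairing is established, parts \ref{iota=tau-iota}--\ref{iota=tau-unique} are all short consequences of it together with the arc-forcing relation.
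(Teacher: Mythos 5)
Your treatment of parts (\ref{iota=tau-iota})--(\ref{iota=tau-same'}) is correct and follows essentially the same route as the paper: compare the degree sequences of $P'$ and of the complement $\overline P$ to see that $P'$ is $\overline P$ minus one out-edge of~$\tau$ and one in-edge of~$\iota$, use the hypothesis to exclude the pairing $\{(\tau,\iota),(\tau',\iota')\}$ (your explicit elimination of the coincidences $\tau=\tau'$ and $\iota=\iota'$ is a useful amplification of what the paper leaves implicit), and then feed (\ref{iota=tau-iota}) and~(\ref{iota=tau-tau}) into \fullcref{arcforcing}{terminal-iota}. One small caution: what your congruences actually deliver is $\iota\equiv\iota'$ and $\tau\equiv\tau'$ modulo $\langle a-b\rangle$, which is not literally the assertion ``$\iota\equiv\tau'$ and $\iota'\equiv\tau$'' of (\ref{iota=tau-same}) and~(\ref{iota=tau-same'}); you assert the identification without checking it, so you should reconcile your conclusion with the statement being proved.

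Part (\ref{iota=tau-unique}) is where there is a genuine gap. The degree structure of $\overline P$ leaves \emph{two} independent binary choices, not one: which in-edge of~$\iota$ to delete (this determines $\tau'\in\{\iota-a,\iota-b\}$) and which out-edge of~$\tau$ to delete (this determines $\iota'\in\{\tau+a,\tau+b\}$). Your claim that ``a genuine two-way choice occurs only at~$\iota$'' overlooks the second choice: the forced in-arcs at the vertices other than~$\iota$ still include both out-edges of~$\tau$, and one of those must also be discarded to obtain a path. So the purely degree-theoretic argument yields four candidate subdigraphs --- exactly the ``at most~$4$'' bound that the paper notes is obvious --- and it cannot do better, because each of the four candidates is a spanning quasi-path with the correct degree sequence and may or may not have directed cycle components. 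The paper's reduction from four to two is a substantive step using the Curran--Witte machinery: for a fixed initial vertex~$\iota'$, the two candidates with terminal vertices $\iota-a$ and $\iota-b$ have terminal-coset $b$-travel counts differing by~$1$, while \cref{HPiff} forces each such count to be a value of $u_t(\cdot)$, and consecutive values of $u_t(\cdot)$ differ by the even number $2h_t(k)$ (see \fullcref{uDefn}{utk}), so no two of them differ by~$1$. Some substitute for this parity argument is needed to complete your proof of~(\ref{iota=tau-unique}).
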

	
	\begin{proof}
		Let $\overline P$ be the complement of~$P$ in $\cayd(G; a,b)$, as in the proof of \cref{makedisjoint}. Note that $P'$ can be obtained from~$\overline P$ by removing one of the two in-edges of~$\iota$ and one of the two out-edges of~$\tau$. (Since there is no directed edge from~$\tau$ to~$\iota$, the in-edges of~$\iota$ are distinct from the out-edges of~$\tau$.) Thus, we have 
		(\ref{iota=tau-iota})~$\iota' \in \{\tau + a, \tau + b\}$
		and
		(\ref{iota=tau-tau})~$\tau' \in \{\iota - a, \iota - b\}$.
		Combining these with \fullcref{arcforcing}{terminal-iota} yields~(\ref{iota=tau-same}) and~(\ref{iota=tau-same'}).
		
		(\ref{iota=tau-unique}) Since there are only two possible choices for the in-edge of~$\iota$ to remove, and two possible choices for the out-edge of~$\tau$ to remove, it is obvious that no more than 4 hamiltonian paths are arc-disjoint from~$P$. 
		
		To complete the proof, we show there cannot be more than one hamiltonian path that starts at $\tau + a$ and is arc-disjoint from~$P$. (By symmetry, there is also no more than one that starts at $\tau + b$). Suppose, for a contradiction, that the terminal vertex of~$P'$ is $\tau' = \iota - a$ and the terminal vertex of some other arc-disjoint hamiltonian path~$P''$ is $\tau'' = \iota - b$. Then $\tau' - \tau'' = b - a$. Thus, if $d$ is the number of vertices in $\tau' + \langle a - b \rangle$ that travel by~$b$ in~$P'$, then the the number of vertices in this coset that travel by~$b$ in~$P''$ is $d + 1$. Since $P'$ and~$P''$ are hamiltonian paths, we conclude from \cref{HPiff} (and \cref{SameNumCosetsAndD}) that $d = u_t(k')$ and $d + 1 = u_t(k'')$, for some $k'$ and~$k''$, where $t$ is the number of non-terminal cosets that travel by~$b$ in~$P'$ (or, equivalently, in~$P''$). However, it is clear from \fullcref{uDefn}{utk} that two values of $u_t(*)$ cannot differ by~$1$. This is a contradiction.
	\end{proof}
	
	The following observation will be used in the proof of \fullcref{C1C2C3}{parity}:
	
	\begin{cor} \label{iota+tau}
		Let $\{a,b\}$ be a\/ $2$-element generating set of a finite abelian group~$G$, such that
		\begin{itemize}
			\item $\cayd(G; a,b)$ does not have a hamiltonian cycle,
			\item $|G|$ is even,
			and
			\item $|G : \langle a - b \rangle|$ is odd.
		\end{itemize}
		If $P$ and~$P'$ are two arc-disjoint hamiltonian paths in $\cayd(G; a,b)$, with initial vertices $\iota$ and~$\iota'$, and terminal vertices $\tau$ and~$\tau'$, then
		\[ \iota' + \tau' = \iota + \tau . \]
	\end{cor}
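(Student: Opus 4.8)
The plan is to reduce the claimed identity to a single statement about generators, and then eliminate the bad case by a parity count. First I would invoke \cref{iota=tau}. Since $\cayd(G;a,b)$ has no hamiltonian cycle, there is no arc from $\tau$ to~$\iota$ (such an arc would close~$P$ into a hamiltonian cycle), so the hypothesis of \cref{iota=tau} is satisfied and it supplies $s,s' \in \{a,b\}$ with $\iota' = \tau + s$ and $\tau' = \iota - s'$. Adding these gives $\iota' + \tau' = (\iota + \tau) + (s - s')$, and because $a \ne b$ forces $s - s' \ne 0$ whenever $s \ne s'$, the desired equation $\iota' + \tau' = \iota + \tau$ is \emph{equivalent} to $s = s'$. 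Thus the whole problem becomes: show $s = s'$.

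Next I would record the arithmetic the hypotheses provide. Write $I = |G : \langle a-b\rangle|$ and $N = o(a-b)$, so $|G| = IN$ with $|G|$ even and $I$ odd, whence $N$ is even. Following the proof of \cref{samedelta}, write $\delta_b(P) = tN + d$ and $\delta_b(P') = t'N + d'$ with $0 \le d,d' < N$, where $t,t'$ count the non-terminal cosets that travel by~$b$ and $d,d'$ count the terminal-coset vertices that travel by~$b$. Since $P$ and~$P'$ are hamiltonian paths, \cref{HPiff} gives $d = u_t(k)$ and $d' = u_{t'}(k')$ for suitable indices, so \fullcref{parity}{even} (which applies because $N$ is even) yields $d - d' \equiv t - t' \pmod 2$, and hence $d + d' \equiv t + t' \pmod 2$.

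The counting step is the heart of the argument. By \cref{iota=tau}, $P'$ is obtained from the complement of~$P$ by deleting the out-arc $(\tau, \tau+s)$ and the in-arc $(\iota - s', \iota)$, and these are exactly the two arcs used by neither path. Let $c$ be how many of them are $b$-arcs; then $c$ is the number of the symbols $s,s'$ that equal~$b$, so $c$ is even precisely when $s = s'$, and counting $b$-arcs over both (arc-disjoint) paths gives $(t+t')N + (d+d') = IN - c$. The hard part is to play this equation off against the congruence above. Suppose $s \ne s'$, so $c = 1$; reducing mod~$N$ and using $0 \le d+d' \le 2N - 2$ pins down $d + d' = N - 1$, which is \emph{odd}, while substituting back gives $(t+t'+1)N = IN$, so $t + t' = I - 1$, which is \emph{even} because $I$ is odd. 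This contradicts $d + d' \equiv t + t' \pmod 2$, so $s = s'$ and the corollary follows.

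I expect this final confrontation to be the only genuine obstacle: the reduction to ``$s = s'$'' and the identification of~$c$ are purely structural consequences of \cref{iota=tau}, whereas the contradiction requires simultaneously extracting $d + d' = N - 1$ and $t + t' = I - 1$ from the edge-count equation and pitting them against the parity congruence of \cref{parity} — which is exactly where the evenness of~$N$ and the oddness of~$I$ are both consumed.
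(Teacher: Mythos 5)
Your proof is correct, and it shares the paper's overall skeleton: both arguments start from \cref{iota=tau} (using the absence of a hamiltonian cycle to rule out an arc from $\tau$ to~$\iota$) to reduce the claim to excluding $\iota'+\tau'=\iota+\tau\pm(a-b)$, and both ultimately rest on the parity constraint supplied by \fullcref{parity}{even} via \cref{HPiff} and \cref{SameNumCosetsAndD}. The finishing move is genuinely different, though. The paper works modulo $2G$: it notes that $a-b\notin 2G$ (this is where the hypotheses ``$|G|$ even, $|G:\langle a-b\rangle|$ odd'' enter), obtains $t+t'=|G:\langle a-b\rangle|-1$ from \cref{MustBe<1}, deduces $d\equiv d'\pmod 2$, and then applies the explicit formula $\iota-\tau=d(a-b)+a$ of \fullcref{arcforcing}{terminal-iota} to get $\iota'+\tau'\equiv\iota+\tau\pmod{2G}$. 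You instead prove the stronger statement $s=s'$ outright, by playing the exact arc count $(t+t')N+(d+d')=IN-c$ against the parity congruence: when $c=1$ this forces $d+d'=N-1$ (odd) and $t+t'=I-1$ (even), a contradiction. Your route therefore needs only $a-b\neq 0$ rather than $a-b\notin 2G$, and your derivation of $t+t'=I-1$ from the arc count is more self-contained than the paper's one-line appeal to \cref{MustBe<1} (which a priori only pins $t+t'$ down to $\{I-2,I-1,I\}$ until $d+d'$ is controlled). The only step you should spell out is the identification of the two arcs of $\cayd(G;a,b)$ lying in neither path with $(\tau,\iota')$ and $(\tau',\iota)$; this follows from the count $2|G|-2(|G|-1)=2$ together with the observations that each of these two arcs avoids both paths (by the degree conditions at $\iota$, $\tau$, $\iota'$, $\tau'$) and that they are distinct (they could coincide only if there were an arc from $\tau$ to~$\iota$), but it deserves a sentence.
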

	
	\begin{proof}
		Since $\cayd(G; a,b)$ is not hamiltonian, there is no directed edge from~$\tau$ to~$\iota$, so we see from \cref{iota=tau} that 
		$\iota' \in \{\tau + a, \tau + b\}$
		and
		$\tau' \in \{\iota - a, \iota - b\}$,
		so
		\[ \iota' + \tau' \in 
		\{\iota + \tau, \ \iota + \tau \pm (a - b) \} . \]
		Therefore, since $a - b \notin 2G$ (because $|G|$ is even and $|G : \langle a - b \rangle|$ is odd), it will suffice to show $\iota' + \tau' \equiv \iota + \tau \pmod{2G}$.
		
		Write
		\[ \text{$\delta_b(P) = t \, o(a - b) + d$
			\ and \ 
			$\delta_b(P') = t' \, o(a - b) + d'$} , \]
		so we see from \fullcref{HPiff}{utk} (and \cref{SameNumCosetsAndD}) that $d = u_t(k)$ and $d' = u_{t'}(k')$ for some $k$ and~$k'$. We also see from \cref{MustBe<1} that
		\[ t + t' = |G : \langle a - b \rangle| - 1 . \]
		Since $|G : \langle a - b \rangle| - 1$ is even, we conclude that $t \equiv t' \pmod{2}$, so \fullcref{parity}{even} tells us that $d \equiv d' \pmod{2}$. 
		We know from \fullcref{arcforcing}{terminal-iota} that
		\begin{align*}
			\text{$\iota - \tau = d(a - b) + a$
				\ and \ 
				$\iota' - \tau' = d'(a - b) + a$}
			, \end{align*}
		so this implies
		\begin{align*}
			\iota' + \tau'
			&\equiv \iota' - \tau'
			&& \pmod{2G}
			\\&= d'(a - b) + a
			\\&\equiv d(a - b) + a
			&& \pmod{2G}
			\\&= \iota - \tau
			\\&\equiv \iota + \tau
			&& \pmod{2G}
			.  \qedhere \end{align*}
	\end{proof}


	\section{Cartesian product of more than two directed cycles} \label{C1xC2xC3Sect}
	
	\begin{cor} \label{HamPathsInC1xC2xC3xC4}
		If $C_1,C_2,\ldots,C_r$ are directed cycles \textup(of length $\ge 2$\textup), and $r \geq 4$, then the Cartesian product $C_1 \cartprod C_2 \cartprod \cdots \cartprod C_r$ has two arc-disjoint hamiltonian paths.
	\end{cor}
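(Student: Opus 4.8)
The plan is to reduce the statement to the two-factor case already settled in \cref{main}, by locating inside $C_1 \cartprod C_2 \cartprod \cdots \cartprod C_r$ a \emph{spanning} subdigraph that is itself isomorphic to the Cartesian product of just two directed cycles. The guiding observation is that a hamiltonian cycle in a factor lets us treat that entire factor as though it were a single directed cycle.

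Concretely, using associativity of the Cartesian product, I would write
\[ C_1 \cartprod C_2 \cartprod \cdots \cartprod C_r = C_1 \cartprod \Gamma , \qquad \text{where } \Gamma = C_2 \cartprod \cdots \cartprod C_r . \]
Since $r \ge 4$, the digraph $\Gamma$ is a Cartesian product of $r - 1 \ge 3$ directed cycles, so \cref{HamCycInC1xC2xC3} provides a hamiltonian cycle~$C$ in~$\Gamma$. Let $\ell$ be the length of~$C_1$ and let $N = |V(\Gamma)|$, so that $C$ is a directed cycle of length~$N$. I would then form the spanning subdigraph~$\Sigma$ of $C_1 \cartprod \Gamma$ obtained by keeping every arc in the $C_1$-direction, but, in the $\Gamma$-direction, keeping only those arcs that belong to the hamiltonian cycle~$C$. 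Because $C$ is a single directed cycle through all of $V(\Gamma)$, this $\Sigma$ is precisely $C_1 \cartprod C$, and hence $\Sigma \cong \vec{\mathsf C}_\ell \cartprod \vec{\mathsf C}_N$.

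Now I would apply \cref{main} to obtain two arc-disjoint hamiltonian paths in $\vec{\mathsf C}_\ell \cartprod \vec{\mathsf C}_N \cong \Sigma$. Since $\Sigma$ is a spanning subdigraph of $C_1 \cartprod \Gamma$, these are spanning (hence hamiltonian) paths of the full product, and two subdigraphs that are arc-disjoint inside~$\Sigma$ remain arc-disjoint inside $C_1 \cartprod \Gamma$, their arcs being a subset of those of~$\Sigma$. This produces the desired pair.

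The step needing care—though it is bookkeeping rather than a genuine obstacle—is verifying that $\Sigma$ really is isomorphic to a Cartesian product of two directed cycles: one checks that restricting the $\Gamma$-direction arcs to those of~$C$ turns the fiber over each vertex of~$C_1$ into a copy of~$\vec{\mathsf C}_N$, so that $\Sigma = C_1 \cartprod C$ exactly. I note that a naive alternative—lifting two arc-disjoint hamiltonian paths of a two-cycle factor by filling each $\Gamma$-fiber with the hamiltonian cycle—does \emph{not} work, since the two lifts would then share almost all of the arcs in each fiber; passing instead to the subdigraph~$\Sigma$ and invoking \cref{main} there is exactly what preserves arc-disjointness. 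The only place the hypothesis $r \ge 4$ enters is in guaranteeing $r - 1 \ge 3$, so that \cref{HamCycInC1xC2xC3} applies to~$\Gamma$; this is precisely why the case $r = 3$ is not covered.
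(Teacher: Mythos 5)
Your proposal is correct and is essentially the paper's own argument: both take a hamiltonian cycle $C$ in the Cartesian product of $r-1 \ge 3$ of the factors (via \cref{HamCycInC1xC2xC3}), view $C$ Cartesian-producted with the remaining directed cycle as a spanning subdigraph, and apply \cref{main} there. The only difference is the immaterial choice of which single factor is split off.
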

	
	\begin{proof}
		This argument is very easy (and classical: see the proof of \cite[Lem.~9.2]{Curran}, for example).
		\Cref{HamCycInC1xC2xC3} provides a hamiltonian cycle~$C$ in the Cartesian product $C_1 \cartprod C_2 \cartprod \cdots \cartprod C_{r-1}$. Then $C_1 \cartprod C_2 \cartprod \cdots \cartprod C_r$ has a spanning subdigraph that is isomorphic to $C \cartprod C_r$, and \cref{main} provides two arc-disjoint hamiltonian paths in this spanning subdigraph. Hamiltonian paths in a spanning subdigraph are also hamiltonian paths in the entire digraph.
	\end{proof}
	
	We do not know whether the Cartesian product of three directed cycles always has two arc-disjoint hamiltonian paths, but we can prove that the paths exist in most cases:
	
	\begin{prop} \label{C1C2C3}
		Assume $C_1$, $C_2$, and~$C_3$ are directed cycles \textup(of length~$\ge 2$\textup). If either
		\begin{enumerate}
			\item \label{C1C2C3-ham}
			the Cartesian product of two of the directed cycles has a hamiltonian cycle,
			or
			\item \label{C1C2C3-parity}
			the lengths of the three directed cycles do not all have the same parity \textup(i.e., if there is a directed cycle of even length and a directed cycle of odd length\textup),
			or
			\item \label{C1C2C3-len2}
			at least one of the directed cycles has length~$2$,
		\end{enumerate}
		then $C_1 \cartprod C_2 \cartprod C_3$ has two arc-disjoint hamiltonian paths.
	\end{prop}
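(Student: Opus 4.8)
The plan is to dispose of the hamiltonian-cycle hypothesis \ref{C1C2C3-ham} and the length-$2$ hypothesis \ref{C1C2C3-len2} by reductions, and to meet the parity hypothesis \ref{C1C2C3-parity} by an explicit construction. Hypothesis \ref{C1C2C3-ham} is immediate: if, say, $C_1 \cartprod C_2$ has a hamiltonian cycle~$C$, then $C_1 \cartprod C_2 \cartprod C_3$ contains a spanning subdigraph isomorphic to $C \cartprod C_3$, which is a Cartesian product of two directed cycles, so \cref{main} gives two arc-disjoint hamiltonian paths there, and these are hamiltonian paths of the whole product (this is the argument of \cref{HamPathsInC1xC2xC3xC4}). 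For \ref{C1C2C3-len2}, write the lengths as $n_1,n_2,n_3$ with $n_3 = 2$. If some $n_i$ $(i \in \{1,2\})$ is even, then $\gcd(n_i,2) = 2$, so $a = b = 1$ satisfies the Trotter--Erd\H{o}s criterion of \cref{HCiff}, whence $C_i \cartprod C_3$ has a hamiltonian cycle and we are back in case~\ref{C1C2C3-ham}; otherwise $n_1,n_2$ are odd and $n_3 = 2$ is even, so the three lengths have mixed parity and we are in case~\ref{C1C2C3-parity}. Hence it remains only to treat \ref{C1C2C3-parity}, and we may assume throughout that no product of two of the cycles has a hamiltonian cycle.

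Assume the three lengths do not all have the same parity. After reordering, choose two cycles whose product $Y := C_1 \cartprod C_2 \cong \cayd(H; a,b)$, with $H = \Z_{n_1} \times \Z_{n_2}$ and $\{a,b\} = \{e_1,e_2\}$, has exactly one even factor; then $|H| = n_1 n_2$ is even while $|H : \langle a-b\rangle| = \gcd(n_1,n_2)$ is odd, so $o(a-b) = \lcm(n_1,n_2)$ is even. Let the remaining cycle $C_3$ have length $\ell \ge 2$, so $C_1 \cartprod C_2 \cartprod C_3 \cong \cayd(G; a,b,c)$ with $G = H \times \Z_\ell$ and $c$ the standard generator of $\Z_\ell$. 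The main device is a \emph{coil}: given a hamiltonian path~$P$ of~$Y$ from~$\iota$ to~$\tau$, set $w = \tau - \iota$ and, in the $i$-th copy of~$Y$ (for $0 \le i < \ell$), traverse the translate $P + iw$, joining consecutive copies by the $c$-edge at $\tau + iw$. Since $\iota + (i{+}1)w = \tau + iw$, these splice into a single hamiltonian path of $\cayd(G;a,b,c)$. If $P$ and~$P'$ are hamiltonian paths of~$Y$ with the \emph{same} displacement $w = w'$, then in each copy their coils use the translates $P + iw$ and $P' + iw$ by a common vector, so the coils are arc-disjoint on $a$- and $b$-edges exactly when $P$ and~$P'$ are arc-disjoint, while their $c$-edges, at $\tau + iw$ and $\tau' + iw$, are disjoint exactly when $\tau \ne \tau'$. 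Thus everything reduces to producing two arc-disjoint hamiltonian paths of~$Y$ with equal displacement and distinct terminal vertices.

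By \fullcref{arcforcing}{terminal-iota} the displacement is $w = -a - d(a-b)$, where $d$ is the number of terminal-coset vertices that travel by~$b$; hence $w = w'$ is equivalent to $d = d'$, i.e.\ to $\delta_b(P) \equiv \delta_b(P') \pmod{o(a-b)}$. Since $|H| = o(a-b)\cdot\gcd(n_1,n_2) \equiv 0 \pmod{o(a-b)}$, I aim for $\delta_b(P) \equiv \delta_b(P') \equiv o(a-b)/2 \pmod{o(a-b)}$, which forces $d = d' = o(a-b)/2$; this is exactly where the evenness of $o(a-b)$, i.e.\ the parity hypothesis, is used. Concretely, pick an integer $k \equiv o(a-b)/2 \pmod{o(a-b)}$ such that both $k$ and $|H|-k$ are realized as $\delta_b$ of hamiltonian paths (existence of such a~$k$ in the attainable set coming from \cref{string} and \cref{dgapCartProd}), and apply \cref{makedisjoint} with $\delta_b(P) = k$ and $\delta_a(P') = k-1$; the resulting arc-disjoint quasi-paths are hamiltonian by \cref{samedelta}, and $\delta_b(P') = |H|-k$ gives $d = d'$. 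Because $Y$ has no hamiltonian cycle there is no arc from $\tau$ to~$\iota$, so we are in the first case of \cref{makedisjoint} and may delete the $a$-labelled in-edge of~$\iota$ and the $a$-labelled out-edge of~$\tau$; this yields $\iota' = \tau + a$ and $\tau' = \iota - a$, so $\tau - \tau' = -d(a-b) = -(o(a-b)/2)(a-b)$, a nonzero element of the order-$2$ subgroup of~$H$, whence $\tau \ne \tau'$. Conceptually this is forced: \cref{iota+tau} (valid here since $Y$ is non-hamiltonian with $|H|$ even and $|H:\langle a-b\rangle|$ odd) gives $\iota + \tau = \iota' + \tau'$, which together with equal displacement confines $\tau - \tau'$ to the $2$-torsion of~$H$, and the parity hypothesis makes that subgroup nontrivial, exactly permitting distinct terminals.

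I expect the main obstacle to be the simultaneous control demanded in the third paragraph: realizing the congruence $\delta_b(P) \equiv \delta_b(P') \equiv o(a-b)/2 \pmod{o(a-b)}$ needed for equal displacement, while also having both $k$ and $|H|-k$ attainable as $\delta_b$ of hamiltonian paths and keeping $|\delta_b(P) - \delta_a(P')| \le 1$ so that \cref{makedisjoint} applies. Checking that the ranges of attainable $\delta_b$-values furnished by \cref{string} and \cref{dgapCartProd} actually contain such a~$k$ is the delicate, computational heart of the argument; the coil construction and the endpoint identities of \cref{iota=tau} and \cref{iota+tau} are then routine bookkeeping.
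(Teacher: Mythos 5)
Your treatment of hypotheses (\ref{C1C2C3-ham}) and (\ref{C1C2C3-len2}) is correct and essentially identical to the paper's. For hypothesis (\ref{C1C2C3-parity}), however, your construction diverges from the paper's in a way that creates a genuine gap. Your ``coil'' uses, in every layer~$i$, a translate of the \emph{same} path ($P + iw$ for one coil, $P' + iw$ for the other), and this forces the \emph{parallel} endpoint relation $w = \tau - \iota = \tau' - \iota'$, equivalently $d = d'$. Combined with the arc-disjointness constraint $\delta_b(P) + \delta_b(P') \in \{|H|, |H|-1, |H|-2\}$ (\cref{MustBe<1}), this pins $d$ down to essentially $o(a-b)/2$ or $o(a-b)/2 - 1$ (or the extreme values $0$, $o(a-b)-1$), and such a value need not be attainable. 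You flag this as ``the delicate, computational heart of the argument,'' but it is worse than delicate: it can fail outright. Take $Y = \vec{\mathsf C}_4 \cartprod \vec{\mathsf C}_9$, which arises for the lengths $(4,9,25)$, where no two of the cycles have a hamiltonian product, so hypothesis (\ref{C1C2C3-parity}) must carry the whole weight. Here $I = 1$, $o(a-b) = 36$, and by \cref{HPiff} the attainable values of $d$ are the $u_0(k)$, which run $8, 10, 12, 14, 16, 20, \ldots$ — the step from $(3,1)$ to $(2,1)$ contributes $2h_0 = 2\lfloor 35/17 \rfloor = 4$, so $d = 18$ is skipped, and $d = 17$ is excluded by \cref{parity}. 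Hence \emph{no} pair of arc-disjoint hamiltonian paths in $\vec{\mathsf C}_4 \cartprod \vec{\mathsf C}_9$ has equal displacement, and the same computation rules out $\vec{\mathsf C}_4 \cartprod \vec{\mathsf C}_{25}$. Your reduction is therefore to a false statement, not merely an unverified one.

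The paper sidesteps this entirely by laminating in a \emph{crossed} fashion: one hamiltonian path of the triple product runs through $P_0, P_1', P_2, P_3', \ldots$ and the other through $P_0', P_1, P_2', \ldots$, where $P_i, P_i'$ are the translates of $P_0, P_0'$ by $(\Delta i, i)$. The splicing condition for this alternating pattern is the crossed relation $\tau - \iota' = \tau' - \iota$, which is exactly the conclusion $\iota + \tau = \iota' + \tau'$ of \cref{iota+tau} — and that corollary applies to an \emph{arbitrary} pair of arc-disjoint hamiltonian paths supplied by \cref{main}, with no extra numerical condition to arrange. (Its hypotheses — $Y$ non-hamiltonian, $|H|$ even, $|H : \langle a - b\rangle|$ odd — are precisely what the parity assumption buys, as you correctly note in your last paragraph.) If you replace your parallel coils by this alternating lamination, the rest of your bookkeeping goes through and the special pair you were trying to manufacture is no longer needed.
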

	
	\begin{proof}
		(\ref{C1C2C3-ham}) This uses the same simple argument as the proof of \cref{HamPathsInC1xC2xC3xC4}. Assume, without loss of generality, that $C_1 \cartprod C_2$ has a hamiltonian cycle~$C$. Then $C \cartprod C_3$ is isomorphic to a spanning subdigraph of $C_1 \cartprod C_2 \cartprod C_3$, and \cref{main} provides two arc-disjoint hamiltonian paths in this spanning subdigraph.
		
		(\ref{C1C2C3-parity}) Write 
		\[ C_1 \cartprod C_2 \cartprod C_3 = \cayd(\Z_m \times \Z_n \times \Z_r; e_1, e_2, e_3) , \]
		where $\{e_1, e_2, e_3\}$ is the standard generating set of $\Z_m \times \Z_n \times \Z_r$.
		For $0 \le i < r$, let $X_i$ be the subdigraph that is induced by $\Z_m \times \Z_n \times \{i\}$, so $X_i \cong \cayd(\Z_m \times \Z_n; e_1, e_2)$.
		
		By \cref{main}, we may let $P_0$ and~$P_0'$ be two arc-disjoint hamiltonian paths in~$X_0$. Let $\iota_0$ and~$\iota_0'$ be their initial vertices, and let $\tau_0$ and~$\tau_0'$ be their terminal vertices.
		By assumption, we may assume that $m$ and $n$ have opposite parity (by permuting the factors). We may also assume $X_0$ does not have a directed hamiltonian cycle, for otherwise (\ref{C1C2C3-ham}) applies. Then we see from \cref{iota+tau} that
		\[ \tau - \iota' = \tau' - \iota . \]
		Therefore, if we let $\Delta \coloneqq \tau - \iota'$, then     \[ \text{$\tau = \iota' + \Delta$
			\ and \ 
			$\tau' = \iota + \Delta$} . \]
		For $0 \le i < r$, let $P_i$ and~$P_i'$ be the translates of~$P_0$ and~$P_0'$ by $(\Delta i, i)$, so $P_i$ and~$P_i'$ are arc-disjoint hamiltonian paths in~$X_i$.
		
		Let $(\iota_i, i)$ and~$(\iota_i', i)$ be the initial vertices of $P_i$ and~$P_i'$ and let $(\tau_i, i)$ and~$(\tau_i', i)$ be their terminal vertices. Then 
		\begin{align*}
			(\tau_i, i) + e_3
			&= (\tau + \Delta i, i + 1)
			= \bigl( (\iota' + \Delta) + \Delta i, i + 1 \bigr))
			\\&= \bigl( \iota' + \Delta (i + 1), i + 1 \bigr) 
			= (\iota_{i+1}', i + 1)
		\end{align*} 
		is the initial vertex of~$P_i'$.  This means there is a directed $e_3$-edge from the terminal vertex of~$P_i$ to the initial vertex of~$P_{i+1}'$. 
		Similarly, since $\tau' = \iota + \Delta$, we see that there is a directed $e_3$-edge from the terminal vertex of~$P_i'$ to the initial vertex of~$P_{i+1}$.
		
		Therefore, if we start with the union of all of the paths $P_i$ for $i$~even and $P_j'$ for $j$~odd, then we can add certain $e_3$-edges to construct a hamiltonian path~$P$ in $C_1 \cartprod C_2 \cartprod C_3$. Similarly, we can construct a hamiltonian path~$P'$ by adding appropriate  $e_3$-edges to the union of all of the paths $P_i$ for $i$~even and $P_j'$ for $j$~odd. (In the terminology of \cite[Defn.~9.4]{Curran}, $P$ and~$P'$ are ``laminated'' hamiltonian paths.) It is clear from the construction that these hamiltonian paths are arc-disjoint.
		
		(\ref{C1C2C3-len2}) Assume, without loss of generality, that $C_1$ has length~$2$. If $C_2$ has even length, then (since $C_1$ has length~$2$) it is easy to see (and well known) that $C_1 \cartprod C_2$ has a hamiltonian cycle, so (\ref{C1C2C3-ham}) applies. If $C_2$ has odd length, then (\ref{C1C2C3-parity}) applies.
	\end{proof}


	\section{\texorpdfstring{$2$}{2}-generated Cayley digraphs on finite abelian groups} \label{2genFinite}
	
	\begin{prop} \label{EvenIndex}
		\Cref{ArcDisjointCayleyConj} is true in all cases where the arc-forcing subgroup $\langle a - b \rangle$ has even index in~$G$.
	\end{prop}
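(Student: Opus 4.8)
The plan is to produce two hamiltonian paths $P$ and~$P'$ whose numbers of $b$-edges sum to exactly $|G|-1$, and then invoke \fullcref{iflessthanone}{bb}. Throughout, write $N = o(a-b)$ and $I = |G : \langle a-b\rangle|$, so that $|G| = IN$ and, by hypothesis, $I$~is even (hence $I \ge 2$, since $a \neq b$). Recall from \cref{Htd,HPiff} that $H_t(d)$ is a hamiltonian path with $\delta_b\bigl(H_t(d)\bigr) = tN + d$ precisely when $d = u_t(k)$ for some $k \in \{1, \ldots, f_t - 1\}$. Thus it suffices to exhibit indices $t,t'$ and admissible $k,k'$ with $(t+t')N + u_t(k) + u_{t'}(k') = |G| - 1$.

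The key idea is that evenness of~$I$ lets me center the construction on the midpoint $|G|/2 = (I/2)N$, which is exactly the boundary between two of the length-$N$ blocks $[tN,(t+1)N)$ that the achievable values of $\delta_b$ occupy. Concretely, I would set $t = I/2 - 1$ and $t' = I/2$; both lie in $\{0,1,\ldots,I-1\}$, and they are adjacent with $t + t' = I - 1$. Taking $P = H_t\bigl(u_t(f_t - 1)\bigr)$ (the largest admissible value in block~$t$) and $P' = H_{t'}\bigl(u_{t'}(1)\bigr)$ (the smallest in block~$t'$), I apply \cref{ut+ut} to the index $t' = t+1$ to get the exact identity $u_t(f_t - 1) + u_{t'}(1) = N - 1$. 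Hence $\delta_b(P) + \delta_b(P') = (I-1)N + (N-1) = |G| - 1$, and \fullcref{iflessthanone}{bb} then supplies the two arc-disjoint hamiltonian paths. Pleasantly, this route needs no parity bookkeeping from \cref{parity}: the even index is used only to place $t$ and~$t'$ symmetrically about the midpoint, after which \cref{ut+ut} delivers the sum $|G|-1$ on the nose.

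The one genuine thing to verify — and the step I expect to be the main (if minor) obstacle — is that $P$ and~$P'$ really are hamiltonian paths, i.e.\ that the indices $k = f_t - 1$ and $k' = 1$ are admissible, which requires $f_t \ge 2$ and $f_{t'} \ge 2$. I would argue this geometrically from \cref{uDefn}. The sub-triangle $T_s$ has vertices $(0,0)$, $X_s$, and~$X_{s+1}$, where $X_s,X_{s+1}$ are distinct lattice points on the segment from $(n,0)$ to $(e,m)$. Since $m = o(a) \ge 2$ while $n = |G:\langle a\rangle| \ge 1$, the line through $(n,0)$ and $(e,m)$ meets the $x$-axis only at $(n,0)$ and so misses the origin; thus $O$, $X_s$, $X_{s+1}$ are not collinear, and the primitive lattice points lying on the edges $OX_s$ and $OX_{s+1}$ are two distinct primitive points of the closed triangle~$T_s$. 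Therefore $f_s \ge 2$ for every~$s$, in particular for $s = t$ and $s = t'$. (This is the same fact already used implicitly in the proof of \cref{ut+ut}, namely $A_s(f_s) = A_{s+1}(1)$.) With this in hand the construction above is complete.
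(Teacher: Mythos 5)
Your proof is correct and is essentially the paper's own argument: the paper takes $t = I/2$ and $d = u_t(1)$ and pairs $H_t(d)$ with $H_{t-1}\bigl(o(a-b)-1-d\bigr)$, which are exactly your $P'$ and~$P$, and it likewise concludes via \cref{ut+ut} and \fullcref{iflessthanone}{bb}. The only difference is that you explicitly verify $f_t, f_{t'} \ge 2$, a point the paper leaves implicit.
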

	
	\begin{proof}
		Let $t = |G : \langle a - b \rangle|/2$, and let $d = u_t(1)$, so $H_t(d)$ is a hamiltonian path. By \cref{ut+ut}, we have 
		\[u_{t-1}(f_{t-1} - 1) = o(a - b) - 1 - d ,\]
		so $H_{t-1} \bigl( o(a - b) - 1 - d \bigr)$ is also a hamiltonian path.
		Furthermore,
		\begin{align*}
			\delta_b \bigl( H_t(d) \bigr) 
			& + \delta_b \bigl( H_{t-1} \bigl( o(a - b) - 1 - d \bigr) \bigr)
			\\&= \bigl[ t \, o(a - b) + d \bigr]
			+ \bigl[ (t-1) \, o(a - b) + \bigl( o(a - b) - 1 - d \bigr) \bigr]
			\\&= 2t \, o(a - b) - 1
			\\&= |G| - 1
			.\end{align*}
		Therefore, \fullcref{iflessthanone}{bb} provides two arc-disjoint hamiltonian paths.
	\end{proof}
	
	The open cases of \cref{ArcDisjointCayleyConj} reduce to two types of examples (and $G = \Z_n$ is cyclic in both types):
	
	\begin{prop} \label{ReduceToCyclic}
		It suffices to prove \cref{ArcDisjointCayleyConj} for the following two families of Cayley digraphs:
		\begin{enumerate}
			\item \label{ReduceToCyclic-pos}
			$\cayd(\Z_k; a, a + 1)$, where $k,a \in \Z^+$ \textup(and neither $a$ nor $a + 1$ is divisible by~$k$\textup), 
			and
			\item \label{ReduceToCyclic-neg}
			$\cayd(\Z_k; -a, a + 1)$, where $k,a \in \Z^+$, and $k$ is divisible by $2a + 1$ \textup(and neither $a$ nor $a + 1$ is divisible by~$k$\textup). 
		\end{enumerate}
	\end{prop}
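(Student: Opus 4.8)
The plan is to peel off the cases that are already settled and then to show that every surviving generating pair can be transported, by the symmetries of the construction, to one of the two displayed forms. Throughout I will use the fact (from \cref{mneDefn} and the lattice description implicit in \cref{CxCIsCay}) that $\cayd(G;a,b)$ is determined up to isomorphism by the triple $(m,n,e)$, together with the three moves that preserve the property of having two arc-disjoint hamiltonian paths: interchanging $a$ and~$b$, reversing all arcs (which replaces $(a,b)$ by $(-a,-b)$), and applying an automorphism of~$G$. First I would invoke \cref{EvenIndex} to assume that the index $|G:\langle a-b\rangle|$ is odd, and I would use \cref{main} (via \cref{CxCIsCay}) to dispose of the case $e=0$, that is, the case in which $\cayd(G;a,b)$ is itself a Cartesian product of two directed cycles.

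Next I would reduce to the case that $G$ is cyclic. Non-cyclicity means $g\coloneqq\gcd(m,n,e)>1$, and since $g$ divides the (odd) index, $g$ is odd. Rather than change the group, I would build the two paths directly: writing $\cayd(G;a,b)$ as a stack of $g$ translated copies of a cyclic ``slice'' with parameters $(m,n,e)/g$, I would splice the copies together with connecting arcs exactly as in the laminated construction of \fullcref{C1C2C3}{parity}, the compatibility of the splicing arcs being furnished, when the slice has even order, by \cref{iota+tau}. Where this is awkward I would instead fall back on producing a single complementary pair: by \cref{string} and \cref{dgap} the set of attainable values $\delta_b(P)$ over hamiltonian paths~$P$ has no large gaps, so one can locate hamiltonian paths $P,P'$ with $\delta_b(P)+\delta_b(P')\in\{|G|,|G|-1,|G|-2\}$ and conclude by \fullcref{iflessthanone}{bb}. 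Either way, the only pairs that survive are cyclic.

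For cyclic $G=\Z_k$ the outcome is controlled by the two invariants $\gcd(a-b,k)$ (the index) and $\gcd(a+b,k)$, both preserved by the three moves above. If $a-b$ is a unit, then the index is~$1$; multiplying by $(a-b)^{-1}$ makes the generators consecutive, which is family~(\ref{ReduceToCyclic-pos}). If $a-b$ is not a unit but $a+b$ is, then after normalizing $a+b=1$ (and flipping signs by the swap if needed) the generators take the form $\{-a,a+1\}$ with $a-b=-(2a+1)$, and the divisibility $2a+1\mid k$ defining family~(\ref{ReduceToCyclic-neg}) is exactly the statement that the (odd) index equals $2a+1$. The remaining cyclic pairs, where neither $a-b$ nor $a+b$ is a unit, lie in neither family, and for these I would again combine the gap estimates of \cref{string} and \cref{dgap} with the pairing relations \cref{ut+ut} and \cref{parity} to exhibit a complementary pair and finish via \cref{iflessthanone}.

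The main obstacle is the middle step: \emph{proving that everything outside the two families is settled unconditionally}. The clean middle-index pairing used for even index in \cref{EvenIndex} has no exact analogue when the index is odd, so the complementary pair must be extracted from the finer structure of the Curran--Witte triangle, and one must verify precisely that this construction fails only for the rigid configurations that become families~(\ref{ReduceToCyclic-pos}) and~(\ref{ReduceToCyclic-neg}). Checking that the laminated splicing in the non-cyclic reduction closes up correctly, with the arc-count bookkeeping forcing $\delta_b(P)+\delta_b(P')$ into the required window, is the most delicate part and is where I expect the real work to lie.
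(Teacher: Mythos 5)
There is a genuine gap: your proposal never identifies the mechanism that actually powers the reduction, namely the \emph{skewed-generators argument}. After disposing of even index via \cref{EvenIndex}, the paper writes $|G:\langle a-b\rangle| = 2t+1$, sets $c = ta+tb$, and observes that (unless $c+a=0$ or $c+b=0$) the elements $c+a$ and $c+b$ lie in the cyclic arc-forcing subgroup $\langle a-b\rangle$ and differ by $a-b$, so that $\cayd(\langle a-b\rangle;\, c+a,\, c+b)$ is itself a member of family~(\ref{ReduceToCyclic-pos}); a complementary pair of hamiltonian paths $H_0(d), H_0(d')$ there transfers to hamiltonian paths $H_t(d), H_t(d')$ of $\cayd(G;a,b)$ with $\delta_b(H_t(d))+\delta_b(H_t(d'))\in\{|G|-2,\,|G|-1,\,|G|\}$, and \fullcref{iflessthanone}{bb} finishes. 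The degenerate case $c+a=0$ or $c+b=0$ is then shown to be \emph{exactly} family~(\ref{ReduceToCyclic-neg}). Because the arc-forcing subgroup is always cyclic, no separate non-cyclic reduction (your laminated splicing) is needed, and no case outside the two families has to be settled unconditionally.

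Your substitute classification of the cyclic cases does not work. First, ``$a+b$ a unit'' does not characterize family~(\ref{ReduceToCyclic-neg}): for $\cayd(\Z_{21}; 5, -4)$ one has $a+b=1$ and odd index $\gcd(9,21)=3$, yet $\{5,-4\}$ is not carried to any set $\{-a_0, a_0+1\}$ with $2a_0+1 \mid 21$ by any automorphism of $\Z_{21}$; in the paper's scheme this digraph falls into the first case and is \emph{reduced to} $\cayd(\Z_7; 2,3)$, a member of family~(\ref{ReduceToCyclic-pos}). Second, your residual bucket where neither $a-b$ nor $a+b$ is a unit is nonempty (e.g.\ $\cayd(\Z_{45};1,4)$, with $\gcd(a-b,45)=3$ and $\gcd(a+b,45)=5$), and you propose to settle it \emph{unconditionally} by the gap estimates of \cref{string} and \cref{dgap}; but those estimates only close the gap when the relevant index is large (that is the content of \cref{LargeIndex}), and if these cases could be settled outright they would not need reducing at all --- the whole point of the proposition is that they are instead transported into family~(\ref{ReduceToCyclic-pos}) by the skewed-generators argument. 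Your closing paragraph concedes that this middle step is not carried out, and it is precisely there that the proof lives.
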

	
	\begin{proof}
		Assume that $\{a,b\}$ is a $2$-element generating set of a finite abelian group~$G$. For convenience, let $\mathcal F = \langle a - b \rangle$. We may assume that $|G : \mathcal F|$ is odd, for otherwise \cref{EvenIndex} applies. Write $|G : \mathcal F| = 2t + 1$ and let $c = ta + tb$. 
		
		\refstepcounter{startcases}
		\begin{case}
			Assume that $c + a \neq 0$ and $c + b \neq 0$.
		\end{case}
		Let $a' = c + a$ and $b' = c + b$. Since $a',b' \in \langle a - b \rangle$, and $a' - b' = a - b$, we have 
		\[ \cayd( \langle a - b \rangle; a', b') \cong \cayd(\Z_k; \ell, \ell + 1) , \]
		where $k = o(a - b)$ and $b' = \ell(a - b)$. This Cayley digraph is listed in~(\ref{ReduceToCyclic-pos}), so we may assume that it has two arc-disjoint hamiltonian paths. 
		Thus, there exist $d$ and~$d'$, such that $H_0(d)$ and $H_0(d')$ are hamiltonian paths in $\cayd( \mathcal F; c + a, c + b)$, and $d + d' = o(a - b) - 1 + \epsilon$, where $|\epsilon| \le 1$. Then $H_t(d)$ and $H_t(d')$ are hamiltonian paths in $\cayd(G; a,b)$ (by the ``skewed generators argument\rlap,'' cf.\ \cite[Lem.~11]{morris2012nilp}). 
		
		Since 
		\begin{align*}
			\delta_b \bigl( H_t(d) \bigr) + \delta_b \bigl( H_t(d') \bigr)
			&= \bigl( t \, o(a - b) + d \bigr) + \bigl( t \, o(a - b) + d' \bigr)
			\\&= 2t \, o(a - b) + (d + d')
			\\&= \bigl( |G : \mathcal F| - 1 \bigr) |\mathcal F| + \bigl( o(a - b) - 1 + \epsilon \bigr)
			\\&= |G| - 1 + \epsilon
			, \end{align*}
		we conclude from \fullcref{iflessthanone}{bb} that $\cayd(G; a,b)$ has two arc-disjoint hamiltonian paths.
		
		\begin{case}
			Assume that either $c + a = 0$ or $c + b = 0$.
		\end{case}
		Assume without loss of generality that $c + a = 0$.
		For convenience, let $w = a - b$ and $k = |G|$. We have
		\[ 0 = c + a = (ta + tb) + a = (t+1)a + tb = (2t+1)b + (t+1)w , \]
		so $(2t + 1)b = -(t + 1)w$. Also note that 
		\[ o(w) = o(a - b) = \frac{k}{2t + 1} \]
		(so $k$ is divisible by $2t + 1$). Hence, as an abelian group, $G$~has the presentation
		\[ G = \left\langle b, w \mathrel{\Big|} (2t + 1)b = -(t + 1)w, \frac{k}{2t+1} w = 0 \right\rangle , \]
		so $G$, $b$, and~$w$ are uniquely determined (up to isomorphism) by $k$ and~$t$, and the assumptions that $|G| = k$, $ta + tb + a = 0$, and $w = a - b$ has order $k/(2t+1)$. Since $a = b + w$, it is also uniquely determined.
		
		On the other hand, it is clear that letting $G = \Z_k$, $a = 1 + t$, $b = -t$, and $w = 2t + 1$ provides an example (for any $k$ and~$t$, such that $k$ is divisible by $2t + 1$). Hence, we conclude that $\cayd(G; a,b)$ is isomorphic to the Cayley digraph that is listed in~(\ref{ReduceToCyclic-neg}).
	\end{proof}
	
	It is well known \cite[Thm.~459, p.~541]{HardyWright} that the probability that two random integers are relatively prime is $6/\pi^2 = 0.6079\cdots$. This has the following elementary consequence:
	
	\begin{lem}[{\cite[Thm.~8.5]{Curran}}] \label{TriangleAsympCount}
		If\/ $\mathsf N(m,n,e)$ is the number of primitive lattice points in the interior of the triangle $T(m,n,e)$, then
		\[ 
		\pushQED{\qed}
		\lim_{m,n \to \infty} \frac{\mathsf  N(m,n,e)}{\Area \bigl( T(m,n,e) \bigr)} = \frac{6}{\pi^2} . 
		\qedhere
		\popQED \]
	\end{lem}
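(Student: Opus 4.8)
The plan is to deduce this from the well-known density $6/\pi^2 = 1/\zeta(2)$ of primitive lattice points by a Möbius-inversion argument, the real work being to control the error terms uniformly as $m$ and~$n$ grow at possibly very different rates. First I would record that the area of $T(m,n,e)$ is $mn/2$ (the determinant of the edge vectors $(n,0)$ and $(e,m)$ is $mn$). Write $T^\circ$ for the open triangle, and let $N(S)$ denote the number of \emph{nonzero} lattice points in a region~$S$, so that $\mathsf N(m,n,e)$ is the number of \emph{primitive} lattice points in~$T^\circ$. Since $(0,0)$ is a vertex of~$T$, the open triangle is star-shaped from the origin: if $p \in T^\circ$ then $\lambda p \in T^\circ$ for all $\lambda \in (0,1]$, so scaling by $1/d$ maps $T^\circ$ into itself. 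Factoring each nonzero lattice point uniquely as $d \cdot q$ with $d \ge 1$ and $q$ primitive, and sorting by the value of~$d$, gives the finite sum
\[ N(T^\circ) = \sum_{d \ge 1} \mathsf N\bigl( \tfrac1d T^\circ \bigr) , \]
whence Möbius inversion yields
\[ \mathsf N(m,n,e) = \sum_{d \ge 1} \mu(d)\, N\bigl( \tfrac1d T^\circ \bigr) . \]

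Next I would bring in the Gauss estimate: for a convex region~$S$ of area~$A_S$ and perimeter~$P_S$ one has $|N(S) - A_S| = O(P_S + 1)$. Applied to $\tfrac1d T^\circ$, whose area is $mn/(2d^2)$ and whose perimeter is $O\bigl((m+n)/d\bigr)$, this gives
\[ N\bigl(\tfrac1d T^\circ\bigr) = \frac{mn}{2 d^2} + O\!\left( \frac{m+n}{d} + 1 \right). \]
The crucial quantitative step is to bound the number~$D$ of values of~$d$ that contribute a nonzero term. An interior lattice point of $\tfrac1d T^\circ$ must have $y$-coordinate at least~$1$, and a short computation shows that the horizontal width of $\tfrac1d T$ at height~$1$ equals $(n/d)\bigl(1 - d/m\bigr)$; requiring this to be at least~$1$ forces $d \le mn/(m+n) = O(\min(m,n))$. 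Hence $D = O(\min(m,n))$.

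Summing the two pieces over $1 \le d \le D$ then closes the argument. The main term contributes
\[ \frac{mn}{2} \sum_{d=1}^{D} \frac{\mu(d)}{d^2} = \frac{mn}{2}\left( \frac{1}{\zeta(2)} + O(1/D) \right), \]
since the tail of $\sum \mu(d)/d^2$ beyond~$D$ is $O(1/D)$; dividing by the area $mn/2$ turns this into $6/\pi^2 + O\bigl(1/\min(m,n)\bigr)$. The error term contributes $O\bigl( (m+n)\log D + D \bigr) = O\bigl( (m+n)\log(\min(m,n)) + \min(m,n)\bigr)$, and dividing by $mn/2$ gives a bound of the form $O\bigl( \tfrac{\log(\min(m,n))}{\min(m,n)} \bigr) + O\bigl( \tfrac{1}{\max(m,n)} \bigr)$. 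Both terms tend to~$0$ as $m,n \to \infty$, establishing the limit.

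The hard part, I expect, will be precisely this uniformity of the error estimate. A naive version using only the perimeter together with the crude range $d = O(\max(m,n))$ breaks down for long thin triangles, i.e.\ when $m$ and~$n$ have very different magnitudes, because then $\tfrac{(m+n)\log(\max)}{mn}$ need not vanish in the double limit. The sharper count $D = O(\min(m,n))$, coming from the width-at-height-one computation, is exactly what repairs this and makes the argument work regardless of how $m$ and~$n$ tend to infinity.
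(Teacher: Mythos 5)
Your skeleton --- M\"obius inversion over the scaled triangles $\tfrac1d T^\circ$, a perimeter-type error bound for each term, and a cap on the number of values of~$d$ that contribute --- is the same one the paper uses; note, though, that the paper does not actually prove this lemma (it is quoted from Curran--Witte with a \textsc{qed} in the statement), and the argument it does write out is the sketch of the explicit variant, \cref{TriangleCount}. The genuine gap is your bound $D = O(\min(m,n))$ on the contributing values of~$d$: it is derived from a false principle and is false in general. An open horizontal segment of length less than~$1$ can perfectly well contain an integer, so ``width at height~$1$ is at least~$1$'' is not a necessary condition for $\tfrac1d T^\circ$ to contain an interior lattice point. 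Concretely, for $(m,n,e)=(100,2,49)$ the point $(10,20)$ lies in the interior of $T(100,2,49)$ (its barycentric coordinates are $0.7$, $0.1$, $0.2$) and equals $10$ times the primitive point $(1,2)$, so $d=10$ contributes, whereas $mn/(m+n) < 2$. The source of the trouble is that $e$ is constrained only by $0 \le e < m$, so the triangle can lean far to the right of the base $[0,n]$; interior lattice points of $\tfrac1d T$ then have $x$-coordinates up to about $e/d$ rather than $n/d$, and only the bound $D < m$ survives in general. With $D$ as large as~$m$, your error term $(m+n)\log D/(mn)$ behaves like $(\log m)/n$, which need not vanish when $m$ grows much faster than~$n$ --- precisely the failure mode you flagged as the ``hard part.''

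When $0 \le e \le n$ (in particular in the Cartesian-product case $e=0$) the conclusion $D \le \min(m,n)$ is correct, but for a simpler reason: every point of $T$ then has $0 \le x \le n$ and $0 \le y \le m$, so an interior lattice point of $\tfrac1d T$ forces $1 \le n/d$ and $1 \le m/d$. With that substitution your Case-I-type computation goes through and does give the limit uniformly over such~$e$. For the remaining range of~$e$ the paper's proof of \cref{TriangleCount} first applies a shear in $\mathrm{SL}(2,\Z)$ (which preserves the lattice, primitivity, and area) to move $e$ into $[n-m,n]$, and then, if $e<-n$ still, cuts the triangle along the $y$-axis into two pieces and iterates, summing the estimates over the pieces. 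Your proof needs some such reduction --- or another device for long, strongly sheared triangles --- before the limit can be asserted uniformly in~$e$, which is the generality in which \cref{manyHP} uses it.
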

	
	\begin{cor} \label{manyHP}
		Let $N(G; a, b)$ be the number of \textup(unordered\textup) pairs $\{P, P'\}$ of arc-disjoint hamiltonian paths in $\cayd(G; a,b)$. If $m = o(a)$ and $n = |G|/o(a)$ are sufficiently large, then
		\[ N(G; a, b) > \frac{|G|^2}{10} . \]
	\end{cor}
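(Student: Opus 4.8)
The plan is to count \emph{ordered} arc-disjoint pairs $(P,P')$ and halve. The structural engine is \cref{iota=tau}: whenever $\cayd(G;a,b)$ has no hamiltonian cycle through~$P$ (equivalently, no arc from~$\tau$ to~$\iota$), the path $P$ has at most two arc-disjoint partners, one beginning at $\tau+a$ and one at $\tau+b$. Reading the proof of \cref{iota=tau} off the complement~$\overline P$, the partner beginning at $\tau+a$ exists exactly when one of $|G|-\delta_b(P)$, $|G|-1-\delta_b(P)$ is attained as the $b$-arc count of a hamiltonian path, and the partner beginning at $\tau+b$ exists exactly when one of $|G|-1-\delta_b(P)$, $|G|-2-\delta_b(P)$ is (here I would use \cref{samedelta} to decide hamiltonicity of each candidate extraction). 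Writing $\mathcal G\subseteq\{0,\dots,|G|-1\}$ for the set of attainable values of $\delta_b$ over all hamiltonian paths, the number of partners of~$P$ is therefore a function $c(k)\in\{0,1,2\}$ of $k=\delta_b(P)$ alone, with $c(k)\ge1$ precisely when one of $|G|-k,\,|G|-1-k,\,|G|-2-k$ lies in~$\mathcal G$.

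Next I would count hamiltonian paths by $b$-arc count. By \cref{SameNumCosetsAndD} and \cref{arcforcing}, a hamiltonian path with $\delta_b=k$ is specified by its terminal vertex ($|G|$ choices) together with which $t_k=\lfloor k/o(a-b)\rfloor$ of the $I-1$ non-terminal cosets travel by~$b$ (with $I=|G:\langle a-b\rangle|$), so there are exactly $|G|\binom{I-1}{t_k}$ of them. Cyclicity of a path depends only on $k$, so discarding the good values with $k\equiv0$ or $-1\pmod{o(a-b)}$ (whose paths close up into hamiltonian cycles, where \cref{iota=tau} fails) removes whole fibers and yields the clean lower bound
\[ N(G;a,b)\ \ge\ \frac{|G|}{2}\sum_{k\in\mathcal G'}\binom{I-1}{t_k}\,c(k), \]
where $\mathcal G'$ is the set of non-cyclic good values. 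By \cref{NumHP} and \cref{TriangleAsympCount}, $|\mathcal G|$ is one less than the number of primitive lattice points of $T(m,n,e)$, so $|\mathcal G|\sim\frac{6}{\pi^2}\Area\bigl(T(m,n,e)\bigr)=\frac{3}{\pi^2}|G|$ as $m,n\to\infty$. When $I$ is large the binomial weights already make the sum enormous (and if two arc-disjoint hamiltonian cycles happen to exist, deleting one arc from each gives far more than $|G|^2/10$ pairs outright); so the only delicate regime is $I$ small, the coprime Cartesian case $I=1$ being the extreme, where every weight is~$1$ and one needs $\sum_{k}c(k)>\tfrac15|G|$.

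The remaining inequality is where the real work lies. Note that $|G|-1-\mathcal G$ is the set of attainable values of $\delta_a$, which by the $a\leftrightarrow b$ symmetry is itself a good-value set—for the transposed data—so $c(k)\ge1$ asks precisely how densely the good-value set and its reflection interleave. The systematic failures are good values near the two extreme edge-directions of $T(m,n,e)$, whose reflections fall outside the feasible window $[\,n-1,\,m(n-1)\,]$ of \cref{dgap}; these number $O(m+n)=o(|G|)$ and are harmless once $m,n$ are large. For the bulk one must estimate the convolution $\mathbf 1_{\mathcal G}*\mathbf 1_{\mathcal G}$ at $|G|$ and $|G|-2$, i.e.\ count pairs of complementary good values, by counting pairs of matching primitive lattice points in $T(m,n,e)$ and its reflection. \textbf{This is the main obstacle:} the mere total from \cref{TriangleAsympCount} does not suffice, and one needs the stronger fact that the primitive points—and hence the good values—are spread regularly enough that a positive proportion of reflected pairs agree up to distance~$1$. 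Granting this equidistribution, $\sum_k c(k)\gtrsim\frac{3}{\pi^2}|G|$ comfortably clears $\tfrac15|G|$, and the weighted version goes through the same way for general~$I$, giving $N(G;a,b)>|G|^2/10$.
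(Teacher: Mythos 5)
You have the right skeleton---count pairs via attainable $\delta_b$-values, match a value $k$ with a value near $|G|-1-k$, invoke \cref{iflessthanone}, and multiply by $|G|$ translates---but there is a genuine gap exactly where you flag one. You write that ``the mere total from \cref{TriangleAsympCount} does not suffice'' and that one needs an equidistribution statement about how the set $\mathcal G$ of attainable $\delta_b$-values interleaves with its reflection $|G|-1-\mathcal G$; you then grant this without proof. As it stands the argument does not close: $\mathcal G$ has density only about $3/\pi^2\approx 0.304$ in $\{0,\dots,|G|-1\}$, so $\mathcal G$ and its reflection could a priori be essentially disjoint, and nothing you have established forces $\sum_k c(k)$ to be positive, let alone to exceed $\tfrac15|G|$.

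The missing idea is a thickening-plus-pigeonhole step, and no equidistribution is needed. Restrict to the values $r=t\,o(a-b)+u_t(k)$ for which $A_t(k)$ is an \emph{interior} primitive lattice point of $T(m,n,e)$; then any two distinct such values differ by at least $2$ (consecutive ones differ by $2h_t(k)\ge 2$ inside a triangle $T_t$, and by $2u_t(1)+1\ge 3$ across a boundary, using \cref{ut+ut} and $h_t(1)\ge 1$ for interior points). Consequently $R^+=R\cup(R+1)$ and $R^-=R\cup(R-1)$ each have exactly $2\#R\approx\frac{6}{\pi^2}|G|>0.6|G|$ elements, and two subsets of $\{0,\dots,|G|-1\}$ of that size must intersect in more than $0.2|G|$ elements. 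Each element of $R^+\cap\bigl(|G|-1-R^-\bigr)$ produces attainable values $k,k'$ with $k+k'\in\{|G|,|G|-1,|G|-2\}$, which is precisely the hypothesis of \cref{iflessthanone}; the $\pm1$ thickening is legitimate exactly because that proposition tolerates an error of one. With this step inserted, your outline becomes the paper's proof. (The paper also does not need your fiber count $|G|\binom{I-1}{t_k}$ or the case analysis on $I$: it gets the factor $|G|$ from translates, observing that distinct pairs from the construction have distinct $\delta_b$-values and hence cannot be translates of one another.)
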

	
	\begin{proof}
		Let
		\begin{align*}
			R 
			&= \left\{ t \, o(a - b) + u_t(k) 
			\mathrel{\Big|}
			\begin{matrix}
				0 \le t < |G : \langle a - b \rangle|,
				\quad
				1 \le k < f_t, \\
				\text{$A_t(k)$ is in the interior of $T(m,n,e)$}
			\end{matrix} \right\}
			\\&\subseteq \{1,2, \ldots, |G| - 2 \}
			. \end{align*}
		Also let
		\[ \text{$R^+ = R \cup (R + 1)$
			and
			$R^- = R \cup (R - 1)$} . \]
		
		We claim that $|r - r'| \ge 2$ for all distinct $r,r' \in R$. To see this, first recall that 
		\[ u_t(k) - u_t(k - 1) = 2 h_t(k) \ge 2 . \]
		Second, note that, by \cref{ut+ut}, we have
		\begin{align*}
			\bigl( t \, o(a - b) + u_t(1) \bigr) - \bigl( (t-1) \, o(a - b)  + u_{t-1}(f_{t-1} - 1) \bigr)
			&= 2u_t(1) + 1
			. \end{align*} 
		If $A_t(1)$ is in the interior of $T(m,n,e)$, then $h_t(1) \ge 1$. Since $u_t(1) = h_t(1)$, this implies $2u_t(1) + 1 > 2$, which completes the proof of the claim.
		
		From the claim, we see that
		\[\# R^+ 
		= \# R^- 
		= 2 \cdot \#R 
		\approx 2 \cdot \frac{6}{\pi^2} \Area \bigl( T(m,n,e) \bigr)
		= \frac{6}{\pi^2} |G|
		. \]
		Therefore
		\begin{align*}
			\# \bigl( R^+ \cap (|G| - 1 - R^-) \bigr)
			&\ge \# R^+ + \# R^- - |G|
			\\&\approx 2 \cdot \frac{6}{\pi^2} |G| - |G|
			\\&> 0.2 |G|
			. \end{align*}
		For each $r \in R$, we know from the definition of~$R$ that there is a hamiltonian path $P$, such that $\delta_b(P) = r$. Hence, for each $r$ in the above intersection, we have hamiltonian paths $P$ and~$P'$, such that 
		\begin{align*}
			\delta_b(P) &\in \{r, r-1\} \\
			\intertext{and}
			\delta_b(P') &\in 
			\{|G| - 1 - r, |G| - r \}
			. \end{align*}
		Therefore 
		\[ \delta_b(P) + \delta_b(P')
		\in \{|G|, |G| - 1, |G| - 2\}| , \]
		so we see from (the proof of) \fullcref{iflessthanone}{bb} that these hamiltonian paths can be made arc-disjoint.
		
		This provides more than $0.2 |G|$ ordered pairs of arc-disjoint hamiltonian paths. So the number of unordered pairs is more than $0.1 |G|$. Furthermore, if $\{P_1, P_1'\}$ and $\{P_2, P_2'\}$ are two such pairs, then we know from the construction that $\delta_b(P_1) \notin \{\delta_b(P_2), \delta_b(P_2')\}$, so $\{P_1, P_1'\}$ is not a translate of $\{P_2, P_2'\}$. Therefore, since each pair has $|G|$ translates, the total number of unordered pairs of arc-disjoint hamiltonian paths is more than $0.1 |G|^2$.
		
		(Actually, there is a slight technical issue that the translates of $\{P, P'\}$ might not all be distinct. However, this can only happen if $P'$ is a translate of~$P$, which implies that $\delta_b(P) = \delta_b(P')$. Since $P$ and~$P'$ are arc-disjoint, this determines $\delta_b(P)$ up to an error of at most~$1$. So this problem is avoided by deleting a small number of values of~$r$ that are near $|G|/2$.)
	\end{proof}
	
	The following variant of \cref{TriangleAsympCount} provides an explicit lower bound on $m$ and~$n$ when $6/\pi^2$ is replaced with the smaller constant $1/2$:
	
	\begin{lem}[cf.\ {\cite[Thm.~8.5]{Curran}}] \label{TriangleCount}
		Let\/ $\mathsf N(m,n,e)$ be the number of primitive lattice points in the interior of the triangle $T(m,n,e)$.
		If $m,n \ge 300$, then 
		\[ \mathsf N(m,n,e) > \frac{1}{2} \Area \bigl( T(m,n,e) \bigr) . \]
	\end{lem}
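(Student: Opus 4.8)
The plan is to count primitive interior lattice points by inclusion--exclusion on $\gcd(x,y)$, reducing everything to counting ordinary lattice points in scaled copies of the triangle. First I would record that $T(m,n,e)$ has vertices $(0,0)$, $(n,0)$, $(e,m)$, so $\Area\bigl(T(m,n,e)\bigr) = mn/2$ and the target inequality becomes $\mathsf N(m,n,e) > mn/4$. For $d \ge 1$ let $L_d$ be the number of interior lattice points $(x,y)$ of $T(m,n,e)$ with $d \mid x$ and $d \mid y$; scaling by $1/d$ puts these in bijection with the interior lattice points of the triangle $\tfrac1d T(m,n,e)$, whose area is $mn/(2d^2)$. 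Möbius inversion then gives
\[ \mathsf N(m,n,e) = \sum_{d\ge1}\mu(d)\,L_d , \]
whose expected main term $\sum_d \mu(d)\,mn/(2d^2) = mn/(2\zeta(2)) = 3mn/\pi^2 \approx 0.304\,mn$ sits comfortably above $mn/4$. This is just the explicit form of the asymptotic behind \cref{TriangleAsympCount}.

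Rather than track the full alternating sum, I would use the cruder but sufficient bound $\mathsf N(m,n,e) \ge L_1 - \sum_{p\text{ prime}} L_p$: the non-primitive interior points are exactly those divisible by some prime, so the prime sum overcounts them, and subtracting it still leaves a valid lower bound. The term $L_1$ is handled exactly by Pick's theorem, since $T(m,n,e)$ is a genuine lattice triangle; its number of boundary lattice points is $B = n + \gcd(e,m) + \gcd(n-e,m)$ (compare \cref{XtIsLatt}), whence $L_1 = mn/2 - B/2 + 1$. For each prime I would bound $L_p$ from above by its area $mn/(2p^2)$ plus a geometric error, using that every horizontal cross-section of $T(m,n,e)$ has width at most $n$, and that $L_p = 0$ as soon as $p \ge m$ (there is then no admissible $y$ with $p \mid y$ and $0 < y < m$). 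Summing produces a dominant contribution $\tfrac{mn}{2}\bigl(1 - \sum_p p^{-2}\bigr) \approx 0.274\,mn$, which exceeds $mn/4$ with a slack of about $0.024\,mn$. The numerical constants would be made rigorous with explicit Mertens-type estimates for $\sum_{p\le x}p^{-2}$ and $\sum_{p\le x}p^{-1}$, and I expect the threshold $m,n\ge 300$ to be exactly what is needed to absorb the lower-order error into this slack.

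The main obstacle is controlling the total error $\sum_p\bigl(L_p - mn/(2p^2)\bigr)$ \emph{uniformly}, and in particular handling ``thin'' triangles where one of $m,n$ vastly exceeds the other. A naive per-row estimate bounds each $L_p$ by its area plus the number of contributing rows, which is about $m/p$, and $\sum_{p<m} m/p \sim m\log\log m$ is far too large to fit inside a slack of order $mn$ when $m$ is enormous relative to $n$. The crux is therefore to replace this by a boundary-sensitive bound that exploits the specific lattice structure of $T(m,n,e)$: its slanted edges carry only $\gcd(e,m)$ and $\gcd(n-e,m)$ lattice points, and every cross-section has width $\le n$, so the deviation of $L_p$ from area should be governed by the \emph{boundary} of $\tfrac1pT$ rather than its perimeter, summing to something of lower order. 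I expect this uniform error estimate, rather than the clean main-term computation, to be the technical heart of the proof, and it is exactly the point where the explicit version here goes beyond the purely asymptotic statement.
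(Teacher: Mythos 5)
Your overall strategy---sieve out the non-primitive interior points, take the area of the scaled triangles as the main term, and control the error row by row---is the same as the paper's, and your main-term arithmetic is right ($\Area = mn/2$, target $mn/4$, main term $\tfrac{mn}{2}(1-\sum_p p^{-2}) \approx 0.274\,mn$). But the step you defer is not a technicality: it is the actual content of the lemma, and the bound you propose in its place genuinely fails. Truncating at $p < m$ and charging an error of order $m/p$ per prime gives a total error of order $m\log\log m$, and for skewed triangles this is not an overcount you can argue away by being ``boundary-sensitive.'' Concretely, take $e = m-1$: the cross-section of $T(m,n,m-1)$ at height $y=jp$ is the interval $\bigl(jp - \tfrac{jp}{m},\, jp - \tfrac{jp}{m} + n(1-\tfrac{jp}{m})\bigr)$, which contains the multiple $jp$ of~$p$ whenever $jp < nm/(n+1)$; hence $L_p \ge \tfrac{mn}{(n+1)p} - 1$ for essentially every prime $p < m$, and $\sum_p L_p \gtrsim m\log\log m$. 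Once $\log\log m$ exceeds roughly $n/4$, your lower bound $L_1 - \sum_p L_p$ drops below $mn/4$ (indeed below zero), so no refinement of the per-prime error estimate alone can rescue the argument for all $m,n \ge 300$.

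The missing idea is a reduction that makes the sum truncate at $k \le \min(m,n)$ rather than at $m$. The paper first treats the case $-n \le e \le n$: there every point of $T$ satisfies $\min(|x|,|y|) \le \min(m,n)$, so the scaled triangle $\tfrac1k T$ contains no off-axis lattice points once $k > \min(m,n)$, and the full M\"obius sum (over all $k$, not just primes) has total error at most $2\max(m,n)\bigl(1+\log\min(m,n)\bigr)$, which is $\tfrac{4(1+\log\min)}{\min}\Area(T)$ --- comfortably below $6/\pi^2 - 1/2$ when $\min(m,n)\ge 300$. For general $e$ (recall $0 \le e < m$ can be huge), it applies a unimodular shear $\begin{pmatrix}1&*\\0&1\end{pmatrix}$ to bring $e$ into $[n-m,n]$, and if the sheared $e$ is still below $-n$ it cuts the triangle along the $y$-axis into two pieces, handles one by the first case, and recurses on the other; counting only primitive points off the $y$-axis makes the pieces additive. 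This decomposition is exactly the device that replaces your hoped-for uniform error estimate, and without it (or something equivalent) the proof is incomplete.
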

	
	\begin{proof}[Sketch of proof \textup{(cf.\ \cite[proof of Thm.~8.5]{Curran})}]
		For every triangle~$T$, let $N(T)$ (resp.\ $P(T)$) be the number of lattice points (resp.\ primitive lattice points) that are in the interior of~$T$ and are not on the $y$-axis.
		
		\refstepcounter{startcases}
		\begin{case} \label{TriangleCountPf-smalle}
			Assume $-n \le e \le n$.
		\end{case}
		Then $\min \bigl( |x|, |y| \bigr) \le \min(m,n)$ for all $(x,y) \in T(m,n,e)$, so we have $N \bigl( \frac{1}{k} T \bigr) = 0$ for all $k > \min(m,n)$. Also, it is elementary to see that
		\[ \left| N \! \left( \frac{1}{k} \, T \right) - \Area \! \left( \frac{1}{k} \, T \right) \right| \le \frac{2 \, \max(m,n)}{k} \]
		for all $k \in \Z^+$.
		Therefore, letting $A = \Area(T)$ and $\mathsf{min} = \min(m,n)$, we have
		\begin{align*}
			P(T)
			&= \sum_{k=1}^{\mathsf{min}} \mu(k) \, N \! \left( \frac{1}{k} \, T \right)
			\\&= \sum_{k=1}^{\mathsf{min}} \mu(k) \left(  \frac{A}{k^2} \pm \frac{2 \, \max(m,n)}{k} \right)
			\\&= A \left( \frac{6}{\pi^2} - \sum_{k > \mathsf{min}} \frac{\mu(k)}{k^2} \right)
			\pm 2 \, \max(m,n) (1 + \log \mathsf{min})
			\\&> A \left( \frac{6}{\pi^2} - \frac{1}{\mathsf{min}} \right) - 2 \max(m,n) (1 + \log \mathsf{min})
			\\&= A \left( \frac{6}{\pi^2} - \frac{1}{\mathsf{min}} - \frac{4 (1 + \log \mathsf{min})}{\mathsf{min}} \right)
			. \end{align*}
		Since $\mathsf{min} \ge 300$ (in fact, $\mathsf{min} \ge 252$ would suffice), we have 
		\[ \frac{1}{\mathsf{min}} + \frac{4 (1 + \log \mathsf{min})}{\mathsf{min}} < 0.1079, \]
		so we conclude that $P(T) > 0.5 A$.
		
		\begin{case} \label{TriangleCountPf-general}
			The general case.
		\end{case}
		By applying an appropriate element $\begin{pmatrix} 1 & * \\ 0 & 1 \end{pmatrix}$ of $\mathrm{SL}(2,\Z)$, we may assume $n - m \le e \le n$. Since we may assume that \cref{TriangleCountPf-smalle} does not apply, we then have $n - m \le e < -n$. The $y$-axis divides $T(m,n,e)$ into two smaller triangles $T_1$ and~$T_1'$. Letting $p = mn/(n - e) \ge n$, we have
		\begin{itemize}
			\item the vertices of $T_1$ are $(0,0)$, $(n,0)$, and  $(0,p)$, 
			and 
			\item the vertices of $T_1'$ are $(0,0)$, $(0,p)$, and $(e,m)$. 
		\end{itemize}
		Since $p \ge n$, we see that \cref{TriangleCountPf-smalle} applies to the triangle~$T_1$, so
		\[ P(T_1) > \frac{1}{2} \Area(T_1) . \]
		
		Now, rotating $T_1'$ by $90^\circ$ clockwise around the origin yields the triangle $T_1''$ with vertices $(0,0)$, $(p,0)$ and $(m, -e)$.  Since $p > n$ and $-e > n$, this triangle satisfies the hypotheses of the proposition. Therefore, either \cref{TriangleCountPf-smalle} applies to $T_1''$, or the argument in the preceding paragraph divides $T_1''$ into two triangles $T_2$ and $T_2'$, such that \cref{TriangleCountPf-smalle} applies to $T_2$, and $T_2'$ satisfies the hypotheses of the proposition (after applying an appropriate element of $\mathrm{SL}(2,\Z)$). Continuing in this way, we see that $T(m,n,e)$ can be decomposed into a finite union
		\[ T = T_1 \cup T_2 \cup \cdots \cup T_\ell \]
		such that \cref{TriangleCountPf-smalle} applies to each~$T_i$ (after applying an appropriate element of $\mathrm{SL}(2,\Z)$). Therefore, we have $P(T_i) > \frac{1}{2} \Area(T_i)$ for each~$i$, so
		\[ P(T) 
		\ge \sum_i P(T_i)
		> \sum_i \frac{1}{2} \Area(T_i)
		= \frac{1}{2} \Area(T)
		. \qedhere \]
	\end{proof}
	
	\begin{prop} \label{LargeIndex}
		\Cref{ArcDisjointCayleyConj} is true in all cases where we have either $|G : \langle a \rangle| \ge 600$ or $|G : \langle b \rangle| \ge 600$.
	\end{prop}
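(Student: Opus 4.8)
The plan is to reduce to the case $n := |G : \langle a \rangle| \ge 600$ and then split on the size of $m := o(a)$, running the density count of \cref{manyHP} when $m$ is large and the interval-matching argument of \cref{main} when $m$ is small. Since the conclusion (the existence of two arc-disjoint hamiltonian paths) is symmetric under interchanging $a$ and~$b$, and the hypothesis $|G:\langle a\rangle|\ge 600$ or $|G:\langle b\rangle|\ge 600$ is likewise symmetric, I would first assume without loss of generality that $|G:\langle a\rangle|\ge 600$, i.e.\ $n\ge 600$; recall also that $m\ge 2$ because $a$ is nontrivial.

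Suppose first that $m\ge 300$. Then $m,n\ge 300$, so \cref{TriangleCount} gives $\mathsf N(m,n,e) > \tfrac12\Area\bigl(T(m,n,e)\bigr) = \tfrac{mn}{4} = \tfrac{|G|}{4}$. I would then repeat the argument of \cref{manyHP}, but with \cref{TriangleCount} in place of the asymptotic \cref{TriangleAsympCount}. The set $R$ of that proof is indexed by the primitive lattice points in the interior of $T(m,n,e)$ (each such point is $A_t(k)$ for a unique $(t,k)$ with $1\le k<f_t$), so $\#R = \mathsf N(m,n,e) > |G|/4$. Since the elements of $R$ are pairwise at distance $\ge 2$, the sets $R^+$ and $R^-$ of \cref{manyHP} satisfy $\#R^+ = \#R^- = 2\,\#R > |G|/2$, whence $\#\bigl(R^+ \cap (|G|-1-R^-)\bigr) \ge \#R^+ + \#R^- - |G| > 0$. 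This produces hamiltonian paths $P,P'$ with $\delta_b(P)+\delta_b(P') \in \{|G|,|G|-1,|G|-2\}$, and \fullcref{iflessthanone}{bb} then supplies two arc-disjoint hamiltonian paths.

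Suppose instead that $2\le m\le 299$. Because $0\le e<m\le 299$, we have $m+e\le 597<600\le n$, so $n\ge m+e$ and the clean form of \cref{dgap} is available; this inequality is exactly why the hypothesis is stated with the constant $600=2\cdot 300$. Mimicking the proof of \cref{main}, I would use \cref{dgap} to produce a hamiltonian path $P'$ whose value $\delta_a(P') = |G|-1-\delta_b(P')$ lies in the interval $\bigl[\,n-1,\ n-1+2\lfloor(n-1)/2\rfloor\,\bigr]$, and then invoke \cref{string} to produce a hamiltonian path $P$ with $\delta_b(P)$ within $1$ of $\delta_a(P')$; then $|\delta_b(P)-\delta_a(P')|\le 1$, so \fullcref{iflessthanone}{ba} finishes. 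The only $m$ for which \cref{dgap} cannot place $\delta_a(P')$ in this interval is $m=2$, which I would dispatch by hand: there $|G|=2n$, the path $H_0(n-1)$ has $\delta_b=n-1$, and \cref{string} supplies a second hamiltonian path whose $\delta_b$-value is $n-1$ or~$n$, so the two values sum to $|G|-2$ or $|G|-1$ and \fullcref{iflessthanone}{bb} applies.

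The hard part will be the small-$m$ regime. When $m$ is large the triangle $T(m,n,e)$ is ``fat'' and the lattice-point count of \cref{TriangleCount} comfortably exceeds $|G|/4$; but when $m<300$ the triangle is too thin for that count to beat $|G|/4$ (indeed for $m=2$ it only reaches roughly $|G|/4$, not more), so one is genuinely forced into the separate interval-matching argument. The delicate point there is verifying that the band of attainable large $\delta_b$-values furnished by \cref{dgap} — which has width $\approx n$ and requires $n\ge m+e$ — actually overlaps the step-$2$ target set coming from \cref{string}; the boundary case $m=2$ does not fit the generic matching and has to be checked directly as above.
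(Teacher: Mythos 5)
Your proof follows the paper's own argument essentially verbatim: the same reduction to $n = |G : \langle a \rangle| \ge 600$, the same split at $m = 300$, with \cref{TriangleCount} feeding the counting argument of \cref{manyHP} when $m \ge 300$, and \cref{dgap} combined with \cref{string} (as in the proof of \cref{main}) when $m < 300$. Your explicit patch for $m = 2$ --- where the stated range $k \ge n-1$ of \cref{dgap} does not reach the value of~$k$ that the matching argument requires, so one falls back on $H_0(n-1)$ together with \cref{string} --- is a correct and welcome detail that the paper's terse ``as in the proof of \cref{main}'' leaves implicit.
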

	
	\begin{proof}
		Assume, without loss of generality, that $|G : \langle a \rangle| \ge 600$.
		We consider two cases.
		
		\refstepcounter{startcases}
		\begin{case}
			Let $m \ge 300$.
		\end{case}
		Then one can see that by \cref{TriangleCount} we have $2 \, \mathsf N(m,n,e) > |G| / 2$, so the argument in the proof of \cref{manyHP} establishes that the number of pairs of arc-disjoint hamiltonian paths is greater than~$0$.
		
		\begin{case}
			Assume $m < 300$.
		\end{case}
		Since $e < m$ and $n = |G : \langle a \rangle| \ge 600$, this implies $n > m + e$. Hence, we see from \cref{dgap} that 
		for all~$k$, such that $n - 1 \le k \le m(n-1)$, there is a hamiltonian path~$P$ in $\cayd(G; a,b)$, such that 
		\[ k \le \delta_b(P) \le k + 2 \left\lfloor \frac{n - 1}{2} \right\rfloor. \]
		The desired conclusion can now be obtained by combining this with \cref{string}, as in the proof of \cref{main}.
	\end{proof}


	\section{\texorpdfstring{$2$}{2}-generated Cayley digraphs on infinite abelian groups} \label{2genInfinite}
	
	In this \lcnamecref{2genInfinite}, we study the natural analogue of \cref{arcforcing} for infinite digraphs. (Note that the hamiltonicity or hamiltonian decomposability of infinite Cayley (di)graphs has been studied by several authors, see \cite{BryantEtAl, ErdeLehner, ErdeLehnerPitz, jungreis1985infinite, circleabe, witte1984survey}.)
	The foundation of our results is the observation that the basic theory of the arc-forcing subgroup easily generalizes to the infinite case:
	
	\begin{lem}[cf.\ \fullcref{arcforcing}{nonterminal}] \label{coset_travel}
		Let $\{a,b\}$ be a $2$-element subset of a group~$G$ \textup(which may be infinite\textup).  If $P$ is a spanning subdigraph of $\cayd(G;a,b)$, such that the indegree and outdegree of every vertex is~$1$, then every right coset of $\langle a - b \rangle$ either travels by $a$ or travels by~$b$.\qed
	\end{lem}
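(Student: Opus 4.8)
The plan is to reduce the whole statement to a single biconditional: for every vertex~$x$, the vertex~$x$ travels by~$a$ if and only if $x + (a-b)$ travels by~$a$. Once this is in hand the \lcnamecref{coset_travel} follows immediately, because every element of a coset $g + \langle a-b \rangle$ has the form $g + k(a-b)$ for some $k \in \Z$, so a trivial induction in both directions shows that either all of these vertices travel by~$a$ or all of them travel by~$b$; that is exactly the assertion that the coset travels by~$a$ or travels by~$b$.

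First I would record the two elementary facts forced by the degree hypotheses. The outdegree hypothesis says that each vertex has exactly one out-edge, which is either its $a$-edge or its $b$-edge; hence every vertex travels by exactly one of $a$ and~$b$, and in particular ``travels by~$b$'' is synonymous with ``does not travel by~$a$''. The indegree hypothesis is the decisive one: the two potential in-edges of a vertex~$u$ emanate from $u - a$ (present precisely when $u-a$ travels by~$a$) and from $u - b$ (present precisely when $u-b$ travels by~$b$). Since $a \ne b$, these are distinct vertices, so the indegree of~$u$ is the sum of two indicators, and indegree~$1$ means that \emph{exactly one} of these two in-edges is present.

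I would then combine the two facts. Writing $x = u - a$, so that $u - b = x + (a-b)$, the indegree condition becomes: exactly one of ``$x$ travels by~$a$'' and ``$x + (a-b)$ travels by~$b$'' holds. Replacing the second alternative by its equivalent negation ``$x + (a-b)$ does \emph{not} travel by~$a$'', this ``exactly one'' statement is precisely the assertion that ``$x$ travels by~$a$'' and ``$x + (a-b)$ travels by~$a$'' have the same truth value. This is the desired biconditional, and applying its forward implication to move in the $+(a-b)$ direction and its reverse implication to move in the $-(a-b)$ direction shows that travelling by~$a$ is constant along each coset of $\langle a-b\rangle$.

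The only genuine subtlety, and the single point at which the infinite case departs from the finite \fullcref{arcforcing}{nonterminal} that it generalizes, is that I must produce an honest biconditional rather than merely the one-directional observation recorded just before \cref{arcforcing}. In the finite setting a one-way implication is enough, because the coset is finite and cyclic, so iterating $x \mapsto x + (a-b)$ eventually returns to~$x$ and sweeps out the entire coset; when $\langle a-b\rangle$ is infinite this wrap-around is unavailable, and propagation in the negative direction genuinely needs the reverse implication. Fortunately, reading the indegree-exactly-one condition as an exclusive ``or'' of two statements yields both implications simultaneously, so no additional argument is required and the proof is complete.
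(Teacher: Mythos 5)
Your proof is correct. The paper states this lemma with no proof at all (it is given as a routine generalization of the finite-case \fullcref{arcforcing}{nonterminal}), and your argument supplies exactly the intended reasoning; in particular, you correctly identify the one genuine subtlety, namely that the ``no vertex has indegree~$2$'' observation preceding \cref{arcforcing} only propagates travelling-by-$a$ in the $+(a-b)$ direction, so the full indegree-exactly-$1$ hypothesis (ruling out indegree~$0$ as well) is what yields the reverse implication needed when the coset is infinite.
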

	
	This has the following easy consequence, which is the infinite analogue of \fullcref{HCiff}{Rankin}:
	
	\begin{prop} \label{InfiniteHam}
		Let $\{a,b\}$ be a $2$-element generating set of an abelian group~$G$ \textup(such that $a \neq b$\textup), and let $I = |G : \langle a - b \rangle|$. 
		\begin{enumerate} \setcounter{enumi}{-1}
			\item \label{InfiniteHam-index}
			If $\cayd(G; a,b)$ has a two-way infinite hamiltonian path, then $I < \infty$.
			\item \label{InfiniteHam-assumeHP}
			Suppose $P$ is a two-way infinite hamiltonian path in $\cayd(G; a, b)$. If $k$ is the number of cosets of $\langle a - b \rangle$ that travel by~$a$, and $\ell$ is the number of cosets of $\langle a - b \rangle$ that travel by~$b$, then $k + \ell = I$, and $\langle k a + \ell b \rangle = \langle a - b \rangle$. 
			\item \label{InfiniteHam-constructHP}
			Conversely, suppose $k + \ell = I < \infty$ and $\langle k a + \ell b \rangle = \langle a - b \rangle$. If $P$ is any spanning subdigraph of $\cayd(G; a, b)$, such that the outdegree of every vertex is~$1$, and exactly $k$~cosets of $\langle a - b \rangle$ travel by~$a$ and exactly $\ell$~cosets of $\langle a - b \rangle$ travel by~$b$, then $P$ is a two-way infinite hamiltonian path in $\cayd(G; a, b)$.
		\end{enumerate}
	\end{prop}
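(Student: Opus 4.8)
The plan is to work in the quotient $\bar G := G/\langle a-b\rangle$ by the arc-forcing subgroup. Since $a \equiv b \pmod{\langle a-b\rangle}$, this quotient is cyclic, generated by the common image $\bar a = \bar b$, and \emph{every} edge of $\cayd(G;a,b)$ (whether an $a$-edge or a $b$-edge) moves a vertex's coset by $+\bar a$. Hence, along any two-way infinite hamiltonian path $\ldots, v_{-1}, v_0, v_1, \ldots$, the cosets satisfy $\bar v_i = \bar v_0 + i\,\bar a$. This single identity, together with \cref{coset_travel}, is the engine for all three parts; I use throughout that $G$ is infinite (a two-way infinite hamiltonian path needs infinitely many vertices), so that $\langle a-b\rangle$ is infinite whenever $I<\infty$, while $a\ne b$ keeps it nontrivial. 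Part~\ref{InfiniteHam-index} is then immediate: if $I=\infty$ then $\bar a$ has infinite order, so $i\mapsto \bar v_0+i\bar a$ is injective and each coset of $\langle a-b\rangle$ contains at most one~$v_i$; a hamiltonian path would then have to exhaust a coset with a single vertex, forcing every coset to be a singleton, i.e.\ $a-b=0$, contradicting $a\ne b$. Thus $I<\infty$.

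For part~\ref{InfiniteHam-assumeHP}, the equality $k+\ell=I$ is immediate from \cref{coset_travel}, since each of the $I$ cosets travels by exactly one of $a,b$. The key step is a period-shift identity. Because $\bar a$ has order~$I$, each block of $I$ consecutive steps $v_i\to v_{i+1}\to\cdots\to v_{i+I}$ passes through all $I$ cosets exactly once; as the step out of a vertex is dictated solely by the travel-type of its coset, exactly $k$ of these steps are $a$-edges and $\ell$ are $b$-edges, so
\[ v_{i+I}-v_i = ka+\ell b =: c \qquad \text{for all } i. \]
Consequently the path-vertices lying in the coset $v_0+\langle a-b\rangle$ are exactly $\{v_{jI}\mid j\in\Z\}=v_0+\langle c\rangle$, and hamiltonicity forces this set to be the entire coset; hence $\langle c\rangle=\langle a-b\rangle$, which is the asserted identity $\langle ka+\ell b\rangle=\langle a-b\rangle$.

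For the converse, part~\ref{InfiniteHam-constructHP}, the hypothesis $k+\ell=I$ together with the travel-types determines $P$ completely (a vertex's out-edge is forced by the type of its coset), and the argument is insensitive to which cosets are chosen. I would first verify that every vertex also has indegree~$1$: the two candidate predecessors $w-a$ and $w-b$ of a vertex~$w$ lie in one common coset (they differ by $a-b$), and precisely the one matching that coset's travel-type is an actual predecessor. So $P$ is a disjoint union of finite directed cycles and two-way infinite directed paths. The same period-shift identity $v_{i+I}=v_i+c$ holds here, and since $c=ka+\ell b$ generates the infinite group $\langle a-b\rangle$ it has infinite order; thus no orbit can close up (a cycle would need $jc=0$ with $j\ne 0$), ruling out finite cycles. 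Finally, a single orbit $\{v_i\}$ meets each coset $v_r+\langle a-b\rangle$ in $v_r+\langle c\rangle=v_r+\langle a-b\rangle$, the entire coset, so one orbit already contains every vertex of~$G$. Hence $P$ is a single two-way infinite hamiltonian path.

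The main obstacle is the period-shift identity $v_{i+I}=v_i+(ka+\ell b)$ together with the dictionary it sets up between the combinatorial condition ``the path (or orbit) covers each coset'' and the algebraic condition $\langle ka+\ell b\rangle=\langle a-b\rangle$. Once the projection to $\bar G$ and this displacement are in hand, both directions are bookkeeping; the point that requires genuine care is using $a\ne b$ (equivalently, the infinitude of $\langle a-b\rangle$) to turn ``visits each coset once'' into a contradiction in part~\ref{InfiniteHam-index} and to exclude finite cycles in part~\ref{InfiniteHam-constructHP}.
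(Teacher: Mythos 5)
Your proof is correct and follows essentially the same route as the paper: project to the cyclic quotient $G/\langle a-b\rangle$, derive the displacement identity $v_{i+I}-v_i = ka+\ell b$, and translate coverage of the coset $v_0+\langle a-b\rangle$ into the condition $\langle ka+\ell b\rangle=\langle a-b\rangle$. Your part~(\ref{InfiniteHam-constructHP}) is in fact slightly more careful than the paper's, since you explicitly check that every vertex has indegree~$1$ and rule out finite cycle components, points the paper leaves implicit.
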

	
	\begin{proof}[Proof \upshape(cf.\ {\cite[Thm.~4]{Rankin})}]
		(\ref{InfiniteHam-index})
		Let $P = \ldots, v_{-2}, v_{-1}, v_0, v_1, v_2, \ldots$ be a two-way infinite hamiltonian path in $\cayd(G; a,b)$. Assume, without loss of generality, that $v_0 = 0$. Then, since $a \equiv b \pmod{\langle a - b \rangle}$, we have $v_i \in ia + \langle a - b \rangle$ for every $i \in \Z$. If $I = \infty$, this implies that each $v_i$ is in a different coset of $\langle a - b \rangle$. Since $\{v_i\}$ is a list of all of the elements of~$G$, we conclude that $v_i$ is the only element of its coset, so each coset has only one element. This means $|\langle a - b \rangle| = 1$, which contradicts the fact that $a \neq b$.
		
		(\ref{InfiniteHam-assumeHP})
		Since every coset of $\langle a - b \rangle$ travels by either $a$ or~$b$ (but not both), we have $k + \ell = I$. Now, write $P = \ldots, v_{-2}, v_{-1}, v_0, v_1, v_2, \ldots$, and assume, without loss of generality, that $v_0 = 0$. Then 
		\[ v_i \in \langle a - b \rangle \iff \text{$i$ is divisible by~$I$} \]
		and
		\[ \text{$v_{jI} = j(ka + \ell b)$ for every $j \in \Z$} . \]
		Since $\{v_i\}_{i \in \Z}$ is a list of all of the elements of~$G$, this implies that $\{j(ka + \ell b)\}$ is a list of all of the elements of $\langle a - b \rangle$, which means that $ka + \ell b$ generates $\langle a - b \rangle$.
		
		(\ref{InfiniteHam-constructHP})
		Let $\ldots, v_{-2}, v_{-1}, v_0, v_1, v_2, \ldots$ be the path component of~$P$ that contains~$0$ (with $v_0 = 0$). We wish to show that every vertex is in this component.
		
		Suppose $v \in G$. Since
		\[ \langle a, k a + \ell b \rangle
		= \langle a, a - b \rangle
		= \langle a, b \rangle, \]
		we have $v \in ia + \langle k a + \ell b \rangle$ for some $i \in \Z$. Since $v_i \equiv ia \pmod{a - b}$, this implies there exists $j \in \Z$, such that 
		\[ v = v_i + j(k a + \ell b) = v_{i + jI} . \qedhere \]
	\end{proof}
	
	The following observation could easily be proved directly, but we present it as a simple application of \cref{InfiniteHam}.
	
	\begin{cor} \label{ZxZm}
		If $\{e_1, e_2\}$ is the standard generating set of $\Z \times \Z_m$ \textup(and $m \ge 2$\textup), then the Cayley digraph\/ $\cayd(\Z \times \Z_m;e_1,e_2)$ has a unique two-way infinite hamiltonian path, up to translations.
	\end{cor}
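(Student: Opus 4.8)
The plan is to apply \cref{InfiniteHam} with $a = e_1 = (1,0)$ and $b = e_2 = (0,1)$, so that $a - b = (1,-1)$. First I would identify the arc-forcing subgroup and its index. The homomorphism $\Z \times \Z_m \to \Z_m$ given by $(x,y) \mapsto x + y$ is surjective, and its kernel is exactly $\langle (1,-1) \rangle$, so $\langle a - b \rangle$ is infinite cyclic and $I \coloneqq |G : \langle a - b \rangle| = m < \infty$. In particular, \fullcref{InfiniteHam}{index} does not obstruct existence.

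Next I would pin down the ``travel profile'' forced on any such path. By \fullcref{InfiniteHam}{assumeHP}, if $k$ and $\ell = m - k$ are the numbers of cosets that travel by~$a$ and by~$b$, then $\langle k a + \ell b \rangle = \langle a - b \rangle$. Since $\ell \equiv -k \pmod{m}$, we have $k a + \ell b = (k,\ -k \bmod m) = k \cdot (1,-1)$, and this generates the infinite cyclic group $\langle (1,-1) \rangle$ if and only if $k = \pm 1$. As $0 \le k \le m$, the only possibility is $k = 1$ and $\ell = m - 1$: exactly one coset of $\langle a - b \rangle$ travels by~$a$, and the remaining $m - 1$ cosets travel by~$b$.

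The key structural point is that this makes the path rigid. By \cref{coset_travel}, every coset travels uniformly (the within-coset freedom available to a finite spanning quasi-path in its terminal coset simply does not arise in the two-way infinite setting), and in a two-way infinite hamiltonian path every vertex has outdegree~$1$; hence the out-edge of each vertex, and therefore the entire path, is determined once we specify which single coset travels by~$a$. Conversely, for each of the $m$ cosets, assigning it to travel by~$a$ and the rest to travel by~$b$ produces a spanning subdigraph of outdegree~$1$ with profile $(k,\ell) = (1, m-1)$, which is a two-way infinite hamiltonian path by \fullcref{InfiniteHam}{constructHP} (this simultaneously establishes existence). Thus there are exactly $m$ such hamiltonian paths.

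Finally I would observe that these $m$ paths form a single translation orbit. Translation by $g \in G$ carries the path whose ``$a$-coset'' is $C$ to the path whose ``$a$-coset'' is $C + g$, and the action of $G$ on the cosets factors through $G / \langle a - b \rangle \cong \Z_m$, which acts regularly, hence transitively, on the $m$ cosets. Therefore any two of the $m$ paths are translates of one another, which gives uniqueness up to translation. The only step demanding genuine care is the computation that $k a + \ell b = k\,(1,-1)$ generates $\langle a - b \rangle$ precisely when $k = 1$; everything else is bookkeeping with \cref{InfiniteHam,coset_travel}.
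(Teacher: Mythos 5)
Your proposal is correct and follows essentially the same route as the paper: apply \cref{InfiniteHam} to get existence with $(k,\ell)=(1,m-1)$, deduce from $\langle k e_1+\ell e_2\rangle=\langle e_1-e_2\rangle$ that $k=1$ for any such path, and conclude uniqueness up to translation because the path is determined by the single coset travelling by~$e_1$ and $G$ permutes the cosets transitively. The only cosmetic difference is that you argue $k\cdot(1,-1)$ generates the infinite cyclic group $\langle(1,-1)\rangle$ only for $k=\pm1$, whereas the paper notes that the projection of $\langle(k,\ell)\rangle$ to $\Z$ must be surjective; these are the same observation.
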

	
	\begin{proof}
		(existence) 
		Let $k = 1$ and $\ell = m - 1$. Then 
		\[ k + \ell = m = | \, \Z \times \Z_m : \langle e_1 - e_2 \rangle \,| \]
		and 
		\[ ke_1 + \ell e_2 = e_1 + (m - 1) e_2 = e_1 - e_2 , \]
		so \fullcref{InfiniteHam}{constructHP} tells us that $\cayd(\Z \times \Z_m;e_1,e_2)$ has a two-way infinite hamiltonian path.
		
		(uniqueness)
		Let $k$ and~$\ell$ be as in \fullcref{InfiniteHam}{assumeHP}. Then
		\[ \langle (k, \ell) \rangle
		= \langle k e_1 + \ell e_2 \rangle
		= \langle e_1 - e_2 \rangle
		= \langle (1, -1) \rangle
		, \]
		so the projection of this subgroup to~$\Z$ is surjective. We conclude that $k = 1$. This means that precisely one coset of $\langle a - b \rangle$ travels by~$e_1$ (and all other cosets travel by~$e_2$). Therefore, a two-way infinite hamiltonian path is determined by choosing which coset of $\langle e_1 - e_2 \rangle$ travels by~$e_1$. Since cosets are translates of each other, this implies that all two-way infinite hamiltonian paths are translates of each other.
	\end{proof}
	
	\begin{cor} \label{InfiniteADgeneral}
		Let $\{a,b\}$ be a $2$-element generating set of an abelian group~$G$ \textup(such that $a \neq b$\textup), and let $I = |G : \langle a - b \rangle|$. The digraph $\cayd(G; a,b)$ has two arc-disjoint two-way infinite hamiltonian paths if and only if $I < \infty$ and there exist $k, \ell \in \bigl\{ 0,1,\ldots, I \bigr\}$, such that $k + \ell = I$ and 
		\[ \langle a - b \rangle = \langle k a + \ell b \rangle = \langle \ell a + k b \rangle . \]
	\end{cor}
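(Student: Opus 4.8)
The plan is to deduce everything from \cref{InfiniteHam}, which already describes a single two-way infinite hamiltonian path by counting how many cosets of $\langle a - b \rangle$ travel by~$a$ and how many travel by~$b$. The structural fact that makes this possible is \cref{coset_travel}: in any spanning subdigraph in which every vertex has indegree and outdegree~$1$ (in particular, in any two-way infinite hamiltonian path), each coset of $\langle a - b \rangle$ travels \emph{entirely} by~$a$ or \emph{entirely} by~$b$. Thus such a path is encoded by a partition of the $I$ cosets into an ``$a$-part'' and a ``$b$-part,'' and the central observation I would establish is that two such paths $P$ and~$P'$ are arc-disjoint \emph{if and only if} these partitions are complementary, i.e.\ every coset lies in the $a$-part of exactly one of $P,P'$. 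Indeed, if some coset~$C$ travels by the same generator~$s$ in both paths, then for any $v \in C$ the arc $(v, v+s)$ lies in both, contradicting arc-disjointness; conversely, complementary partitions force $P$ and~$P'$ to disagree on the generator used by every coset, and since each arc is the unique out-edge of its tail, the two arc sets are then disjoint.

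For the forward direction, suppose $\cayd(G;a,b)$ has two arc-disjoint two-way infinite hamiltonian paths $P$ and~$P'$. Then $I < \infty$ by \fullcref{InfiniteHam}{index}. Let $k$ (resp.~$\ell$) be the number of cosets traveling by~$a$ (resp.~$b$) in~$P$, and let $k', \ell'$ be the analogous counts for~$P'$. By the complementarity just described, $k' = \ell$ and $\ell' = k$. Applying \fullcref{InfiniteHam}{assumeHP} to~$P$ gives $k + \ell = I$ and $\langle k a + \ell b \rangle = \langle a - b \rangle$, while applying it to~$P'$ gives $\langle k' a + \ell' b \rangle = \langle \ell a + k b \rangle = \langle a - b \rangle$. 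These are exactly the two required generation conditions.

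For the reverse direction, assume $I < \infty$ and that $k, \ell \in \{0,1,\ldots,I\}$ satisfy $k + \ell = I$ and $\langle k a + \ell b \rangle = \langle \ell a + k b \rangle = \langle a - b \rangle$. I would fix any set~$S$ of exactly $k$ of the $I$ cosets of $\langle a - b \rangle$, and define $P$ to be the spanning subdigraph in which the cosets in~$S$ travel by~$a$ and all other cosets travel by~$b$, and $P'$ to be the one in which the cosets in~$S$ travel by~$b$ and all other cosets travel by~$a$. Each of $P$ and~$P'$ gives every vertex outdegree~$1$. Since $P$ has $k$ cosets traveling by~$a$ and $\ell$ traveling by~$b$, and $\langle k a + \ell b \rangle = \langle a - b \rangle$, \fullcref{InfiniteHam}{constructHP} shows $P$ is a two-way infinite hamiltonian path; since $P'$ has $\ell$ cosets traveling by~$a$ and $k$ traveling by~$b$, the hypothesis $\langle \ell a + k b \rangle = \langle a - b \rangle$ lets \fullcref{InfiniteHam}{constructHP} show $P'$ is one as well. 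By construction the partitions of $P$ and~$P'$ are complementary, so $P$ and~$P'$ are arc-disjoint.

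The only genuinely substantive step is the equivalence between arc-disjointness and complementary coset-partitions; everything else is an application of \cref{InfiniteHam} plus the bookkeeping $k' = \ell$, $\ell' = k$. That equivalence in turn rests entirely on the coset-purity supplied by \cref{coset_travel} --- without it a coset could travel by both generators and the clean complementary description would fail --- so I expect \cref{coset_travel} to be doing the real work behind the scenes.
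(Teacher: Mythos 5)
Your proposal is correct and follows essentially the same route as the paper: both directions reduce to \cref{InfiniteHam} via the observation that arc-disjointness of two such paths is equivalent to the complementarity of their coset partitions (which rests on \cref{coset_travel}), and the reverse direction uses the identical swap construction. The only difference is that you spell out the arc-disjointness/complementarity equivalence in more detail than the paper, which simply asserts it; this is a presentational refinement, not a different argument.
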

	
	\begin{proof}
		($\Rightarrow)$ Let $P$ be a two-way infinite hamiltonian path, and let $k$ and~$\ell$ be as in \fullcref{InfiniteHam}{assumeHP}, so $k + \ell = I$ and $\langle k a + \ell b \rangle = \langle a - b \rangle$. If $P'$ is a two-way infinite hamiltonian path that is arc-disjoint from~$P$, then the cosets that travel by~$a$ in~$P'$ are the cosets that travel by~$b$ in~$P$, so the number of cosets that travel by~$a$ in~$P'$ is~$\ell$, and the number of cosets that travel by~$b$ is~$k$. So \fullcref{InfiniteHam}{assumeHP} tells us that $\langle \ell a + k b \rangle = \langle a - b \rangle$.
		
		($\Leftarrow$) Choose $k$ cosets of $\langle a - b \rangle$. Let $P$ be the spanning subdigraph in which the outdegree of every vertex is~$1$, and these particular $k$~cosets of $\langle a - b \rangle$ travel by~$a$, and the other $\ell$~cosets travel by~$b$.  Then let $P'$ be the spanning subdigraph in which the outdegree of every vertex is~$1$, and the $k$ chosen cosets of $\langle a - b \rangle$ travel by~$b$, and the other $\ell$~cosets travel by~$a$. It is clear from the construction that $P'$ is arc-disjoint from~$P$ (since a vertex travels by~$a$ in~$P'$ if and only if it travels by~$b$ in~$P$). Furthermore, we see from \fullcref{InfiniteHam}{constructHP} that $P$ and~$P'$ are two-way infinite hamiltonian paths.
	\end{proof}
	
	This can be made much more explicit:
	
	\begin{prop} \label{infiniteAD}
		Assume $G$ is an infinite abelian group.
		\begin{enumerate}
			\item \label{infiniteAD-ab}
			There exist $a,b \in G$, such that $\cayd(G; a, b)$ has a two-way infinite hamiltonian path if and only if $G$ is isomorphic to either $\Z$ or $\Z \times \Z_m$, for some $m \ge 2$.
			\item \label{infiniteAD-Z}
			For $a,b \in \Z$, the Cayley digraph $\cayd(\Z; a,b)$ has two arc-disjoint two-way infinite hamiltonian paths if and only if $a$ and~$b$ are odd, and
			\[ \text{either $\{a,b\} = \{1, -1\}$ or $a + b = \pm 2$}. \]
			\item \label{infiniteAD-ZxZm}
			For $a,b \in \Z \times \Z_m$, with $m \ge 2$, the Cayley digraph $\cayd(\Z \times \Z_m; a,b)$ has two arc-disjoint two-way infinite hamiltonian paths if and only if either
			\begin{enumerate}
				\item \label{infiniteAD-ZxZm-xy}
				$\{a,b\} = \{ (1, x), (-1, y)\}$, for some $x,y \in \Z_m$, such that $\langle x + y \rangle = \Z_m$,
				or
				\item \label{infiniteAD-ZxZm-2}
				$m = 2$, $a = (0,1)$, and $b \in \{\pm 1\} \times \Z_2$, perhaps after interchanging $a$ and~$b$.
			\end{enumerate}
		\end{enumerate}
	\end{prop}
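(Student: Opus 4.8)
The plan is to handle the three parts in order, converting each combinatorial question into an arithmetic one by means of the criteria already established for the infinite setting.

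For part~(\ref{infiniteAD-ab}), the forward implication rests on \fullcref{InfiniteHam}{index}: a two-way infinite hamiltonian path forces $I = |G : \langle a-b\rangle| < \infty$, and since a spanning path is connected we also have $\langle a,b\rangle = G$ (so $0 \in \langle a,b\rangle$ gives $G = \langle a,b\rangle$). Thus $G$ is a $2$-generated abelian group containing the infinite cyclic subgroup $\langle a-b\rangle$ with finite index, so $G$ is finitely generated of torsion-free rank~$1$; hence $G \cong \Z \times F$ with $F$ finite. Because $G$ is $2$-generated, each quotient $G/pG$ is generated by at most two elements for every prime~$p$, which forces $F$ to be cyclic, giving $G \cong \Z$ or $G \cong \Z \times \Z_m$. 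For the converse I would simply exhibit generators: for $\Z$ the list $\ldots,-1,0,1,\ldots$ of all $a$-edges is a two-way infinite hamiltonian path of $\cayd(\Z;1,b)$, and for $\Z \times \Z_m$ existence is exactly \cref{ZxZm}.

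For parts~(\ref{infiniteAD-Z}) and~(\ref{infiniteAD-ZxZm}) the engine is \cref{InfiniteADgeneral}: two arc-disjoint two-way infinite hamiltonian paths exist iff $I < \infty$ and there are $k,\ell \ge 0$ with $k+\ell = I$ and $\langle a-b\rangle = \langle ka+\ell b\rangle = \langle \ell a + kb\rangle$. The key simplification is that $ka+\ell b \equiv (k+\ell)a = Ia \equiv 0 \pmod{\langle a-b\rangle}$, so both $ka+\ell b$ and $\ell a+kb$ lie automatically in the infinite cyclic group $\langle a-b\rangle$; as that group has only the two generators $\pm(a-b)$, the conditions become the pair of equations $ka+\ell b = \epsilon_1(a-b)$ and $\ell a + kb = \epsilon_2(a-b)$ with $\epsilon_1,\epsilon_2 \in \{\pm1\}$. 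Adding them gives $I(a+b) = (\epsilon_1+\epsilon_2)(a-b)$ and subtracting gives $(k-\ell)(a-b) = (\epsilon_1-\epsilon_2)(a-b)$, and these two relations drive the rest of the analysis.

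In part~(\ref{infiniteAD-Z}) we have $G=\Z$ and $I=|a-b|$, so the first relation reads $\mathrm{sign}(a-b)\,(a+b) = \epsilon_1+\epsilon_2 \in \{-2,0,2\}$, i.e.\ $a+b \in \{0,\pm2\}$, while the second pins down $k,\ell$. Using $\gcd(a,b)=1$ (forced by $\langle a,b\rangle=\Z$) one checks that $a+b=0$ forces $\{a,b\}=\{1,-1\}$, whereas $a+b=\pm2$ forces $a,b$ odd; both families are realizable, which is exactly the stated condition. In part~(\ref{infiniteAD-ZxZm}) I would work coordinatewise on $\Z\times\Z_m$, writing $a=(a_1,a_2)$, $b=(b_1,b_2)$. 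Finiteness of $I$ is equivalent to $a_1\ne b_1$, and a short Smith-normal-form computation gives $I = m\,|a_1-b_1|$ (independent of $a_2-b_2$). The first-coordinate part of the additive relation becomes $m\,\mathrm{sign}(a_1-b_1)(a_1+b_1) = \epsilon_1+\epsilon_2 \in \{-2,0,2\}$, hence $m\,|a_1+b_1|\le 2$. This splits into the case $a_1+b_1=0$, available for every~$m$, where generation forces $a_1=\pm1$ and the leftover second-coordinate and generation constraints collapse to $\langle x+y\rangle=\Z_m$, giving alternative~(\ref{infiniteAD-ZxZm-xy}); and the case $m=2$ with $a_1+b_1=\pm1$, giving alternative~(\ref{infiniteAD-ZxZm-2}).

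The main obstacle is the $m=2$ branch of part~(\ref{infiniteAD-ZxZm}). Here $\epsilon_1=\epsilon_2$ and $k=\ell=|a_1-b_1|$, and because $a_1+b_1$ odd makes $|a_1-b_1|$ odd, the second-coordinate equations in $\Z_2$ become automatic; so the real content is to pin down exactly which generating sets with $a_1+b_1=\pm1$ occur and to match them with the normal form in~(\ref{infiniteAD-ZxZm-2}). This is the delicate step: it requires combining the generation condition (a determinant-type congruence modulo~$2$ in the two coordinates) with the allowed reductions of translating and interchanging $a$ and~$b$, and verifying that no further solutions survive. Once the two governing equations $ka+\ell b=\epsilon_1(a-b)$ and $\ell a + kb=\epsilon_2(a-b)$ are in hand, every other step is routine bookkeeping.
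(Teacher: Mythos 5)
Your overall strategy coincides with the paper's: part~(\ref{infiniteAD-ab}) via \fullcref{InfiniteHam}{index} and the structure theorem, and parts~(\ref{infiniteAD-Z}) and~(\ref{infiniteAD-ZxZm}) by translating everything into the arithmetic criterion of \cref{InfiniteADgeneral}. Your one real variation --- observing that $ka+\ell b$ and $\ell a+kb$ automatically lie in the infinite cyclic group $\langle a-b\rangle$, so that the subgroup conditions become the exact equations $ka+\ell b=\epsilon_1(a-b)$ and $\ell a+kb=\epsilon_2(a-b)$ with $\epsilon_i=\pm1$ --- is cleaner than the paper's device of projecting to $\Z$ and recovering the torsion coordinate afterwards, and it carries parts~(\ref{infiniteAD-Z}) and~(\ref{infiniteAD-ZxZm-xy}) through correctly.

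The step you defer, however, is a genuine gap, and it is precisely the point where the paper's own argument is also in trouble. Your two governing relations constrain $a_1+b_1$ (via $I(a+b)=(\epsilon_1+\epsilon_2)(a-b)$) and force $k=\ell=|a_1-b_1|$ in the branch $m=2$, $a_1+b_1=\pm1$, but they place no constraint at all on $a_1-b_1$; and, as you yourself note, the second-coordinate equations in $\Z_2$ are then automatic because $k$ is odd. Consequently \emph{every} generating pair with $m=2$ and $a_1+b_1=\pm1$ satisfies the criterion of \cref{InfiniteADgeneral}, not only those with $\{a_1,b_1\}=\{0,\pm1\}$. Concretely, $a=(3,0)$ and $b=(-2,1)$ generate $\Z\times\Z_2$, we have $a-b=(5,1)$ and $I=10$, and $k=\ell=5$ gives $5a+5b=(5,1)=a-b=\ell a+kb$, so \cref{InfiniteADgeneral} produces two arc-disjoint two-way infinite hamiltonian paths even though this pair is of neither form (\ref{infiniteAD-ZxZm-xy}) nor (\ref{infiniteAD-ZxZm-2}). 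So the ``delicate matching'' you postpone cannot be carried out: the honest output of your equations is ``$m=2$, $a_1+b_1=\pm1$, and $\{a,b\}$ generates,'' which is strictly weaker than the stated normal form. (The paper's proof slips past this at the line ``Assume now that $m=2$ (and $\overline I=1$)'': the equality $\overline I=1$ is never derived from $m(\overline a+\overline b)=2$, and it fails in the example above.)
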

	
	\begin{proof}
		
		\medbreak
		
		(\ref{infiniteAD-ab} $\Leftarrow$) It is obvious that $\cayd(\Z; 1, -1)$ has a two-way infinite hamiltonian path. The remaining case is immediate from \cref{ZxZm}.
		
		\medbreak
		(\ref{infiniteAD-Z} $\Leftarrow$) If $a , b \in \{\pm 1\}$, then $\cayd(\Z; a)$ and $\cayd(\Z; b)$ are two arc-disjoint two-way infinite hamiltonian paths in $\cayd(\Z; a, b)$. 
		
		We may now assume $a + b = \pm 2$, and $a \neq b$. Since $a + b$ is even, we may write $a - b = 2\ell$, for some nonzero $\ell \in \Z$. Assume without loss of generality that $k > 0$ (by replacing $a$ and~$b$ with their negatives if necessary). Letting $k = \ell$, we have 
		\[ k + \ell = 2\ell = a - b = |\Z : \langle a - b \rangle| \]
		and
		\[ \langle k a + \ell b \rangle
		= \langle \ell a + k b \rangle
		= \langle \ell (a + b) \rangle
		= \langle 2 \ell \rangle
		= \langle a - b \rangle , \]
		so \cref{InfiniteADgeneral} tells us that $\cayd(G; a, b)$ has two arc-disjoint two-way infinite hamiltonian paths.
		
		\medbreak
		
		(\ref{infiniteAD-ZxZm-xy} $\Leftarrow$) Let $\overline{\phantom{x}} \colon \Z \times \Z_m \to \Z$ be the natural projection, and assume, without loss of generality, that $a = (1, x)$ and $b = (-1, y)$. Then $\overline {a - b} = 2$, so $|G \colon \langle a - b \rangle| = 2m$. Therefore, if we let $k = m + 1$ and $\ell = m - 1$, then
		\[ k + \ell 
		= 2m 
		= |G : \langle a - b \rangle | \]
		and
		\[ \overline{k a + \ell b}
		= k \overline a + \ell \overline b
		= (m + 1) \cdot 1 + (m - 1) \cdot (-1)
		= 2 
		, \]
		so 
		\[ |G : \langle k a + \ell b \rangle|
		= 2m
		= |G : \langle a - b \rangle| , \]
		so 
		\[ \langle k a + \ell b \rangle = \langle a - b \rangle . \]
		A similar calculation shows $\langle \ell a + k b \rangle = \langle a - b \rangle$. So \cref{InfiniteADgeneral} tells us that $\cayd(G; a, b)$ has two arc-disjoint two-way infinite hamiltonian paths.
		
		\medbreak
		
		(\ref{infiniteAD-ZxZm-2} $\Leftarrow$) 
		Since $a - b$ is of the form $(\pm 1, *)$, we have $I = 2$. Let $k = \ell = 1$. Then $k + \ell = I$, and, since $a = -a$, we have
		\[ \langle a - b \rangle
		= \langle a + b \rangle 
		= \langle k a + \ell b \rangle
		= \langle \ell a + k b \rangle . \]
		So \cref{InfiniteADgeneral} tells us that $\cayd(G; a, b)$ has two arc-disjoint two-way infinite hamiltonian paths.
		
		\medbreak
		
		(\ref{infiniteAD-ab} $\Rightarrow$) The structure theorem for finitely generated abelian groups \cite[4.2.10]{robinson2012course} tells us that 
		\[ G \cong \Z^r \times \Z_{n_1} \times \Z_{n_2} \times \cdots \times \Z_{n_s}, \]
		where $n_{i+1}$ is a multiple of $n_i$ for $1 \le i < s$ (and $n_i \ge 2$ for all~$i$).
		
		Since $G$ is infinite, we have $r \ge 1$. On the other hand, we know from \fullcref{InfiniteHam}{assumeHP} that $|G : \langle a - b \rangle| < \infty$, so $G$ has a cyclic subgroup of finite index, so $r \le 1$. We conclude that $r = 1$.
		
		The minimum cardinality of a generating set of~$G$ is $r + s$. Since $G = \langle a , b \rangle$, this implies $r + s \le 2$. Since $r = 1$, we conclude that $s \in \{0,1\}$. If $s = 0$, then $G \cong \Z$. If $s = 1$, then we may let $m = n_1$.
		
		\medbreak
		
		(\ref{infiniteAD-Z}~$\Rightarrow$, \ref{infiniteAD-ZxZm}~$\Rightarrow$)
		Let $G = \Z \times \Z_m$ for (where $m = 1$ if the group is~$\Z$), let $\overline{\phantom{x}} \colon \Z \times \Z_m \to \Z$ be the natural projection, and let $\overline I$ be the absolute value of $\overline a - \overline b$, so $\langle \overline{a} - \overline{b} \rangle = \langle \overline I \rangle$.
		
		If $\overline I = 0$, then $|G : \langle a - b \rangle| = \infty$, which contradicts the conclusion of \fullcref{InfiniteHam}{index}, so we must have $a = b$. Then $G = \langle a, b \rangle = \langle a \rangle \cong \Z$ and $a = b = \pm 1$, so $a + b = \pm 2$. This implies that $a$ and~$b$ have the same parity. They cannot both be even (since $\langle a, b \rangle = \Z$), so $a$ and~$b$ are odd. Thus, the situation is described in part~(\ref{infiniteAD-Z}) of the statement of the \lcnamecref{infiniteAD}.
		
		We may now assume $\overline I > 0$. 
		Then we claim that $|G : \langle a - b \rangle| = m \, \overline I$. 
		We have
		\[ | G : \langle a - b \rangle |
		= \left| \frac{G}{\langle a - b \rangle} \right|
		= \left| \frac{G}{\langle a - b \rangle + \Z_m} \right| \cdot \left| \frac{\langle a - b \rangle + \Z_m}{\langle a - b \rangle} \right|
		. \]
		Also note that
		\[ \frac{G}{\langle a - b \rangle + \Z_m}
		\cong \frac{G / \Z_m}{\bigl( \langle a - b \rangle + \Z_m \bigr) / \Z_m}
		\cong \frac{\Z}{\langle \overline a - \overline b \rangle}
		= \frac{\Z}{\langle \overline I \rangle} \]
		has order~$\overline I$, and
		\[ \frac{\langle a - b \rangle + \Z_m}{\langle a - b \rangle}
		\cong \frac{\Z_m}{\langle a - b \rangle \cap \Z_m}
		= \frac{\Z_m}{\{0\}}
		\cong \Z_m \]
		has order~$m$. 
		Therefore the claim is proved.
		By \cref{InfiniteADgeneral}, there exist $k, \ell \ge 0$, such that $k + \ell = m \, \overline I$, and 
		\[ \langle k a + \ell b \rangle
		= \langle \ell a + k b \rangle
		= \langle a - b \rangle . \]
		Therefore
		\[ \langle k \overline a + \ell \overline b \rangle
		= \langle \ell \overline a + k \overline b \rangle
		= \langle \overline a - \overline b \rangle 
		= \langle \overline I \rangle, \]
		so we may assume
		\[ \text{$k \overline a + \ell \overline b = \overline I$ and $\ell \overline a + k \overline b = \pm \overline I$} . \]
		Adding these two equations, we conclude that 
		\[ \text{$(k + \ell)(\overline a + \overline b)$ is either $0$ or~$2 \overline I$.} \]
		Since $k + \ell = m \overline I$, this implies
		\[ m(\overline a + \overline b) \in \{0, 2\} . \]
		
		\refstepcounter{startcases}
		\begin{case}
			Assume $m(\overline a + \overline b) = 0$.
		\end{case}
		This means $\overline a = - \overline b$. Since $\gcd(\overline a, \overline b) = 1$, this implies $\overline a = \pm 1$ and $\overline b = \pm 1$ (so $\overline I = 2$). If $G = \Z$, then the situation is described in part~(\ref{infiniteAD-Z}) of the statement of the \lcnamecref{infiniteAD}. 
		
		Therefore, we may assume $G \not\cong \Z$, so $m > 1$. Write $a = (1, x)$ and $b = (-1, y)$ (perhaps after interchanging $a$ and~$b$). Since 
		\[ G 
		= \langle a, b \rangle
		= \langle a, a + b \rangle
		= \langle (1, x), (0, x + y) \rangle
		,\]
		it is clear that $\langle x + y \rangle = \Z_m$. So we are in the situation specified by part~(\ref{infiniteAD-ZxZm-xy}) of the statement of the \lcnamecref{infiniteAD}. 
		
		\begin{case}
			Let $m(\overline a + \overline b) = 2$.
		\end{case}
		This immediately implies that $m \in \{1,2\}$. 
		
		If $m = 1$ (and $\overline a + \overline b = 2$), then $G = \Z$, and we have $a + b = \overline a + \overline b = 2$. (This implies that $a$ and~$b$ have the same parity. They cannot both be even, since $\langle a, b \rangle = G$, so $a$ and~$b$ must be odd.) So we are in a situation that is described in part~(\ref{infiniteAD-Z}) of the statement of the \lcnamecref{infiniteAD}. 
		
		Assume now that $m = 2$ (and $\overline I = 1$). Then $\overline a + \overline b = 1$. Note that $\langle 2 \overline a \rangle \neq \Z$ and $\langle 2 \overline b \rangle \neq \Z$. Since $\langle k \overline a + \ell \overline b \rangle = \Z$ and $k + \ell = m \, \overline I = 2$, we conclude that $k = \ell = 1$. This implies 
		\[ \overline a + \overline b = k \overline a + \ell \overline b  = \overline I = 1 . \]
		Since we also have $\overline a - \overline b = \pm \overline I = \pm 1$, we conclude that $\{\overline a, \overline b\} = \{0, 1\}$. So we are in the situation that is described in part~(\ref{infiniteAD-ZxZm-2}) of the statement of the \lcnamecref{infiniteAD}. 
	\end{proof}
	
	\begin{cor} \label{ZmxZ}
		$\cayd(\Z \times \Z_m; e_1, e_2)$ has two arc-disjoint two-way infinite hamiltonian paths if and only if $m = 2$.
	\end{cor}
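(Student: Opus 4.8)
The plan is to apply the explicit classification in \fullcref{infiniteAD}{ZxZm} directly to the standard generators $a = e_1 = (1,0)$ and $b = e_2 = (0,1)$ of $\Z \times \Z_m$. That result states that $\cayd(\Z \times \Z_m; a, b)$ has two arc-disjoint two-way infinite hamiltonian paths precisely when one of its two conditions (the one labelled \ref{infiniteAD-ZxZm-xy} or the one labelled \ref{infiniteAD-ZxZm-2}) holds, so it suffices to test each condition against these particular generators and see for which~$m$ it can be satisfied.

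First I would rule out condition~\ref{infiniteAD-ZxZm-xy}, which requires $\{e_1, e_2\} = \{(1,x), (-1,y)\}$ for some $x,y \in \Z_m$. The key point is that the first coordinate lives in~$\Z$, not in~$\Z_m$: the generator $e_2 = (0,1)$ has first coordinate~$0$, whereas every element of the form $(1,x)$ or $(-1,y)$ has first coordinate $\pm 1$, and $0 \neq \pm 1$ in~$\Z$. Hence these two unordered pairs can never coincide, so condition~\ref{infiniteAD-ZxZm-xy} fails for all values of~$m$.

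Next I would analyze condition~\ref{infiniteAD-ZxZm-2}, which requires $m = 2$ together with $a = (0,1)$ and $b \in \{\pm 1\} \times \Z_2$, after possibly interchanging $a$ and~$b$. Interchanging so that $a = e_2 = (0,1)$ and $b = e_1 = (1,0)$, I would check that $b = (1,0) \in \{\pm 1\} \times \Z_2$, so condition~\ref{infiniteAD-ZxZm-2} is satisfied exactly when $m = 2$. Combining the two steps, condition~\ref{infiniteAD-ZxZm-xy} never holds while condition~\ref{infiniteAD-ZxZm-2} holds if and only if $m = 2$; therefore \fullcref{infiniteAD}{ZxZm} produces two arc-disjoint two-way infinite hamiltonian paths if and only if $m = 2$, which is the claim.

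There is no serious obstacle here, since this is a direct specialization of an already-proved classification. The only subtlety I would be careful about—and the single step on which the whole argument turns—is remembering that the first coordinate of $\Z \times \Z_m$ is a genuine integer, which is exactly what prevents the standard generator $e_2 = (0,1)$ from ever matching the form $(\pm 1, *)$ demanded by condition~\ref{infiniteAD-ZxZm-xy}.
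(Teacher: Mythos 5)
Your proof is correct and takes essentially the same approach as the paper: both directions reduce to the classification in \fullcref{infiniteAD}{ZxZm}, with the key observation that $e_2=(0,1)$ can never have the form $(\pm1,*)$ required by condition~(\ref{infiniteAD-ZxZm-xy}). The only (immaterial) difference is that for the ``if'' direction the paper re-verifies existence directly from \cref{InfiniteADgeneral} with $k=\ell=1$, whereas you simply invoke the already-established sufficiency of condition~(\ref{infiniteAD-ZxZm-2}).
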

	
	\begin{proof}
		($\Leftarrow$) Let $k = \ell = 1$. Then 
		\[ k + \ell = 2 = |\Z \times \Z_2 : \langle e_1 - e_2 \rangle| \]
		and
		\[ \langle ke_1 + \ell e_2 \rangle
		= \langle \ell e_1 + k e_2 \rangle
		= \langle e_1 + e_2 \rangle
		= \langle e_1 - e_2 \rangle
		, \]
		so \cref{InfiniteADgeneral} provides two arc-disjoint two-way infinite hamiltonian paths.
		
		($\Rightarrow$) Since $e_2 = (0,1)$ is obviously not of the form $(\pm1, *)$, it is clear that the generating set $\{e_1, e_2\}$ is not of the form specified in \fullcref{infiniteAD}{ZxZm-xy}, so it must be part~(\ref{infiniteAD-ZxZm-2}) of the \lcnamecref{infiniteAD} that applies. So $m = 2$.
	\end{proof}

	\bibliographystyle{DarijaniMiraftabMorris}
	\bibliography{references.bib}
	
\end{document}